\newtheorem{rem}{Remark}
\newtheorem{lem}{Lemma}[section]
\newtheorem{pro}{Proposition}
\newtheorem{defi}{Definition}[section]
\newtheorem{theo}{Theorem}
\newtheorem{ass}{Assumption}
\theoremstyle{definition} 
\theoremstyle{definition} 
\renewcommand{\P}{\mathbb{P}}
\newcommand{\R}{\mathbb{R}}
\newcommand{\E}{\mathbb{E}}
\newcommand{\N}{\mathbb{N}}
\newcommand{\Z}{\mathbb{Z}}
\newcommand{\eps}{\varepsilon}
\DeclareMathOperator{\var}{Var}
\DeclareMathOperator{\cov}{Cov}
\numberwithin{equation}{section}
\newtheorem{ex}{Example}
\begin{document}

\title[Genealogies of two neutral loci after a selective sweep]
{Genealogies of two linked neutral loci after a selective sweep in a large population of stochastically varying size}

\author{Rebekka Brink-Spalink}
\address{Institute for Mathematical Stochastics,
Georg-August-Universit\"at G\"ottingen,
Goldschmidtstr. 7,
37077 G\"ottingen, Germany}
\email{rbrink@math.uni-goettingen.de}

\author{Charline Smadi}
\address{Irstea, UR LISC, Laboratoire d'Ingénierie des Systèmes Complexes, 9 avenue Blaise Pascal-CS 20085, 63178 Aubière, France and
Department of Statistics, University of Oxford, 1 South Parks Road, Oxford OX1 3TG, UK}
\email{charline.smadi@polytechnique.edu}

\keywords{ Birth and death process;
    Coalescent;
    Coupling;
    Eco-evolution;
    Genetic hitch-hiking;
    Selective sweep}
\subjclass[2010]{92D25, 60J80, 60J27, 92D15, 60F15.}

\maketitle

\begin{abstract}
We study the impact of a hard selective sweep on the genealogy of partially linked neutral loci in the vicinity of the positively
selected allele. We consider a sexual population of stochastically
varying size and, focusing on two neighboring loci, derive an approximate formula for the neutral genealogy of a sample of individuals
taken at the end of the sweep. 
Individuals are characterized by ecological parameters depending on their genetic type and governing their growth rate and interactions 
with other individuals (competition). As a consequence, the "fitness" of an individual depends on the population state and is not 
an intrinsic characteristic of individuals.
We provide a
deep insight into the dynamics of the mutant and wild type populations during the different stages of a selective sweep.
\end{abstract}

 \section*{Introduction}
We study the hitchhiking effect of a beneficial mutation in a sexual haploid population of stochastically varying size. We assume
that a mutation occurs in one individual of a monomorphic population
 and that individuals carrying the new allele $a$ are better adapted to the current environment and spread in the population.
We suppose that the mutant allele $a$ eventually replaces the resident one, $A$, and
study the influence of this fixation on the neutral gene genealogy of a sample taken
at the end of the selective sweep. That is, in each sampled individual we consider the same set of partially
linked loci including the locus where the advantageous mutation occurred. We then trace back the ancestral
lineages of all loci in the sample until the beginning of the sweep and update the genetic relationships whenever a
coalescence or a recombination changes the ancestry of one or several loci.
Our main result is the derivation of a sampling formula for the ancestral partition of two neutral loci situated in the vicinity of
the selected allele.

The first studies of hitchhiking, initiated by Maynard Smith and Haigh \cite{smith1974hitch}, have modeled the mutant population size as
the solution of a deterministic logistic equation \cite{ohta1975effect,kaplan1989hitchhiking,stephan1992effect,
stephan2006hitchhiking}. Barton \cite{barton1998effect}
 was the first to point out the importance of the stochasticity of the mutant population size.
Following this paper, a series of works took into account this randomness during the sweep.
In \cite{durrett2004approximating,schweinsberg2005random} Schweinsberg and Durrett based their analysis on a Moran model with selection and
recombination, while Etheridge and coauthors \cite{etheridge2006approximate} worked with the diffusion limit of such discrete population models.
Then Brink-Spalink \cite{brink2014multsweep}, Pfaffelhuber and Studeny \cite{pfaffelhuber2007approximating} and Leocard
\cite{stephanie2009selective} extended the respective findings of these two approaches for the ancestry
of one neutral locus to the two-locus (resp. multiple-locus) case.

However, in all these models, the population size was constant and each individual had a ``fitness'' only dependent
on its type and not on {the population state}.
The fundamental idea of Darwin is that the individual traits have an influence on the interactions between individuals,
which in turn generate selection on the different traits.
In this paper we aim at modeling precisely these interactions by extending the model introduced in \cite{smadi2014eco} where the author considered only one neutral locus.
Such an eco-evolutionary approach has been introduced by Metz and coauthors \cite{metz1996adaptive} and has been made rigorous in
the seminal paper of Fournier and M\'el\'eard \cite{fournier2004microscopic}. Then it was further developed by
Champagnat, M\'el\'eard and coauthors (see
\cite{champagnat2006microscopic,
champagnat2011polymorphic,champagnat2014adaptation} and references therein) for
the haploid asexual case and by Collet, M\'el\'eard and Metz \cite{collet2011rigorous} and Coron and coauthors 
\cite{coron2013slow} for the diploid sexual case.

The population dynamics, described in Section \ref{modelandresults}, is a multitype birth and death Markov process
with competition.
We represent the carrying capacity of the underlying environment by a scaling parameter $K\in \N$ and state results in the limit for
large $K$. In \cite{champagnat2006microscopic} it was shown that such kind of invasion processes can be divided into three phases
(see Figure \ref{fig3loci}): an
 initial phase in which the fraction of $a$-individuals does not exceed a fixed value $\eps>0$ and where the dynamics of the wild type
 population is nearly undisturbed by the invading type. A second phase where both types account for a non-negligible percentage of the
 population and where the dynamics of the population can be well approximated by a deterministic competitive Lotka-Volterra system.
 And finally a third phase where the roles of the types are interchanged and the wild type population is near extinction.
The durations of the first and third phases of the selective sweep are of order $\log K $ whereas the second phase only
lasts an amount of time of order 1. This three phases decomposition is commonly encountered in population genetics models
and dates back to \cite{ohta1975effect}.

In Section \ref{sectioncoupling} we precisely describe these three phases and introduce two couplings of the population process, key tools to study the
dynamics of the $A$- and $a$-populations.
Section \ref{sectionproofmain} is devoted to the proofs of the main theorems on the ancestral partition of the two neutral alleles.
Sections \ref{technicalsection} to \ref{secondphase} are dedicated to the proofs of auxiliary statements.
In Section \ref{sectioncomparison} we compare our findings with previous results.
Finally, we state technical results needed in the proofs in the Appendix.

\section{Model and results} \label{modelandresults}

We consider a three locus model: one locus under selection, $SL$, with alleles in
$\mathcal{A}:=\{A,a\}$ and two neighboring neutral loci $N1$ and $N2$ with alleles in the finite sets $\mathcal{B}$ and
$\mathcal{C}$ respectively. We denote by $\mathcal{E}=\mathcal{A}\times \mathcal{B} \times \mathcal{C}$ the type space.
Two geometric alignments are possible: either the two neutral loci are adjacent (geometry $SL-N1-N2$), or they are separated by the
selected locus (geometry $N1-SL-N2$).
We introduce the model and notations for the adjacent geometry, their analogs for the separated one can be deduced
in a straightforward manner.

Whenever a reproduction event takes place, recombinations between $SL$ and $N1$ or between $N1$ and $N2$
occur independently with probabilities $r_{1}$ and $r_{2}$, respectively. These probabilities depend on the parameter $K$,
representing the environment's carrying capacity, but for the purpose of readability we do not indicate this dependence. We assume a regime of weak recombination:
  \begin{align}\label{assrK}
 \limsup_{K \rightarrow \infty}\ r_{j} \log K < \infty, \ j=1,2.
\end{align}
This is motivated by Theorem 2 in \cite{smadi2014eco} which states that
this is the good scale to observe a signature on the neutral allele distribution.
If the recombination
probabilities are larger (neutral loci more distant from the selected locus), there are many recombinations
and the sweep does not modify the neutral diversity at these sites.
Recombinations may lead to a mixing of the parental genetic material in the newborn, and hence, parents with types
$\alpha \beta \gamma$ and $\alpha' \beta' \gamma'$ in $\mathcal{E}$ can generate the following offspring:
\begin{align*}
\renewcommand{\arraystretch}{1.4}{
\begin{array}{ccc}
 \text{possible genotype} & \text{event} & \text{{probability}}\\
 \alpha \beta \gamma,\alpha' \beta' \gamma' & \text{no recombination } & (1-r_{1})(1-r_{2})\\
\alpha \beta' \gamma',\alpha' \beta \gamma & \text{one recombination {between $SL$ and $N1$}} &  r_{1}(1-r_{2})\\
 \alpha \beta \gamma',\alpha' \beta' \gamma & \text{one recombination {between $N1$ and $N2$}} & (1-r_{1}) r_{2}\\
\alpha \beta' \gamma,\alpha' \beta \gamma' &  \text{two recombinations} & r_{1} r_{2}
\end{array}}
\end{align*}
We will see in the sequel that the probability to witness a birth event with two simultaneous recombinations in the neutral genealogy of a uniformly
chosen individual is very small.

As we assume the loci $N1$ and $N2$ to be neutral,
the ecological parameters of an individual
only depend on the allele $\alpha$ at the locus under selection.
Let us denote by $f_\alpha$ the fertility of an individual with type $\alpha$.
In the spirit of \cite{collet2011rigorous}, such an individual gives birth at rate $f_\alpha$ (female role), and has a probability
proportional to $f_\alpha$ to be chosen as the father in a given birth event (male role).
Denoting the complementary type of the
allele $\alpha$
by $\bar{\alpha}$ we get the following result for the birth rate of individuals of
type $\alpha \beta\gamma \in \mathcal{E}$:
\begin{multline}\label{birthrate}
 b^{K}_{\alpha \beta \gamma }(n) = (1-r_1)(1-r_2) f_{\alpha}n_{\alpha \beta \gamma} + r_1(1-r_2)f_{\alpha}n_{\alpha}\frac{f_{\alpha}n_{\alpha \beta \gamma} +
 f_{\bar{\alpha}}n_{\bar{\alpha}\beta \gamma}}{f_{a}n_a+f_An_A} +
\\(1-r_1)r_2f_{\alpha} \frac{\sum_{(\beta',\gamma')\in (\mathcal{B},\mathcal{C})}n_{\alpha \beta \gamma'}( f_{\alpha}n_{\alpha \beta' \gamma}+ f_{\bar{\alpha}}n_{\bar{\alpha}\beta' \gamma})}{f_{a}n_a+f_An_A} 
+r_1r_2 f_{\alpha}
\frac{\sum_{(\beta',\gamma')\in (\mathcal{B},\mathcal{C})}n_{\alpha \beta' \gamma }( f_{\alpha}n_{\alpha \beta\gamma' }+ f_{\bar{\alpha}}n_{\bar{\alpha}\beta \gamma'})}{f_{a}n_a+f_An_A},
\end{multline}
where $n_{\alpha \beta \gamma}$ (resp. $n_\alpha$) denotes the current number of $\alpha \beta \gamma$-individuals
(resp. $\alpha$-individuals) and
$n=(n_{\alpha \beta \gamma}, (\alpha, \beta, \gamma) \in \mathcal{E})$
is the current state of the population.
An $\alpha$-individual can die either from a natural death (rate $D_\alpha$), or from type-dependent competition:
 the parameter $C_{\alpha,\alpha'}$ models the impact an
individual of type $\alpha'$ has on an individual of type $\alpha$, where $(\alpha,\alpha') \in \mathcal{A}^2$.
The strength of the competition also depends on
the carrying capacity $K$.
This
results in the total death rate of individuals carrying the alleles $\alpha \beta \gamma \in \mathcal{E}$:
\begin{align}\label{deathrate}
 d^{K}_{\alpha \beta \gamma }(n) =\; & \left( D_{\alpha} +  \frac{C_{\alpha,A}}{K}n_{A} + \frac{C_{\alpha,a}}{K} n_{a} \right) n_{\alpha \beta \gamma}.
\end{align}
Hence the population process
\begin{align*}
N^K= (N^{K}(t), t \geq 0)=\Big((N_{\alpha \beta \gamma}^{K}(t))_{(\alpha, \beta, \gamma)\in \mathcal{E}}, t \geq 0\Big),
\end{align*}
where $N_{\alpha \beta \gamma}^{K}(t)$ denotes the number of $\alpha \beta \gamma$-individuals
at time $t$, is a multitype birth and death process with rates given in \eqref{birthrate} and \eqref{deathrate}.
We will often work with the trait population process $( (N_A^K(t),N_a^K(t)),t \geq 0)$, where $N_{\alpha}^{K}(t)$ denotes the number of $\alpha$-individuals at time $t$.
This is also a birth and death process with birth
and death rates given by:
\begin{align}\label{def:totalbd}
 b^{K}_{\alpha }(n) &=\sum_{(\beta,\gamma) \in \mathcal{B}\times \mathcal{C}}  b^{K}_{\alpha \beta \gamma }(n) = f_{\alpha} n_{\alpha}
 \\
 d^{K}_{\alpha }(n) &=\sum_{(\beta,\gamma) \in \mathcal{B}\times \mathcal{C}}  d^{K}_{\alpha  \beta \gamma }(n) =\Big( D_{\alpha} + \frac{C_{\alpha,A}}{K} n_{A} +  \frac{C_{\alpha,a}}{K}n_{a}\Big) n_{\alpha}.\nonumber
\end{align}
As a quantity summarizing the advantage or disadvantage a mutant with allele type $\alpha$ has in an $\bar{\alpha}$-population at equilibrium, we introduce the so-called invasion fitness $S_{\alpha \bar{\alpha}}$ through
\begin{equation} \label{deffitinv3loci}
 S_{\alpha \bar{\alpha}} := f_{\alpha} -D_{\alpha} - C_{\alpha,\bar{\alpha}}\bar{n}_{\bar{\alpha}},
\end{equation}
where the equilibrium density $\bar{n}_{{\alpha}}$ is defined by
\begin{align}\label{defnbar}
\bar{n}_{{\alpha}}: =\frac{f_{\alpha} -D_{\alpha}}{C_{\alpha,{\alpha}}}.
\end{align}
The role of the invasion fitness $S_{\alpha \bar{\alpha}}$ and the
definition of the equilibrium density $\bar{n}_{{\alpha}}$ follow from the properties of the two-dimensional competitive Lotka-Volterra
system:
\begin{equation} \label{S}
 \dot{n}_\alpha^{(z)}=(f_\alpha-D_\alpha-C_{\alpha,A}n_A^{(z)}-C_{\alpha,a}n_a^{(z)})n_\alpha^{(z)},\quad
z \in \R_+^\mathcal{A}, \quad
n_\alpha^{(z)}(0)=z_\alpha,\quad  \alpha \in \mathcal{A}.
 \end{equation}
If we assume
\begin{equation}\label{defnbara}
 \bar{n}_{A}>0,\quad \bar{n}_{a}>0,\quad \text{and} \quad S_{Aa}<0<S_{aA},
\end{equation}
then $\bar{n}_{\alpha}$ is the equilibrium size of a monomorphic $\alpha$-population and
the system \eqref{S} has a unique stable equilibrium $(0,\bar{n}_a)$ and two unstable steady states $(\bar{n}_A,0)$ and $(0,0)$.
Thanks to Theorem 2.1 p. 456 in \cite{ethiermarkov} we can prove that
if $N_A^K(0)$ and $N_a^K(0)$ are of order $K$ and $K$ is large, the rescaled process $(N_A^K/K,N_a^K/K)$ is very close to the solution of
\eqref{S} during any finite time interval.
The invasion
fitness $S_{aA}$ corresponds to the \textit{per capita} initial growth rate of the mutant $a$ when it appears in a monomorphic
population of individuals $A$ at their equilibrium size $\bar{n}_AK$.
Hence the dynamics of the allele $a$ is very dependent on the properties of the system \eqref{S} and it is proven in
\cite{champagnat2006microscopic}
that under Condition \eqref{defnbara} one mutant $a$ has a positive probability to fix in the population and replace a wild type $A$.
More precisely, if we use the convention
\begin{equation}\label{convP} \P^{(K)}(.):=\P(.|N_A^K(0)=\lfloor \bar{n}_A K \rfloor,N_a^K(0)=1), \end{equation}
Equation (39) in \cite{champagnat2006microscopic} states that
\begin{equation}\label{probafix}
\lim_{K \to \infty}\P^{(K)}(\text{Fix}^K)=\frac{S_{aA}}{f_a}=:s,
\end{equation}
where $s$ is called the rescaled invasion fitness, and the extinction time of the $A$-population and the event of fixation of the $a$-allele are rigorously defined as follows:
\begin{align}\label{def:extfix}
  T_{\text{ext}}^K:=\inf \big\{t \geq 0 : N_{A}^{K}(t) = 0 \big\}\quad \text{and}\quad \text{Fix}^K:=\big\{T_{\text{ext}}^K<\infty, N_{a}^{K}(T_{\text{ext}}^K)>0\big\}.
\end{align}
From this point onward, we fix $d$ in $\N$.
We aim at quantifying the effect of the selective sweep on the neutral diversity. Our method consists in tracing back the neutral genealogies
of $d$ individuals sampled uniformly at the end of the sweep (time $T_{\text{ext}}^K$) until time $0$.
Two event types (see Definition \ref{defcoalreco}) may affect the relationships of the sampled neutral alleles: coalescences correspond to the merging of
the neutral genealogies of two individuals
at one or two neutral loci, and recombinations redistribute the selected and neutral alleles of one individual into two groups carried by its two parents.
We will represent the neutral genealogies by a partition $\Theta^K_d$ which belongs to the set $\mathcal{P}_d^*$ of marked partitions of
$\{ (i,k), i \in \{1,...,d\}, k \in \{1,2\} \}$ with (at most) one
block distinguished by the mark $*$, which will correspond to the descendants of the original mutant $a$.
In this notation $(i,1)$
and $(i,2)$ are the neutral alleles at loci $N1$ and $N2$ of the $i$th sampled individual.
Let us define rigorously the random partition $\Theta^K_d$:

\begin{defi}\label{deftheta}
Sample $d$ individuals uniformly and without replacement at the end of the sweep (time $T_{\text{ext}}^K$). Follow the genealogies of the first and second neutral
alleles of the $i$-th sampled individual, $(i,1)$ and $(i,2)$ for $i \in \{1, ...,d\}$.
Then the partition $\Theta^K_d \in \mathcal{P}_d^*$ is defined as follows:
each block of the partition $\Theta^K_d$ is composed of all those neutral alleles which originate from the same individual alive at the beginning of the sweep;
the block containing the descendants of the mutant $a$ (if such a block exists) is distinguished by the mark $*$.
\end{defi}

We will show in Theorems \ref{mainresult} and \ref{mainresult2} that when $K$ is large the partition $\Theta^K_d$ belongs
with a probability close to one to a subset $\Delta_d$ of $\mathcal{P}_d^*$, which
is defined as follows:

\begin{defi}\label{defdelta}
$\Delta_d$ is the subset of $\mathcal{P}_d^*$ consisting of those partitions whose unmarked blocks (if there are any)
 are either singletons or pairs of the form $\{(i,1),(i,2)\}$ for one $i\in \{1,...,d\}$.
\end{defi}


\begin{ex}
In the example represented in Figure \ref{schemagen}, the marked partition $\pi^{(ex)}$ belongs to $\Delta_d$:
$$\pi^{(ex)}=\Big\{\{ (1,1), (1,2), (2,1), (5,2) \}^*,
\{(2,2)\},\{(3,1),(3,2)\},\{(4,1)\},\{(4,2)\},\{(5,1)\} \Big\}.$$
\end{ex}

\begin{figure}[h]
  \centering
 \includegraphics[width=11cm,height=4.5cm]{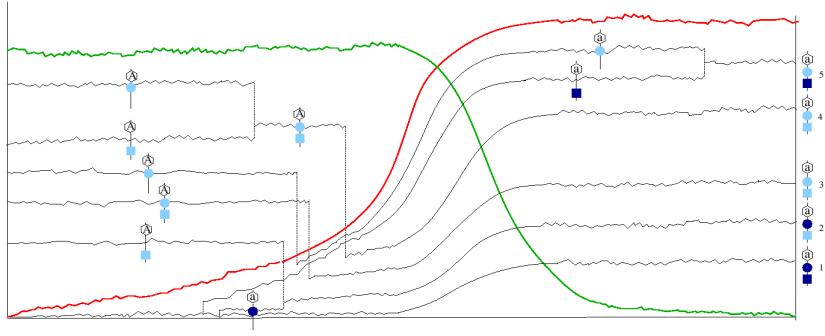}
\caption{Example of genealogy for a 5-sample:
dark blue neutral alleles originate from the mutant and light blue ones from an $A$-individual.
We indicate the selected allele, $A$ or $a$, associated with the neutral alleles during the sweep.
It can change when a recombination occurs.
Bold lines represent the $A$(green)- and $a$(red)-population sizes.
In this example,
 the two neutral alleles of the first individual, the first neutral allele of the second
individual and the second neutral allele of the fifth individual originate from the mutant;
the two neutral alleles of the third individual originate from the same $A$-individual, whereas the
two neutral alleles of the fourth individual originate from two distinct $A$-individuals. }
  \label{schemagen}
\end{figure}

For a partition $\pi \in \mathcal{P}_d^*$, we define for some possible ancestral relationships the number of individuals in the sample
whose two neutral loci are related in that particular way:

\begin{defi}\label{defpartition}
 Let $d\in \N$ and $\pi\in \mathcal{P}_d^*$. Then we set:
 \begin{enumerate}
  \item[] $|\pi|_1=\#\{$\small$1\leq i \leq d$  such that $(i,1)$ and $(i,2)$ belong to the marked block \normalsize  $\}$
  \item[] $|\pi|_2=\#\{$\small$ 1\leq i \leq d$ such that $(i,1)$ belongs to the marked block and $\{(i,2)\}$ is an unmarked block\normalsize  $\}$
  \item[] $|\pi|_3=\#\{$\small$1\leq i \leq d$ such that $(i,2)$ belongs to the marked block and $\{(i,1)\}$ is an unmarked block\normalsize  $\}$
  \item[] $|\pi|_4=\#\{$\small$1\leq i \leq d$ such that $\{(i,1),(i,2)\}$ is an unmarked block\normalsize  $\}$
  \item[] $|\pi|_5=\#\{$\small$1\leq i \leq d$ such that $\{(i,1)\}$ and $\{(i,2)\}$ are two distinct unmarked blocks\normalsize  $\}$
 \end{enumerate}
\end{defi}

To express the limit distribution of the partition $\Theta_d^{K}$ we need to introduce:
\begin{equation}
\label{defq1q2}
q_1:= e^{- \frac{f_ar_1\log K}{S_{aA}}},\ \ q_2:=e^{-\frac{f_ar_2\log K}{S_{aA}}}, \ \
\bar{q}_2:=e^{-\frac{f_ar_2\log K}{|S_{Aa}|}} \ \ \text{and} \ \
q_3:=\frac{r_1(q_2^{f_A/f_a}-q_1q_2)}{r_1+r_2(1-f_A/f_a)},
\end{equation}
where the invasion fitnesses have been defined in \eqref{deffitinv3loci}.
We did not make any assumption on
the sign of $f_a(r_1+r_2)-f_Ar_2$, but $q_3$ can be written in the form
$\delta(e^{-\mu}- e^{-\nu})/(\nu-\mu)$ for $(\delta,\mu,\nu) \in \R_+^3$
so that it is well defined and non-negative. It is easy to check that $q_3\leq 1$.
The forms of $q_1$, $q_2$ and $\bar{q}_2$ are intuitive (see comments of Proposition \ref{prop:1phase_probg1}).The form of $q_3$ is more complex to explain and results from 
a combination of different possible genealogical scenarios during the first phase.
We now define five non-negative numbers $(p_k, 1\leq k \leq 5)$
which will quantify the law of $\Theta_d^K$ for large $K$ in Theorem \ref{mainresult}:
\begin{eqnarray}\label{defpi} p_1 := q_1q_2 [1-(1-q_1)(1-
\bar{q}_2)], \quad  p_2 := q_1[(1-q_1q_2)
- q_2\bar{q}_2(1-q_1) ], \quad \\
 p_3 := q_1q_2(1-\bar{q}_2)(1-q_1), \quad
p_4 :=\bar{q}_2 q_3\quad \text{and} \quad  p_5 := (1-q_1)(1-q_1q_2(1-\bar{q}_2))
-\bar{q}_2q_3.\nonumber\end{eqnarray}

Note that $\sum_{1 \leq k \leq 5}p_k=1$. Finally, we introduce an assumption which summarizes all the assumptions made in this work:
\begin{ass}\label{asstotale}
$(N_A^K(0),N_a^K(0))=(\lfloor\bar{n}_AK\rfloor,1)$ and Conditions \eqref{assrK} on the recombination probability and \eqref{defnbara}
on the equilibrium densities and fitnesses hold.
\end{ass}
With Definitions  \ref{deftheta}, \ref{defdelta} and \ref{defpartition} in mind, we can now state our main results:

\begin{theo}[Geometry $SL-N1-N2$] \label{mainresult}
Under Assumption \ref{asstotale}, we have for every $ \pi \in \mathcal{P}_d^*$
$$ \underset{K \to \infty}{\lim} \
\Big| \P^{(K)}( \Theta^K_d=\pi |{\textnormal{Fix}}^K)-\mathbf{1}_{\{\pi \in \Delta_d\}}
{p_1}^{|\pi|_1}{p_2}^{|\pi|_2}{p_3}^{|\pi|_3}{p_4}^{|\pi|_4}{p_5}^{|\pi|_5} \Big|=0.$$
\end{theo}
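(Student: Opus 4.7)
I will use the three-phase decomposition of the sweep and the couplings of $N^K$ with auxiliary birth-and-death / Lotka--Volterra processes introduced in Section~\ref{sectioncoupling}. Conditionally on $\text{Fix}^K$, at time $T^K_{\text{ext}}$ all $d$ uniformly sampled individuals are of type $a$ up to an event of vanishing probability, so I follow the $2d$ ancestral lineages of their neutral loci backward in time through the third phase (duration $\sim\log K/|S_{Aa}|$), the second phase ($O(1)$), and the first phase (duration $\sim\log K/S_{aA}$). At every instant each lineage carries a label in $\{A,a\}$ recording the type of its current host; labels can flip at birth events via recombination and lineages sharing a label may coalesce.

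The proof splits into two parts. The first is a ``structure'' step showing that $\P^{(K)}(\Theta_d^K\in\Delta_d\mid \text{Fix}^K)\to 1$. The only way to violate $\Delta_d$ is for two lineages of distinct sampled individuals to end up in a common unmarked block, which requires them to coalesce either in the $A$-background or in the $a$-background without then merging into the initial mutant. In both backgrounds the relevant pool has size $\Theta(K)$ away from the bottleneck, so the pairwise coalescence rate is $O(1/K)$ and, integrated over $O(\log K)$, contributes $o(1)$; the bottleneck at the end of the first phase (backward) forces all lineages still residing on the $a$-background into the unique mutant ancestor, which by definition joins the marked block and is compatible with $\Delta_d$. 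The same dilution estimate shows that, with probability $1-o(1)$, the genealogical outcomes of the $d$ sampled individuals are determined by mutually independent families of recombination Poisson processes along disjoint lineage trajectories, so the joint law of $\Theta_d^K$ factorizes asymptotically into $d$ i.i.d.\ marginal outcomes, producing the multinomial product $p_1^{|\pi|_1}\cdots p_5^{|\pi|_5}$.

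The second and harder step is to compute the marginal law $(p_1,\ldots,p_5)$ for a single sampled individual. In the third phase, an $a$-lineage essentially cannot escape to the rare $A$-population by recombination (total probability $O(r_j)=o(1)$), but an $r_2$-recombination can separate $(i,1)$ from $(i,2)$ onto two distinct $a$-ancestors; the probability of no such separation over the third phase equals $\bar q_2=\exp(-f_ar_2\log K/|S_{Aa}|)$. The second phase contributes nothing of order one because $r_j=O(1/\log K)$ while the phase has length $O(1)$. In the first phase, the $a$-population is coupled with a supercritical linear birth-and-death process of rate $S_{aA}$ (Section~\ref{sectioncoupling}); along an $a$-lineage a standard Poisson approximation gives that the probability of no $r_j$-recombination equals $q_j$, so locus $j$ escapes to the $A$-background iff at least one such recombination occurs, and once escaped it is with high probability committed to an $A$-ancestor (not recaptured) because the $A$-pool dominates the parent-selection weights. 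The combinatorially delicate quantity is the probability $\bar q_2\,q_3$ that the two loci, still together entering the first phase, leave the $a$-background simultaneously onto the \emph{same} $A$-individual; this corresponds to a single $r_1$-recombination with no preceding $r_2$-recombination, integrated against the exponential growth profile of $N_a^K$, and evaluates to the stated form of $q_3$. Combining these ingredients by inclusion--exclusion on the events ``locus $1$ escapes during the first phase'', ``locus $2$ escapes during the first phase'', and ``both escape to the same $A$-ancestor'', further conditioned on the third-phase dichotomy $\{\bar q_2,1-\bar q_2\}$, yields $p_1,\ldots,p_5$. The main obstacle is precisely the joint computation leading to $q_3$: one must control the first-phase coupling error with the branching approximation, the Poisson approximation of the recombination marks on the two loci, and the time-reversal argument identifying the landing $A$-ancestor, uniformly over the $\log K$ duration of the phase and uniformly in the outcomes of the other sampled individuals' lineages.
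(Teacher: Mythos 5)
Your overall architecture matches the paper's: a three-phase decomposition with the couplings $\tilde N$, $\tilde{\tilde N}$, a reduction to $\Delta_d$ by showing coalescences in the $O(K)$-sized backgrounds are negligible, asymptotic independence across the $d$ individuals, and a per-individual computation of $(p_1,\dots,p_5)$ from first- and third-phase recombination probabilities. However, there is a genuine gap in the step you yourself flag as the hard one: the computation of $q_3$, i.e.\ of $\P^{(1)}(R12(i)^{(1)})$, the probability that both loci end up on the \emph{same} $A$-ancestor. You describe $q_3$ as ``a single $r_1$-recombination with no preceding $r_2$-recombination, integrated against the exponential growth profile of $N_a^K$.'' That recipe only accounts for the backward-in-time dynamics of the lineage \emph{while it sits in the $a$-background}; it yields
$$\sum_{l<\lfloor\eps K\rfloor}\frac{r_1}{s(l+1)}\,e^{-\frac{r_1+r_2}{s}\log\frac{\lfloor\eps K\rfloor}{l}}\ \approx\ \frac{r_1}{r_1+r_2}\bigl(1-q_1q_2\bigr),$$
which is not $q_3$. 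The missing ingredient is that after the pair $\{(i,1),(i,2)\}$ jointly recombines into the $A$-population at $a$-population size $l$, the two loci can still be split \emph{inside the $A$-population} by an $r_2$-recombination between two $A$-parents during the remainder of the first phase — this is the event $[12,2]^{rec}_{A,i}$, which occurs with probability of order one, not $o(1)$. Controlling it requires counting the births of $A$-individuals while $\tilde N_a$ grows from $l$ to $\lfloor\eps K\rfloor$ (the paper's Lemma \ref{lemespvarmathcalU}: about $f_A\bar n_A K\log(\eps K/l)/(sf_a)$ births, each carrying a within-$A$ recombination probability $\approx r_2/(\bar n_AK)$), which produces the extra factor $\exp(-\frac{f_A}{f_a}\frac{r_2}{s}\log l)$ inside the sum and hence the expression $q_3=r_1(q_2^{f_A/f_a}-q_1q_2)/(r_1+r_2(1-f_A/f_a))$ of \eqref{defq1q2}. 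Your remark that an escaped lineage is ``committed to an $A$-ancestor'' addresses recapture by the $a$-population, but not this within-$A$ splitting; omitting it gives the wrong $p_4$ and $p_5$ and, notably, erases the $f_A/f_a$ dependence that distinguishes this model from the constant-size Moran setting (Section \ref{sectioncomparison}). Relatedly, your event $[2,1]^{rec}_{A,i}$ (the loci separate \emph{before} escaping, via an $r_2$-recombination within the $a$-population followed by an $r_1$-recombination of $(i,1)$ landing on a different $A$-individual) must be analyzed separately from $[12,2]^{rec}_{A,i}$, as the paper does in Propositions \ref{lemma_21Arec} and \ref{lemma_122Arec}; your inclusion--exclusion on ``locus $1$ escapes'', ``locus $2$ escapes'', ``both escape to the same ancestor'' does not by itself resolve this case distinction.

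A secondary weakness: you justify the factorization over the $d$ individuals by asserting that their outcomes are ``determined by mutually independent families of recombination Poisson processes along disjoint lineage trajectories.'' The lineages are not independent, because they all see the same random trait process $(N_A,N_a)$; they are only exchangeable conditionally on it. The paper's route (Proposition \ref{proindi}) is to show that the relevant functionals of the trait process (sums of upcrossing numbers weighted by recombination probabilities) concentrate — via the covariance bounds \eqref{covUmlk} — so that the conditional escape probabilities $F_1$ are asymptotically deterministic, and then to invoke a de Finetti-type argument. Some version of this concentration step is needed; independence of the driving Poisson marks alone does not suffice.
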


Notice that when $K$ is large, $\Theta^K_d$ belongs to $\Delta_d$ with a probability close to one, and that
$$({p_1}^{|\pi|_1}{p_2}^{|\pi|_2}{p_3}^{|\pi|_3}{p_4}^{|\pi|_4}{p_5}^{|\pi|_5}, \pi \in \Delta_d)$$
 is a probability on
$\Delta_d$ (depending on $K$).
Moreover, this result implies that the $d$ sampled individuals have asymptotically independent neutral genealogies.
With high probability, the neutral alleles of a given sampled individual $i$ either originate from the first mutant $a$
and belong to the marked block,
 or escape the sweep and originate from an $A$ individual. In this case they belong to an unmarked block which is of the
 form $\{(i,1)\}$, $\{(i,2)\}$ or $\{(i,1),(i,2)\}$, according to Definition \ref{defpartition}.
As a consequence, if some neutral alleles of two distinct sampled individuals escape the sweep,
they originate from
distinct $A$-individuals with high probability.
However, the genealogies of the two neutral alleles of a given individual are not independent.
For example the probability that $(i,1)$ and $(i,2)$ escape the sweep is $p_4+p_5$; the probability that
$(i,1)$ (resp. $(i,2)$) escapes the sweep is $p_3+p_4+p_5$ (resp. $p_2+p_4+p_5$), {and for every $K \in \N$ such that $r_1\neq 0$
$$ (p_3+p_4+p_5)(p_2+p_4+p_5)=(1-q_1)(1-q_1q_2)<(1-q_1)(1-q_1q_2+q_1q_2\bar{q}_2)=p_4+p_5. $$
 This is due to the fact that if {(backwards in time)} a recombination first occurs between $SL$ and $N1$, the neutral allele at $N2$, linked to $N1$, also escapes the sweep.
As the term $q_1q_2\bar{q}_2$ does not tend to $0$ when $K$ goes to infinity under Condition \eqref{assrK}, the only possibility to have an equality in the limit is the case where
$r_1 \log K\ll 1$ or in other words when the probability to see a recombination between $SL$ and $N1$ is negligible.\\

Let us now consider the separated geometry, $N1-SL-N2$:

\begin{theo}[Geometry $N1-SL-N2$] \label{mainresult2}
Under Assumption \ref{asstotale}, we have for every $ \pi \in \mathcal{P}_d^*$
$$
 \underset{K \to \infty}{\lim} \ \Big| \P^{(K)}( \Theta^K_d=\pi |\textnormal{Fix}^K)-
\mathbf{1}_{\{\pi \in \Delta_d\}}{[q_1q_2]}^{|\pi|_1}{[q_1(1-q_2)]}^{|\pi|_2}{[(1-q_1)q_2]}^{|\pi|_3}{[(1-q_1)(1-q_2)]}^{|\pi|_5}
\Big|=0. $$
\end{theo}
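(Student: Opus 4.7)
The plan is to mirror the proof of Theorem \ref{mainresult} (which the paper develops in Section \ref{sectionproofmain}), using the same three-phase decomposition of the sweep and the same couplings introduced in Section \ref{sectioncoupling}, but exploiting the crucial simplification that in the geometry $N1-SL-N2$ a recombination between $SL$ and $N1$ relocates only the $N1$-allele of the newborn to the other parental genome, leaving $N2$ still linked to $SL$ (and symmetrically for $SL$-$N2$ recombinations). Consequently, along each ancestral lineage the $N1$- and $N2$-histories are driven by two independent marked Poisson-type thinnings of the same birth events, and no mechanism produces a ``simultaneous transport'' of both neutral alleles onto the same $A$-background. This structural fact is exactly what eliminates the $p_4$ contribution and replaces $q_3$ by the product $q_1q_2$.

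Concretely, I would fix a sampled individual $i$ and run the backward genealogy of $(i,1)$ and $(i,2)$ through the three phases. In the third phase (from time $T_{\mathrm{ext}}^K$ back to the end of the deterministic phase), the $A$-population is negligible, so with probability $1-o(1)$ no event of interest occurs on the lineages, exactly as in the adjacent geometry. In the second, Lotka--Volterra phase, couplings from Section \ref{sectioncoupling} bound the number of recombination events on the sampled lineages by $o(1)$, so both lineages stay on the $a$-background with high probability. The core of the proof is therefore the first phase, where the $a$-population is small and well approximated by a branching process. Here, following the same calculation that yields the factor $q_1$ in Proposition \ref{prop:1phase_probg1}, the probability that the $N1$-lineage of individual $i$ remains on the $a$-background throughout phase 1 (and hence descends from the initial mutant) converges to $q_1=e^{-f_ar_1\log K/S_{aA}}$; by the analogous Haldane--type estimate for the $SL$-$N2$ recombinations, the $N2$-lineage escapes the sweep independently with asymptotic probability $1-q_2$. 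Because in the separated geometry the two thinning processes use disjoint sets of independent recombination marks, the asymptotic probabilities factor, giving $q_1q_2$, $q_1(1-q_2)$, $(1-q_1)q_2$ and $(1-q_1)(1-q_2)$ on the four elementary events.

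It then remains to check the two ``no accidental coalescence'' facts that put the limiting partition into $\Delta_d$ with no block of type $\{(i,1),(i,2)\}$. First, if both neutral loci of the same sampled individual $i$ escape the sweep, the two escaping recombinations occur at two distinct birth events during phase 1 (joint occurrence at a single birth has probability $r_1r_2=o(1/\log K)$, already shown to be negligible just after \eqref{birthrate}); conditionally on escape the two back-ancestors are then drawn, essentially independently and roughly uniformly, from an $A$-population of size $\Theta(K)$ at the times of the recombinations, so the probability that they coincide is $O(1/K)$. Second, for two distinct sampled individuals, the same argument applied to four recombination times shows that their $A$-ancestors differ with probability $1-o(1)$. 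Combining these estimates with the independence across the $d$ sampled individuals (again obtained from the branching-process approximation in phase 1, which makes distinct lineages asymptotically independent) yields the product formula of the theorem, and conditioning on $\mathrm{Fix}^K$ is handled exactly as in Theorem \ref{mainresult} via \eqref{probafix}.

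The main obstacle will be making the asymptotic independence of the $N1$- and $N2$-histories rigorous at the level of the whole sample: one must show that the dependencies created by the \emph{shared} underlying birth events on a given lineage (and by the shared $a$-population trajectory across the $d$ lineages) vanish in the limit, despite the recombination rates $r_1,r_2$ being only $O(1/\log K)$ rather than truly infinitesimal. This is where the couplings of Section \ref{sectioncoupling} and the branching-process estimates that already support Theorem \ref{mainresult} do the real work; once they are in place, the separated geometry is actually technically easier than the adjacent one, since the delicate combinatorial term $q_3$ never arises.
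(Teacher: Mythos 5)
Your proposal is essentially the argument the paper intends: the authors explicitly omit the proof of Theorem \ref{mainresult2}, saying only that it is simpler than and follows the same ideas as that of Theorem \ref{mainresult}, and your adaptation --- the same three-phase decomposition and couplings, each locus reduced to the one-locus first-phase computation with rates $r_1$ and $r_2$ respectively, factorization via the negligibility of double recombinations (event $2R$) and of coalescences involving $A$-individuals (event $CA$) from Lemma \ref{lemma_negevents}, and independence across the sample as in Proposition \ref{proindi} --- is exactly that argument, and it does yield the stated product formula with no $\{(i,1),(i,2)\}$ block. One point is misstated, though: your claim that in the third phase ``with probability $1-o(1)$ no event of interest occurs on the lineages, exactly as in the adjacent geometry'' is wrong on both counts. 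In the adjacent geometry the event $R2(i)^{(3,ga)}$ occurs with probability $1-\bar{q}_2\not\to 0$ and is precisely the source of the $\bar{q}_2$ factors in \eqref{defpi}; and in the separated geometry, within-$a$-population recombinations at either locus likewise occur during phase 3 with non-vanishing probability. What you actually need --- and what the exchangeability/independence part of your own argument supplies once you invoke it here --- is that such a phase-3 recombination merely relocates one neutral lineage onto another exchangeable $a$-individual whose phase-1 escape probability is unchanged, so that, after discarding the negligible events of Lemma \ref{lemma_negevents}, the third phase leaves the law of the partition untouched rather than being event-free. With that one step made explicit, the proof is complete and coincides with the paper's intended route.
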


Again the neutral genealogies of the $d$ sampled individuals are asymptotically independent.
Furthermore, we have independence between the neutral loci. Indeed Theorem \ref{mainresult2}
means that a neutral allele at locus $Nk$ escapes the sweep with probability $1-q_k$ independently of all other neutral alleles,
including the allele at the other neutral locus of the same individual.
This is due to the fact that in the separated geometry a recombination between $SL$ and one neutral locus has no impact on the genetic background
of the allele at the other neutral locus.
Note in particular that there is no block of the form $\{(i,1),(i,2)\}$ in the limit partition, as the two neutral alleles have a very small probability
to recombine at the same time.

\section{Comparison with previous work} \label{sectioncomparison}

In \cite{schweinsberg2005random} the authors gave an approximate sampling formula for the genealogy of one neutral locus
during a selective sweep. The population evolved as a two-locus modified Moran model
with recombination, selection, and in particular constant population size.
They introduced the fitness $s^{SD}$ of the mutant $a$ as follows: when one of the iid exponential clocks
of the living individuals rings, one picks two individuals uniformly at random (with replacement),
one dies, and the other one gives birth. A replacement of an $a$-individual by an
$A$-individual is rejected with probability $s^{SD}$. In this case, nothing happens.
In \cite{smadi2014eco}, the author studied the one neutral locus version of the here presented model.
It was shown that the ancestral relationships in a sample taken at the end of the sweep correspond to
the ones derived in \cite{schweinsberg2005random}
when we equal the fitness of \cite{schweinsberg2005random} and the rescaled invasion fitness
$s^{SD}=S_{aA}/f_a$ and when we have the equality $|S_{Aa}|/f_A=S_{aA}/f_a$ (in this case the first and third phases have the same duration,
$S_{aA}\log K/f_a$).\\

In \cite{brink2014multsweep}, the author generalized the model introduced in \cite{schweinsberg2005random} towards two neutral loci and used
similar methods to derive a corresponding statement for the genealogy of a sample taken at the end of the sweep.
If we however make the analogous comparison and try to
match our result for the adjacent geometry with the statement from \cite{brink2014multsweep},
we observe an interesting phenomenon:
the probabilities of the different types of ancestry only coincide if the birth rates of $a$- and $A$-individuals are the same,
that is, if $f_a=f_A$.
In biology, the fitness describes the ability to both survive and reproduce, and can be defined by the average contribution of an individual with a given genotype to the gene pool of
the next generation. Hence a mutation which affects the fitness of an individual
in a given environment can either act on the fertility ($f_\alpha$ in our model), or on the death rate,
intrinsic ($D_\alpha$) or by competition ($C_{\alpha,\alpha'}$), or on both.
Our result is comparable to that of \cite{schweinsberg2005random} if the mutation only affects the death rate
(and still if $s^{SD}=S_{aA}/f_a=|S_{Aa}|/f_A$).\\

In \cite{pfaffelhuber2007approximating}, instead of a birth and death process,
the authors modeled the population with a structured coalescent. It is shown that this process can be approximated by a marked Yule
tree where the different marks are realized by Poisson processes and indicate a recombination of one or two loci into the wild type
background. The impact of the third phase is taken into account by a certain refinement prior to the beginning of the coalescent which
leads to the same effect of splitting of the two neutral loci as it is seen here.
We again find similarities with our results when $f_A=f_a$.
In contrast, the techniques and precision used in \cite{pfaffelhuber2007approximating} yield that coalescent
events with $A$-individuals cannot be ignored, that is,
there are
neutral loci of different individuals from the sample which have the same type-$A$-ancestor. The structure of the sample is therefore
different from our results here.
Notice that it is also the case of the second approximate sampling formula stated in \cite{schweinsberg2005random}, which is more precise than
the first one.

\section{Dynamics of the sweep and couplings}
\label{sectioncoupling}

\subsection{Description of the three phases}\label{sectionP1P2P3}

We only need to focus on the trajectories of the population
process where the mutant allele $a$ goes to fixation and replaces the resident allele $A$. Champagnat has
described these trajectories in \cite{champagnat2006microscopic} and in particular divided the sweep into three phases
with distinct $A$- and $a$-population dynamics (see Figure \ref{fig3loci}).
In the sequel, $\eps$ will be a positive real number independent of $K$, as small as needed for the different approximations to hold.
Moreover, from this point onward we will write $N_\alpha$ (resp. $N_{\alpha\beta\gamma}$) instead of $N_\alpha^K$ (resp. $N^K_{\alpha\beta\gamma}$)
and $\P$ instead of $\P^{(K)}$ for the sake of readability.

\begin{figure}[h]
  \centering
 \includegraphics[width=10cm,height=4.5cm]{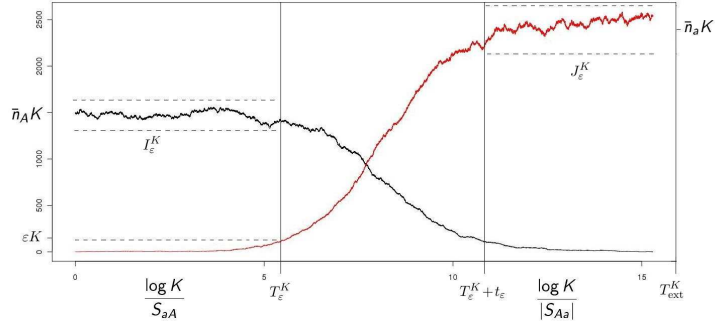}
\caption{The three phases of a selective sweep
The y-axis corresponds to population sizes ($A$ in black, $a$ in red), and the x-axis to the time.
In this simulation,
$K=1000$, $(f_A, f_a) =(2,3), D_\alpha=0.5, \alpha \in \mathcal{A}$, $C_{\alpha,\alpha'}=1, (\alpha,\alpha') \in \mathcal{A}^2$. 
We have also indicated
 some of the notations introduced in Section \ref{sectionP1P2P3}}
  \label{fig3loci}
\end{figure}

\subsection*{First phase} The resident population size stays close to its equilibrium value $\bar{n}_AK$ as long as the mutant population size
has not hit $\lfloor \eps K \rfloor $: if we introduce the finite subset of $\N$
\begin{equation} \label{compact}I_\eps^K:= \Big[K\Big(\bar{n}_A-2\eps \frac{C_{A,a}}{C_{A,A}}\Big),K\Big(\bar{n}_A+2\eps \frac{C_{A,a}}{C_{A,A}}\Big)\Big]
\cap \N, \end{equation}
and the stopping times $T^K_\eps$ and $S^K_\eps$, which denote respectively the hitting time of $\lfloor\eps K \rfloor$ by the mutant population size
 and the exit time of $I_\eps^K$ by the resident population size,
\begin{equation} \label{TKTKeps} T^K_\eps := \inf \{ t \geq 0, N_a(t)= \lfloor \eps K \rfloor \}\quad \text{and} \quad S^K_\eps := \inf \{ t \geq 0,
N_A(t)\notin I_\eps^K \},  \end{equation}
then we can deduce from \cite{champagnat2006microscopic} (see Equations (A.5) and (A.6) in \cite{smadi2014eco} for the details of the derivation) that
the events $\text{Fix}^K$, $\{T_\eps^K \leq S_\eps^K\}$ and $\{T_\eps^K<\infty\}$ are very close:
\begin{equation}\label{A6frommywork}
 \limsup_{K \to \infty}   \P^{(K)}(\{ T_\eps^K \leq S_\eps^K \} \bigtriangleup \text{Fix}^K )\leq c \eps,\quad \text{and} \quad
 \limsup_{K \to \infty}   \P^{(K)}(\{ T_\eps^K <\infty \} \bigtriangleup \text{Fix}^K )\leq c \eps,
\end{equation}
for a finite $c$ and $\eps$ small enough, where we recall convention \eqref{convP}.
In this context, $\bigtriangleup$ is the symmetric difference: for two sets $B$ and $C$,
$B \bigtriangleup C=(B\cap C^c) \cup (C\cap B^c)$.
From this point onwards, "first phase" will denote the time interval $[0,T_\eps^K]$ when the $a$-population size is smaller than $\lfloor \eps K \rfloor$.

\subsection*{Second phase} When $N_A$ and $N_a$ are of order $K$, the rescaled population process
$(N_A/K,N_a/K)$ is well
approximated by the Lotka-Volterra system \eqref{S}.
Moreover, under {Condition \eqref{defnbara}}
the system \eqref{S} has a unique attracting equilibrium $(0,\bar{n}_a)$ for initial conditions $z$ satisfying $z_a>0$,
where $\bar{n}_a$ has been defined in \eqref{defnbar}.
In particular,
if we introduce for $(n_A, n_a) \in \N^2$ the notation,
\begin{equation}\label{convP_} \P_{(n_A,n_a)}(.):=\P(.|N_A(0)=n_A,N_a(0)=n_a), \end{equation}
then Theorem 3 (b) in \cite{champagnat2006microscopic} implies:
\begin{equation}\label{result_champa} \underset{K \to \infty}{\lim}\ \underset{z \in \Gamma}{\sup}\
\P_{{(\lfloor z_AK\rfloor,\lfloor z_aK\rfloor)}}\Big(\underset{0\leq t \leq t_\eps,\alpha \in \mathcal{A}}
{\sup} \Big|\textstyle{\frac{N_\alpha(t)}{K}}-{n}^{(z)}_\alpha(t)\Big|\geq \delta \Big)=0,\end{equation}
for every $\delta>0$, where
\begin{align}
 \label{tetaCfini}&\Gamma:=\Big\{ z \in \R_+^\mathcal{A}, \lfloor z_AK \rfloor
\in I_\eps^K,
z_a\in [{\eps}/{2},\eps]  \Big\},\\
\label{deftepsz} &t_{\eps}(z):=\inf \big\{ s \geq 0,\forall t \geq s, {n}_A^{(z)}(t)\in
[0,\eps^2/2], {n}_a^{(z)}(t) \in[\bar{n}_a-\eps/2,\bar{n}_a+\eps/2] \big\},\\
 &t_\eps:=\sup \{ t_\eps(z),z \in \Gamma\}<\infty. \nonumber
\end{align}

In the sequel, "second phase" will denote the time interval $[T_\eps^K,T_\eps^K+t_\eps]$ when the population process
is close to the solution of the system \eqref{S}.

\subsection*{Third phase} Equation \eqref{result_champa} also implies that
\begin{equation} \label{boundsecondphase}
\underset{K \to \infty}{\lim}\
\P\Big( \textstyle{\frac{N_A(T_\eps^K+t_\eps)}{K}} \in [\omega_1,\omega_2],\Big|\textstyle{\frac{N_a(T_\eps^K+t_\eps)}{K}}
-\bar{n}_a\Big| \leq \eps , \
\Big|  \Big(\textstyle{\frac{N_A(T_\eps^K)}{K},\frac{N_a(T_\eps^K)}{K}} \Big) \in \Gamma \Big)=1,\end{equation}
where
\begin{equation}\label{defomega12} 2\omega_1:= \inf \ \{ n_A^{(z)}(t_\eps), z \in \Gamma \}>0, \quad \text{and}
\quad \omega_2:= 2 \sup \ \{ n_A^{(z)}(t_\eps), z \in \Gamma \}\leq \eps^2.  \end{equation}
The "third phase", which corresponds to the time interval $[T_\eps^K +t_\eps,T_{\text{ext}}^K]$, can be seen as the symmetric counterpart of the first phase, where the roles of $A$ and $a$ are interchanged: during the extinction
of the $A$-population, the $a$-population size stays close to its equilibrium value $\bar{n}_aK$.

Let us introduce the positive real number
 $M'':=3+(f_a+C_{a,A})/C_{a,a}$ and the finite subset of $\N$
\begin{equation} \label{compact2}J_\eps^K:= \Big[K\Big(\bar{n}_a-M''\eps \Big),K\Big(\bar{n}_a+M''\eps \Big)\Big]\cap \N. \end{equation}
The times
$T_u^{(K,A)}$ and ${S}^{(K,a)}_\eps$
are two stopping times for the process restarted after the second phase and
denote respectively the hitting times of $\lfloor uK \rfloor$ by the $A$-population
for $u \in \R_+$,
and the exit time of $J_\eps^K$ by the $a$-population during the third phase,
\begin{equation} \label{T0K} T_u^{(K,A)} := \inf \{ t \geq 0, N_A(T_\eps^K+t_\eps+t)= \lfloor uK \rfloor \},\quad
 {S}^{(K,a)}_\eps := \inf \{ t \geq0, N_a(T_\eps^K+t_\eps+t)\notin J_\eps^K \}.  \end{equation}
If we define the event
\begin{equation}\label{defNepsK} \mathcal{N}_\eps^K:=\{ T_\eps^K \leq S_\eps^K \} \cap \Big\{  \textstyle{\frac{N_A(T_\eps^K+t_\eps)}{K} }
\in [\omega_1,\omega_2],\Big|\textstyle{\frac{N_a(T_\eps^K+t_\eps)}{K}} -\bar{n}_a\Big| \leq \eps  \Big\}, \end{equation}
we get from {the proof of Lemma 3 in} \cite{champagnat2006microscopic} that for a finite $c$ and $\eps$ small enough,
\begin{equation}\label{diffprobaphase3}
\limsup_{K \to \infty}\Big\{\P(\text{Fix}^K
\bigtriangleup [\mathcal{N}_\eps^K \cap \{T_0^{(K,A)} <T_\eps^{(K,A)}\wedge {S}^{(K,a)}_\eps \} ])+\P(\text{Fix}^K
\bigtriangleup [\mathcal{N}_\eps^K \cap \{T_0^{(K,A)} <T_\eps^{(K,A)} \} ] )\Big\}\leq c \eps. \end{equation}
To summarize, the fixation event $\text{Fix}^K$ is very close to the following succession of events:
\begin{enumerate}
 \item[$\bullet$] The $a$-population size hits $\lfloor \eps K\rfloor$ before the $A$-population size has escaped the vicinity of its equilibrium $I_\eps^K$ (first phase)
 \item[$\bullet$] The rescaled {population} process $N/K$ is close to the deterministic competitive Lotka-Volterra system during the second phase
 \item[$\bullet$] The $A$-population size gets extinct before hitting $\lfloor \eps K\rfloor$ and before the
 $a$-population size has escaped the vicinity of its equilibrium $J_\eps^K$ (third phase)
\end{enumerate}

\subsection{Couplings for the first and third phases}

We are interested in the law of the neutral genealogies on the event $\text{Fix}^K$. Equations \eqref{A6frommywork} and \eqref{diffprobaphase3}
imply that it is enough to concentrate our attention on the event $\mathcal{N}_\eps^K \cap \{T_0^{(K,A)} <T_\eps^{(K,A)}\}$,
but the dynamics of the population process $N$ conditionally on this event is complex to study.
Indeed it boils down to studying the dynamics of a process conditioned on a future event ($\{ T_\eps^K \leq S_\eps^K \}$
for the first phase and $\{T_0^{(K,A)} <T_\eps^{(K,A)}\}$ for the third one).
Hence the idea is to couple the population process with two processes, $\tilde{N}$ and $\tilde{\tilde{N}}$,
whose laws are easier to study. 
These processes will satisfy:
\begin{equation}\label{couplage2.1}\underset{K \to \infty}{\limsup} \ \P(\{\exists t \leq T_{\eps}^K, N(t) \neq \tilde{N}(t)\}, T_{\eps}^K<\infty)\leq c \eps.\end{equation}
and
\begin{multline}\label{couplage2.2}\underset{K \to \infty}{\limsup} \
\P(\{\exists \ 0\leq  t-(T_\eps^K+t_\eps) \leq T_0^{(K,A)}, N(t) \neq \tilde{\tilde{N}}(t)\},
 T_0^{(K,A)}<T_\eps^{(K,A)}
| \mathcal{N}_{\eps}^K<\infty)\leq c \eps.\end{multline}

Let $\alpha$ be in $\mathcal{A}$ and $n$ be in $\N^\mathcal{E}$. Denote $n^{(\alpha)}$ the $\alpha$ component of the
population state:
\begin{equation}\label{defn(alpha)}
 n^{(\alpha)}=\sum_{(\beta,\gamma) \in \mathcal{B}\times \mathcal{C} }n_{\alpha \beta \gamma}e_{\alpha \beta\gamma},
\end{equation}
where $(e_{\alpha \beta \gamma}, (\alpha, \beta ,\gamma)\in \mathcal{E})$ is the canonical basis of $\R^\mathcal{E}$.
We are now able to introduce a process needed to describe the couplings:

\begin{defi}\label{defMR}
We denote by \textup{Moran process of type $\alpha$ with recombination $r_2$} a
 process $MR_\alpha^{(n^{(\alpha)})}$
with values in $\N^{\alpha \times \mathcal{B}\times \mathcal{C}}$, initial state $n^{(\alpha)}$, and the following dynamics:
\begin{enumerate}
 \item[$\bullet$] After an exponential time with parameter $f_\alpha\bar{n}_\alpha K$ we
pick uniformly and with replacement three individuals and draw a Bernoulli variable $R$ with parameter $r_2$
\item[$\bullet$] The first individual dies, the second one gives birth to an individual carrying its alleles at loci $SL$ and $N1$, 
the third one is the potential second parent
\item[$\bullet$] If $R=0$, there is
no recombination and the allele at locus $N2$ of the newborn
is also inherited from the second individual; if $R=1$ there is a recombination {between} $N1$ and $N2$
and the newborn inherits its second neutral allele from the third individual
\item[$\bullet$] We again draw an exponential variable with parameter $f_\alpha\bar{n}_\alpha K$ and restart the procedure
\end{enumerate}
\end{defi}

\noindent \textbf{Coupling with $\tilde{N}$}: $N$ and $\tilde{N}$ are equal up to time $S_\eps^K$;
after this time the $A$ individuals in the
population process $\tilde{N}$ follow a Moran process with recombination independent of the $a$-individuals.
Let $\underline{\mathfrak{b}\mathfrak{c}}$ be in $\mathcal{B}\times \mathcal{C}$.
We let the $a$-population evolve as if the $a$-individuals were interacting with $\tilde{N}_A(s)$ individuals with genotype 
$A\underline{\mathfrak{b}\mathfrak{c}}$:
\begin{multline}\label{deftildeN}
 \tilde{N}(t) = \mathbf{1}_{t < S_\eps^K}N(t)+
\mathbf{1}_{t \geq  S_\eps^K} \Big(MR_A^{({N}^{(A)}(S_\eps^K))}(t-S_\eps^K)+
 \underset{(\beta,\gamma)\in \mathcal{B}\times\mathcal{C}}{ \sum} e_{a\beta\gamma}\\
\int_{S_\eps^K}^t\int_{\R_+} \Big[
 Q_{\beta\gamma}^{(1)}(ds,d\theta) \mathbf{1}_{\{0<\theta\leq b^K_{a \beta\gamma}(\tilde{N}_{A}(s-) 
 e_{A\underline{\mathfrak{b}\mathfrak{c}}},\tilde{N}^{(a)}(s^-))\}}
 - 
 Q_{\beta \gamma}^{(2)}(ds,d\theta)
 \mathbf{1}_{\{0<\theta
\leq d^K_{a \beta\gamma}(\tilde{N}_{A}(s^-) e_{A\underline{\mathfrak{b}\mathfrak{c}}},\tilde{N}^{(a)}(s^-))\}}\Big]\Big),
\end{multline}
where $MR_A^{({N}^{(A)})}$ has been defined in Definition \ref{defMR} and $(Q_{\beta \gamma}^{(i)}, i \in \{1,2\},(\beta,\gamma)\in \mathcal{B}\times\mathcal{C} )$ 
are independent Poisson Point processes with
density $dsd\theta$, also independent of $MR^{({N}^{(A)})}$.
The reason for the construction of such a coupling is that we need to control the $A$-population size and the number of
births of $A$-individuals during the first phase in Section \ref{technicalsection}. With the process $\tilde{N}$ such control is achieved easier.

\noindent \textbf{Coupling with $\tilde{\tilde{N}}$}:
 we assume that $\mathcal{N}_\eps^K$ from \eqref{defNepsK} holds;
 $N$ and $\tilde{\tilde{N}}$ are equal up to time $T_\eps^K+t_\eps+S_\eps^{(K,a)}\wedge T_\eps^{(K,A)}$.
Then the $a$-individuals in the
population process $\tilde{\tilde{N}}$ follow a Moran process with recombination independent of the $A$-individuals, and
each $A\beta\gamma$-population evolves as a birth and death process with
individual birth and death rates $f_A$ and $f_A+|S_{Aa}|$, independent of the $a$-individuals and the $A\beta'\gamma'$-populations with
$(\beta,\gamma)\neq (\beta',\gamma')$:
\begin{multline}\label{deftildetildeN}
 \tilde{\tilde{N}}(T_\eps^K+t_\eps+t) = \mathbf{1}_{t < S_\eps^{(K,a)}}N(T_\eps^K+t_\eps+t)+ \mathbf{1}_{t \geq  S_\eps^{(K,a)}} \Big(
MR_a^{({N}^{(a)}(S_\eps^{(K,a)}))}(t-S_\eps^{(K,a)})+\\
 +\underset{(\beta,\gamma)\in \mathcal{B}\times\mathcal{C}}{ \sum}
e_{A\beta\gamma}\Big[\int_{T_\eps^K+t_\eps+S_\eps^{(K,a)}}^{T_\eps^K+t_\eps+t}\int_{\R_+}
 Q_{\beta\gamma}(ds,d\theta)
\Big\{\mathbf{1}_{\{0<\theta
\leq f_A\tilde{\tilde{N}}_{A\beta\gamma}(s^-)\}}
-\mathbf{1}_{\{0<\theta
- f_A\tilde{\tilde{N}}_{A\beta\gamma}(s^-)\leq  (f_A+|S_{Aa}|)\tilde{\tilde{N}}_{A\beta\gamma}(s^-) \}}\Big\}\Big]\Big),
\end{multline}
where $MR_a^{({N}^{(a)})}$ has been defined in Definition \ref{defMR} and is independent of the sequence of
independent Poisson
measures $(Q_{\beta\gamma}, (\beta,\gamma) \in \mathcal{B}\times\mathcal{C})$, with intensity $dsd\theta$.
The $a$-population size and the number of
births of $a$-individuals will be easy to control for the process $\tilde{\tilde{N}}$ during the third phase, and again we will need such control in 
Section \ref{technicalsection}.\\

Inequality \eqref{couplage2.1} follows from \eqref{A6frommywork}.
Moreover, from the proof of Lemma 3 in \cite{champagnat2006microscopic} we know that
$$\liminf_{K \to \infty} \P(T_0^{(K,A)}<T_\eps^{(K,A)}\wedge S_\eps^{(K,a)} |\mathcal{N}_\eps^K)\geq 1-c\eps $$
for a finite $c$ and $\eps$ small enough. Adding \eqref{diffprobaphase3} we get that \eqref{couplage2.2} is also satisfied.
Hence we will study the processes $\tilde{N}$ and $\tilde{\tilde{N}}$ and deduce properties of the dynamics of the process $N$ during
the first and third phases.

\section{Proofs of the main results}
 \label{sectionproofmain}

As the proof of Theorem \ref{mainresult2} is simpler than this of Theorem \ref{mainresult}, and follows essentially the same ideas, we only prove
Theorem \ref{mainresult}.
 
\subsection{Events impacting the genealogies in each phase}\label{subsec:eventsallphases}

Let us now summarize the results on the genealogies for the three successive phases of the sweep that we will derive in
Sections \ref{firstphase} and \ref{secondphase}.\\

\noindent  \textbf{First phase:} As explained in the previous section, we work with the process $\tilde{N}$ to study the first phase.
Let us introduce the jump times of $\tilde{N}$:
\begin{equation}\label{deftpssauts}
\tau_0^K=0 \quad \text{and} \quad \tau_m^K= \inf \{ t> \tau_{m-1}^K, \tilde{N}(t)\neq \tilde{N}(\tau_{m-1}^K) \}, \quad m \geq 1.
\end{equation}
The number of jumps during the first phase is denoted by $J^K(1)$:
\begin{equation}\label{defJK1}
 J^K(1):= \inf \{ m \in \N, \tilde{N}_a(\tau_m^K)=\lfloor \eps K \rfloor \}.
\end{equation}
Coalescence and recombination events are defined as follows (see Figure \ref{coalevent}):

\begin{defi}\label{defcoalreco}
Sample two distinct individuals at time $\tau_m^K$ and denote $\alpha\beta\gamma$
 and $\alpha'\beta'\gamma'$ their type.

We say that $\beta$ and $\beta'$ coalesce at time $\tau_m^K$ if they are carried by two distinct individuals at time $\tau_{m}^K$ and 
by the same
individual at time $\tau_{m-1}^K$. Seen forwards in time it corresponds to a birth and hence a copy of the neutral allele. Seen backwards in time it corresponds to the
fusion of two neutral alleles into one, carried by one parent of the newborn. We define in the same way coalescent events at locus $N2$
(resp. loci $N1$ and $N2$) for alleles $\gamma$ and $\gamma'$ (resp. allele pairings $(\beta, \gamma)$ and $(\beta', \gamma')$).

We say that $\beta$ (and/or $\gamma$) recombines at time $\tau_m^K$ from the $\alpha$- to the $\alpha'$-population if the individual carrying the allele $\beta$ (and/or $\gamma$)
 at time $\tau^K_m$ is a newborn,
 carries the allele $\alpha$ inherited from it first parent, and has inherited its allele $\beta$ (and/or $\gamma$) from a different individual
carrying allele $\alpha'$.
\end{defi}

We are only interested in
recombinations which entail new associations of alleles.
In particular we will not consider the simultaneous recombinations of a pair $(\beta,\gamma)$
within the $\alpha$-population.

\begin{figure}[h]
  \centering
 \includegraphics[width=7cm,height=3cm]{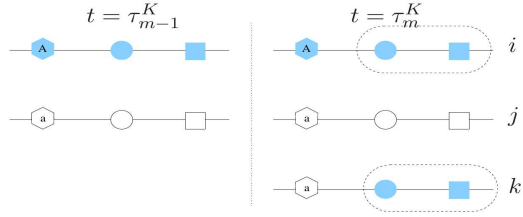}
\caption{Illustration of Definition \ref{defcoalreco}: the newborn (individual $k$) has inherited the selected allele from its
"white" parent and the two neutral alleles from its "blue" parent; hence the encircled neutral loci (of individuals $i$ and $k$) coalesce at time $\tau_m^K$.
In terms of recombinations, the two neutral loci of the newborn individual recombine at time $\tau_m^K$ from the $a$- to the
$A$-population}
  \label{coalevent}
\end{figure}

Let us now describe the genealogical scenarios which modify the ancestral relationships between the neutral alleles of one individual
and occur with positive probability when $K$ is large.
We first focus on the first phase and pick uniformly an individual
$i$ from the $a$-population at time $\tilde{T}_\eps^K$. We introduce:
\label{defgenealogy}\begin{align*}
\begin{aligned}
 NR(i)^{(1)}: &\quad \text{there is no recombination into the $A$-population affecting $(i,1)$ or $(i,2)$}\\
&\quad \text{and both neutral loci of the $i$-individual
 originate from the first mutant,}\\
 R2(i)^{(1)}: &\quad \text{only the neutral allele $(i,2)$ is affected by a recombination with the $A$-population,}\\
 &\quad \text{hence $(i,1)$ originates from the first mutant and $(i,2)$ from an $A$-individual,}\\
 R12(i)^{(1)}: &\quad \text{one recombination between $SL$ and $N1$ from the $a$- into the $A$-population occurs}\\
 &\quad \text{and both neutral alleles $(i,1)$ and $(i,2)$ originate from the same $A$-individual,}\\
 [2,1]^{rec}_{A,i}: &\quad
 \text{first (backwards in time) $(i,2)$ recombines into the A-population, then $(i,1)$}\\
 &\quad \text{recombines into the A-population and connects to a different individual than $(i,2)$.}\\
 [12,2]^{rec}_{A,i} : &\quad
 \text{first (backwards in time) the tuple $\{(i,1),(i,2)\}$ recombines into the $A$-population, }\\
&\quad \text{then a second recombination splits the two neutral loci inside the $A$-population.}\\
 R1|2(i)^{(1,ga)}: &\quad [2,1]^{rec}_{A,i}\cup [12,2]^{rec}_{A,i} \text{ (see Figure \ref{schemagendefevent})}
\end{aligned}
\end{align*}

\begin{figure}[h]
  \centering
 \includegraphics[width=6cm,height=3.5cm]{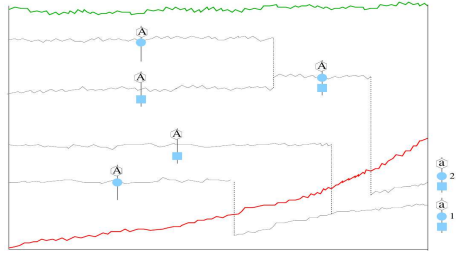}
\caption{Illustration of events $[2,1]^{rec}_{A,i}$ (individual $1$) and $[12,2]^{rec}_{A,i}$ (individual $2$)}
  \label{schemagendefevent}
\end{figure}

Finally, we introduce a conditional probability for the process $\tilde{N}$:
\begin{equation}\label{defP1}
 \P^{(1)}(.)=\P(.|J^K(1)<\infty),
\end{equation}
where $J^K(1)$ has been defined in \eqref{defJK1}.
Hence, recalling the definition of $(q_1,q_2,q_3)$ in \eqref{defq1q2} we will prove in Section \ref{firstphase}:

\begin{pro}[Neutral genealogies during the first phase]\label{prop:1phase_probg1}
Let $i$ be an $a$-individual sampled uniformly at the end of the first phase (time $\tilde{T}_\eps^K$). Under Assumption \ref{asstotale}, there
exist two finite constants $c$ and $\eps_0$ such that for every $\eps\leq \eps_0$,
\begin{multline*}
 \limsup_{K \to \infty} \left\{\Big|\P^{(1)}(NR(i)^{(1)})-q_1 q_2\Big|+ \Big|\P^{(1)}(R2(i)^{(1)})-q_1
(1-q_2)\Big|\right.\\
 \left. + \Big|\P^{(1)}(R12(i)^{(1)})-q_3\Big|+ \Big|\P^{(1)}(R1|2(i)^{(1,ga)})-
(1-q_1 - q_3)\Big|\right\}\leq  c\eps.
\end{multline*}
\end{pro}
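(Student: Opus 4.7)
The plan is to use the coupling with $\tilde{N}$ from Section \ref{sectioncoupling}, under which the $A$-population is essentially independent of the $a$-population during the first phase and stays in $I_\eps^K$. Conditional on $\{J^K(1)<\infty\}$, the $a$-population is well approximated by a supercritical linear birth--death process with birth rate $f_a$ and death rate $f_a-S_{aA}$ conditioned on survival, from which classical branching-process estimates give $\tilde{T}_\eps^K = \log K/S_{aA} + O_\P(1)$. The proof then traces back the lineages of $(i,1)$ and $(i,2)$ of a uniformly sampled $a$-individual at $\tilde{T}_\eps^K$, keeping track at each instant of the current carrier of each allele, its type at $SL$, and whether the two lineages currently reside in the same individual.

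The single-lineage input is the following: backward in time, birth events on a lineage in the $a$-population occur at rate $f_a$, and at each such event an $SL$-$N1$ (resp.\ $N1$-$N2$) recombination occurs independently with probability $r_1$ (resp.\ $r_2$); conditionally on a recombination, the father is of type $A$ with probability $f_A\bar{n}_A K/(f_A\bar{n}_A K+f_a n_a) = 1-O(\eps)$ uniformly during the first phase. The weak recombination assumption \eqref{assrK} then yields an asymptotic Poisson structure with parameter $f_a r_k \tilde{T}_\eps^K \to f_a r_k\log K/S_{aA}$, so the probability of no effective type-$k$ recombination along a single lineage tends to $q_k$. The parallel computation in the $A$-background, where carriers reproduce at rate $f_A$ under the Moran component $MR_A$ of $\tilde{N}$, gives the analogous survival probability $q_k^{f_A/f_a}$.

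The probabilities of $NR(i)^{(1)}$ and $R2(i)^{(1)}$ then follow immediately, using that the two recombination channels are asymptotically independent since a double recombination at a single birth has probability $r_1 r_2 = o(1/\log K)$. For $R12(i)^{(1)}$ the joint $(N1,N2)$ lineage must first (backward) undergo an $SL$-$N1$ recombination with no prior $N1$-$N2$ recombination in the $a$-population, and subsequently experience no $N1$-$N2$ recombination in the $A$-background. Poissonization gives
\[
\P^{(1)}(R12(i)^{(1)}) \sim \int_0^{\tilde{T}_\eps^K} f_a r_1\, e^{-f_a(r_1+r_2)\sigma}\, e^{-f_A r_2(\tilde{T}_\eps^K-\sigma)}\, d\sigma,
\]
where $\sigma$ denotes the backward time of the $SL$-$N1$ recombination; this integral evaluates, using $\tilde{T}_\eps^K \sim \log K/S_{aA}$, to $q_3$ as defined in \eqref{defq1q2}. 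Finally, $R1|2(i)^{(1,ga)}$ is obtained by complement: up to the vanishing probability that one allele escapes alone without the other, the four events $NR$, $R2$, $R12$, $R1|2$ exhaust the possibilities, and $\P^{(1)}(R1|2) \to 1-q_1-q_3$.

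The main difficulty is to make this Poissonization rigorous in the presence of the nontrivial coupling between the lineages and the $a$-population size. The key estimates are: (i) that the cumulative probability over the first phase of the event ``the father at a recombination is of type $a$'' is $O(\eps)$, via $n_a(t)/K \leq \eps e^{-S_{aA}(\tilde{T}_\eps^K - t)}$ combined with \eqref{assrK}; (ii) that the $O_\P(1)$ fluctuation of $\tilde{T}_\eps^K$ around $\log K/S_{aA}$ perturbs each $q_k$ by a multiplicative factor $1+O(r_k) = 1+o(1)$; and (iii) that lineages sent into the $A$-background, now governed by the Moran component $MR_A$, experience $N1$-$N2$ recombinations at the clean rate $f_A r_2$ with no return to the $a$-background (the return rate is $O(r_k n_a/K)$, again integrable to $O(\eps)$). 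Summing these errors yields the $c\eps$ bound stated in the proposition.
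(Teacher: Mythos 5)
Your outline reaches the correct constants, and your continuous-time Poissonization is precisely the heuristic the paper itself sketches after the statement of Proposition \ref{prop:1phase_probg1}; in particular your integral for $R12(i)^{(1)}$ is the continuous-time analogue of the paper's sum over the $a$-population level $l$ at which the $SL$-$N1$ recombination occurs (substitute $l\approx \eps K e^{-S_{aA}\sigma}$, so that $\tilde T_\eps^K-\sigma\approx\log l/S_{aA}$), and it does evaluate to $q_3$. Your route differs from the paper's in two genuine ways. First, the paper never works in the time variable: every event is decomposed according to the value of $\tilde N_a$ at the relevant jump, and all "rates" are replaced by expected numbers of upcrossings $U_k^K(1)$ and of $A$-births $\mathcal{U}_k^K(1)$ (Lemmas \ref{lemmaexpup} and \ref{lemespvarmathcalU}), the resulting sums being evaluated by Lemma \ref{equivalent}. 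Second, you obtain $R1|2(i)^{(1,ga)}$ by complement after computing $NR$, $R2$ and $R12$ directly, whereas the paper computes $R1|2$ directly as $\P^{(1)}([2,1]^{rec}_{A,i})+\P^{(1)}([12,2]^{rec}_{A,i})$ (Propositions \ref{lemma_21Arec} and \ref{lemma_122Arec}) and recovers $NR$ by complement. Since only the union of the two sub-events enters the main theorem, your choice is legitimate and spares you the most laborious computation of Section \ref{firstphase} (the double decomposition of Lemma \ref{claim1}); it does, however, make exhaustiveness load-bearing, so you must state and bound all the negligible configurations of Lemma \ref{lemma_negevents} ($aAa$, $CR$, $CA$, $2R$ and $R2a$), not only the return event and the within-$a$ separation that you mention.

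The one genuine gap is in making the Poissonization quantitative. Conditionally on the trait path $\mathcal F$, the no-recombination probability is the exact product $\prod_m(1-\theta(m))$, and replacing $\E^{(1)}\big[\prod_m(1-\theta(m))\big]$ by $\exp\big(-\E^{(1)}\big[\sum_m\theta(m)\big]\big)$ requires a concentration estimate for the random integrated intensity $\sum_m\theta(m)$, not merely control of the endpoint $\tilde T_\eps^K$. Your item (ii) handles only the $O_\P(1)$ fluctuation of $\tilde T_\eps^K$; it does not control the fluctuation of the number of lineage birth events accumulated at each population level, which is where the paper spends most of its effort: the variance of the quantity $\sum_{k}(r_1+r_2)U_k^K(1)/(k+1)$ is bounded by $c\eps$ only through the covariance estimate $|\cov^{(1)}(U_{k}^{K}(1),U_{k'}^{K}(1))|\le c(\lambda_\eps^{(k-k')/2}+\eps)$ of \eqref{covUmlk}, with the analogous bound for the $A$-background sojourn coming from Lemma \ref{lemespvarmathcalU}. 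Without this second-moment input (or an equivalent one in your time parametrization), the passage from the conditional product to the unconditional exponential is unjustified and the stated error bound cannot be obtained; you should add this variance step explicitly.
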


For large $K$, the sum of the four probabilities of Proposition \ref{prop:1phase_probg1} equals one up to a constant times $\eps$.
 Hence, in the limit we only
observe the events described on page
\pageref{defgenealogy}.
The probabilities of the first two events are quite intuitive: broadly speaking, the probability to have
no recombination at a birth
event is $1-r_1-r_2$, the birth rate is $f_a$ and the duration of the first phase is $\log K/S_{aA}$. Hence under $\P^{(1)}$,
the probability of the event $NR(i)^{(1)}$ is approximately
$$ (1-(r_1+r_2))^{f_a\log K/S_{aA}}\sim \exp(-(r_1+r_2))^{f_a\log K/S_{aA}}=q_1q_2. $$
Similarly the probability to have no recombination between $SL$ and $N1$ is close to $q_1$ and subtracting the probability
of $NR(i)^{(1)}$ we get this of $R2(i)^{(1)}$. The probabilities of $R12(i)^{(1)}$ and $R1|2(i)^{(1,ga)}$ are more involved.
 The proofs rely on a fine study of the different possible scenarios.\\

\noindent \textbf{Second phase:} We work with the process $N$ to study the second phase. The latter one
has a duration of order $1$,
and the recombination probabilities are negligible with respect to one (Condition \eqref{assrK}).
 Consequently, no event impacting the genealogies of the neutral loci occurs during the
second phase. More precisely, let us sample uniformly two distinct $a$-individuals $i$ and $j$ at the end of the second phase
(time $T_\eps^K+t_\eps$) and introduce
the events:\label{defgenealogy3}
\begin{align*}
\begin{aligned}
  NR(i)^{(2)}: &\quad \text{there is no recombination affecting $(i,1)$ or $(i,2)$,}\\
 NC(i,j)^{(2)}: &\quad \text{there is no coalescence  between the neutral genealogies of $i$ and $j$.}
\end{aligned}
\end{align*}

Then we have the following result, which will be proven in Section \ref{secondphase}.

\begin{pro}[Neutral genealogies during the second phase]\label{prosecondphase}
Let $i$ and $j$ be two distinct $a$-individuals sampled uniformly at the end of the second phase (time $T_\eps^K+t_\eps$).
 Then under Assumption \ref{asstotale},
 \begin{equation*}\label{reconegli2}
 \lim_{K \to \infty}  \P(NR(i)^{(2)} \cap NC(i,j)^{(2)}|T_\eps^K\leq S_\eps^K)=1.
 \end{equation*}
\end{pro}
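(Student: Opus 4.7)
The plan is to exploit the fact that the second phase has bounded duration $t_\eps$ (independent of $K$) combined with the fact that, by Condition \eqref{assrK}, each birth produces a recombination with probability only $O(1/\log K)$, and that throughout the second phase the $a$-population size is of order $K$. These three ingredients combine to show that the expected number of genealogical events (recombinations or coalescences) touching the sampled lineages is $o(1)$.

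First, using the fluid limit \eqref{result_champa} (and shrinking $\eps$ if necessary), I would reduce to an event $\mathcal{E}_\eps^K$ satisfying $\P(\mathcal{E}_\eps^K \mid T_\eps^K \leq S_\eps^K) \to 1$, on which $c_1 K \leq N_a(T_\eps^K + s) \leq c_2 K$ uniformly for $s \in [0, t_\eps]$, for some constants $0 < c_1 < c_2$. On this event all per-individual death and birth rates remain bounded away from $0$ and $\infty$.

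Next I would count events forward in time. The total number of $a$-birth events during the second phase has expectation bounded by $f_a c_2 K t_\eps = O(K)$, and by \eqref{assrK} each such birth is a recombination with probability at most $r_1 + r_2 = O(1/\log K)$, so the expected number of recombinations is $O(K/\log K)$. For a fixed recombination event occurring at time $T_\eps^K + s$, the probability that its newborn is the ancestor at locus $N1$ (or $N2$) of the uniformly chosen $i$ or $j$ is $O(1/K)$: on $\mathcal{E}_\eps^K$ each time-$(T_\eps^K + t_\eps)$ individual has a unique ancestor at each locus among the $\Theta(K)$ individuals alive at time $T_\eps^K+s$ (up to rare further events which can be absorbed in the error), and the sample is uniform. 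Summing over the four lineages $(i,1),(i,2),(j,1),(j,2)$ and applying Markov's inequality yields $\P(\overline{NR(i)^{(2)}}\cup\overline{NR(j)^{(2)}} \mid T_\eps^K\le S_\eps^K) = O(1/\log K)$. For the absence of coalescence between the genealogies of $i$ and $j$, I would use the analogous argument: two distinct backward lineages in a population of size $\Theta(K)$ merge at backward rate $O(1/K)$, so over the bounded duration $t_\eps$ and with $O(1)$ relevant pairs of lineages the expected number of coalescence events is $O(1/K) \to 0$; Markov again concludes.

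The main obstacle is making the per-lineage rate bounds rigorous, since the process is not a pure Moran model and the birth/death rates depend on the global population state; I would handle this using the Poisson point process representation of $N^K$ (analogous to the one used for $\tilde N$ in \eqref{deftildeN}) together with the uniform population bounds from Step~1 to dominate each quantity by a Poisson count with explicit rate. Because the relevant probabilities decay as $1/\log K$ and $1/K$ respectively, the argument is considerably shorter than that needed for Proposition \ref{prop:1phase_probg1}, where the first phase lasts of order $\log K$ and the dynamics must be studied in much finer detail.
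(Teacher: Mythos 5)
Your proposal is correct and follows essentially the same route as the paper: the paper likewise restricts to a high-probability event on which $N_a\geq \eps^2K/4$ throughout $[T_\eps^K,T_\eps^K+t_\eps]$, bounds the per-birth probability of a recombination (resp.\ coalescence) hitting a sampled lineage by $O(r_j/K)$ (resp.\ $O(1/K^2)$) via Lemmas \ref{lempcoal} and \ref{lemma:recprob}, controls the number of births $U^K(2)$ by $K\log\log K$ using a Poisson domination and the law of large numbers, and concludes with a first-moment/product bound using Condition \eqref{assrK}. The only cosmetic difference is that the paper organizes the estimate as $(1-p)^{U^K(2)}$ conditioned on the birth count rather than as a Markov inequality on the expected number of bad events.
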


\noindent \textbf{Third phase:} Finally, we focus on the process $\tilde{\tilde{N}}$.
When $K$ is large, there is only one event occurring with positive probability during the
third phase which may modify the ancestry of the neutral alleles of an individual $i$ sampled at the end of the sweep in the adjacent
geometry:

\begin{align}\label{defgenealogy4}
\begin{aligned}
 R2(i)^{(3,ga)}: &\quad \text{a recombination between loci $N1$ and $N2$ occurs and separates}\\
 &\quad \text{$(i,1)$ and $(i,2)$ within the $a$-population,}
\end{aligned}
\end{align}
Indeed, if we also define the events
\label{defgenealogy5}\begin{align*}
\begin{aligned}
 NR(i)^{(3)}: &\quad \text{there is no recombination affecting $(i,1)$ or $(i,2)$ and they both}\\
&\quad \text{originate
  from the same $a$-individual at the end of the second phase }\\
   NC(i,j)^{(3)}: &\quad \text{defined as $NC(i,j)^{(2)}$ for two distinct individuals sampled}\\
&\quad \text{uniformly at the end of the sweep},
\end{aligned}
\end{align*}
and the conditional probability for the process $\tilde{\tilde{N}}$::
\begin{equation}
 \P^{(3)}(.):=\P(.|\mathcal{N}_\eps^K ,\tilde{\tilde{T}}_0^{(K,A)}<\tilde{\tilde{T}}_\eps^{(K,A)}),
\end{equation}
where $\tilde{\tilde{T}}_0^{(K,A)}$ and $\tilde{\tilde{T}}_\eps^{(K,A)}$ are the analogs of
 ${T}_0^{(K,A)}$ and $T_\eps^{(K,A)}$ (defined in \eqref{T0K}) for the process $\tilde{\tilde{N}}$,
then we will prove in Section \ref{secondphase}:

\begin{pro}[Neutral genealogies during the third phase]\label{prothirdphase}
Let $i$ and $j$ be two distinct $a$-individuals sampled uniformly at the end of the sweep.
Under Assumption \ref{asstotale}, there
exist two finite constants $c$ and $\eps_0$ such that for every $\eps\leq \eps_0$,
$$
\underset{K \to \infty}{\limsup}\left\{\Big| \P^{(3)}(R2(i)^{(3,ga)}) -
( 1- \bar{q}_2 ) \Big|+\Big| \P^{(3)}(NR(i)^{(3)}) -
 \bar{q}_2 \Big|+\Big|\P^{(3)}(NC(i,j)^{(3)})-1\Big| \right\}\leq c \sqrt{\eps}.$$
\end{pro}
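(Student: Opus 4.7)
The plan is to exploit the coupling with $\tilde{\tilde{N}}$ built in Section \ref{sectioncoupling}: on the event $\mathcal{N}_\eps^K \cap \{\tilde{\tilde{T}}_0^{(K,A)}<\tilde{\tilde{T}}_\eps^{(K,A)}\}$, modulo a probability loss of order $\eps$, the $a$-population on $[T_\eps^K+t_\eps+S_\eps^{(K,a)},T_\eps^K+t_\eps+\tilde{\tilde{T}}_0^{(K,A)}]$ follows an autonomous Moran process $MR_a$ of type $a$ with recombination $r_2$ on a pool of size $N^{(a)}(S_\eps^{(K,a)})\in J_\eps^K$, while the $A$-subpopulations evolve as independent linear birth--death processes with birth rate $f_A$ and death rate $f_A+|S_{Aa}|$. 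This complete decoupling makes the third phase the exact symmetric counterpart of the first phase, with the roles of $A$ and $a$ (and of $S_{aA}$ and $|S_{Aa}|$) interchanged.

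The first step is to show that, under $\P^{(3)}$, the random duration $\tilde{\tilde{T}}_0^{(K,A)}$ is concentrated around $\log K/|S_{Aa}|$. Because the $A$-component of $\tilde{\tilde{N}}$ starts from $N_A(T_\eps^K+t_\eps)\in[\omega_1 K,\omega_2 K]$ and is the sum of $|\mathcal{B}\times\mathcal{C}|$ independent subcritical linear birth--death processes with individual growth rate $-|S_{Aa}|$, classical extinction time estimates (the very same as those used for the first phase in \cite{champagnat2006microscopic}) yield
$$
\P^{(3)}\Big(\Big|\tilde{\tilde{T}}_0^{(K,A)}-\tfrac{\log K}{|S_{Aa}|}\Big|\geq C(\eps)\Big)\leq c\sqrt{\eps},
$$
for a constant $C(\eps)$ with $r_2 C(\eps)=O(\sqrt{\eps})$ thanks to Condition \eqref{assrK} (the constant picks up $\log \omega_1$ and the $O(1)$ Yule--type fluctuation of the extinction time, both of which are absorbed into $\sqrt{\eps}$ after multiplication by $f_a r_2$).

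The second step analyses the backward genealogy of a uniformly sampled $a$-individual $i$ inside $MR_a$. By definition of the Moran dynamics, the lineage carrying $(i,1)$ and $(i,2)$ jumps backwards exactly when $i$ is picked as the dying individual, which occurs at rate $f_a \bar{n}_a K \cdot 1/(\bar{n}_a K) = f_a$; at each such jump both alleles trace back to the female parent with probability $1-r_2$, and $(i,2)$ separates from $(i,1)$ into the male parent with probability $r_2$. Conditionally on $\tilde{\tilde{T}}_0^{(K,A)}=T$, the number $G_T$ of such backward jumps is Poisson with mean $f_a T$, and until the first separating recombination $(i,1)$ and $(i,2)$ stay together. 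Hence
$$
\P^{(3)}\bigl(NR(i)^{(3)}\,\big|\,\tilde{\tilde{T}}_0^{(K,A)}=T\bigr)=\E\bigl[(1-r_2)^{G_T}\bigr]=\exp\bigl(-f_a r_2 T(1+o(1))\bigr),
$$
and combining with the concentration of $T$ and $\limsup_K r_2\log K<\infty$ yields $\P^{(3)}(NR(i)^{(3)})=\bar{q}_2+O(\sqrt{\eps})$. Since the small fluctuations of $N_a$ within $J_\eps^K$ only perturb $f_a$ by a factor $1+O(\eps)$ over the duration $O(\log K)$, the error they generate in the exponent is again $O(\sqrt{\eps})$ after multiplication by $r_2\log K$. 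The event $R2(i)^{(3,ga)}$ is the complement of $NR(i)^{(3)}$ up to negligible events (multiple recombinations at the same birth, whose per-event probability is $r_1r_2$, and backward coalescences with other lineages), giving $\P^{(3)}(R2(i)^{(3,ga)})=1-\bar{q}_2+O(\sqrt{\eps})$.

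The third step handles non-coalescence of two sampled individuals $i\neq j$. Backward coalescence of their two lineages occurs at a Moran event only if both indices appear among the three sampled individuals, which happens at rate $O(1/K)$ per event, hence at backward rate $O(1/K)$ for each lineage jump; over $G_{\tilde{\tilde{T}}_0^{(K,A)}}=O(\log K)$ jumps the probability of any coalescence is $O(\log K/K)=o(1)$, so $\P^{(3)}(NC(i,j)^{(3)})=1-o(1)$. The main obstacle is the second step: turning the informal Poisson approximation into an actual $\sqrt{\eps}$ bound requires controlling uniformly in $K$ both the fluctuations of $\tilde{\tilde{T}}_0^{(K,A)}$ around $\log K/|S_{Aa}|$ and the fluctuations of $N_a$ inside $J_\eps^K$, and then converting these into additive errors on the exponent $f_a r_2 \tilde{\tilde{T}}_0^{(K,A)}$ via Condition \eqref{assrK}; all other ingredients are either direct consequences of the decoupling in \eqref{deftildetildeN} or of standard subcritical branching process estimates.
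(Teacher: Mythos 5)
Your conclusions and the guiding intuition (duration of the third phase $\approx\log K/|S_{Aa}|$, backward recombination rate $\approx f_ar_2$) are correct, but your route is genuinely different from the paper's, and its crux is asserted rather than proved. The paper never estimates the duration of the third phase: it conditions on the whole trait trajectory, writes the conditional probability of no $N1$--$N2$ recombination as an exact product over $a$-birth events of factors $1-p^{(2)}_{aa}$, and reruns the Poisson-approximation machinery of the first phase (Lemma \ref{probeventphase1}, Equation \eqref{approxhatPRAl}, with the roles of $A$ and $a$ exchanged); the exponent $f_ar_2\log K/|S_{Aa}|$ then comes from Lemma \ref{lemespvarU}, which counts the $a$-births indexed by the current $A$-population size and -- crucially -- bounds the \emph{variance} of that count, which is what turns the estimate into the uniform $c\sqrt{\eps}$ bound via the Cauchy--Schwarz step \eqref{ineqX1X2}. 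Your replacement of this by (i) concentration of $\tilde{\tilde{T}}_0^{(K,A)}$ and (ii) a conditional Poisson law for the backward jump count is exactly where the work lies: the lineage's jump times and the extinction time are functionals of the same trajectory, since the decoupling in \eqref{deftildetildeN} only takes effect after $S_\eps^{(K,a)}$ and on the typical event $\tilde{\tilde{N}}=N$ throughout the third phase (your opening claim of complete decoupling over the whole phase misreads the coupling; what one actually has is the two-sided domination \eqref{couplage3} of $\tilde{\tilde{N}}_A$ by subcritical birth--death processes). Making (ii) quantitative would essentially force you back to the second-moment estimates of Lemma \ref{lemespvarU}, so the time-based route buys readability but not less work. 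Two further omissions: you never rule out recombinations from the $a$- into the $A$-population, yet this event $NRA(i)^{(3)}$ is exactly what the paper must control (per-birth probability at most $f_A(r_1+r_2)\eps/(f_a(\bar{n}_a-M''\eps)^2K)$ times $O(K\log K)$ births, giving the $O(\sqrt{\eps})$ term) before identifying $NR(i)^{(3)}$ with the complement of $R2(i)^{(3,ga)}$; and the non-coalescence estimate is cleaner per $a$-birth ($O(1/K^2)$ by Lemma \ref{lempcoal}, times $O(K\log K)$ births, as in Proposition \ref{prosecondphase}) than per lineage jump, though both yield $O(\log K/K)$.
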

In particular, there is no recombination with the $A$-population during the third phase. As for the probabilities of the first two events 
in the Proposition \ref{prop:1phase_probg1} this result is quite intuitive, as the duration of the third phase is close to $\log K/|S_{Aa}|$.\\

\noindent \textbf{Independence:} Finally we again consider the population process $N$ and state a proposition which enables us to give the
statement of Theorem \ref{mainresult} independently for all sampled individuals, that is, jointly for the whole sample.
To this aim, let us introduce a partition $ \Theta^{(K,1)}_d \in \mathcal{P}_d^*$ which is the analog of $\Theta^K_d$ where
the $d$ individuals are sampled at the end of the first phase and not at the end of the sweep.
Recall Definitions \ref{defdelta} and \ref{defpartition},
and denote by $|R2^{(3,ga)}|_d$ (resp. $|NR^{(3)}|_d$) the number of $a$-individuals in a $d$-sample taken at the end of the sweep
whose neutral alleles originate from two distinct $a$-individuals (resp. from the same $a$-individual)
at the beginning of the third phase.
Then we have the following result:

\begin{pro}\label{proindi}
Let Assumption \ref{asstotale} hold. Then there
exist two finite constants $c$ and $\eps_0$ such that for every $\eps\leq \eps_0$,
the ancestral relationships of a $d$-sample
taken at the end of the first phase (time $T_\eps^K$) satisfy for every  $(m_k,1\leq k \leq 4) \in \Z_+^4$:
\begin{multline*}
 \limsup_{K\rightarrow \infty} \Big| \P( |\Theta^{(K,1)}_d|_k=m_k,1\leq k \leq 4|T_\eps^K\leq S_\eps^K)\\
- \mathbf{1}_{\{m_1+m_2+m_3+m_4=d\}}\frac{d!}{m_1!m_2!m_3!m_4!}(q_1q_2)^{m_1}(q_1(1-q_2))^{m_2}q_3^{m_3}(1-q_1-q_3)^{m_4} \Big|
\leq c {\eps}.
\end{multline*}
In the same way, the neutral genealogy of a $d$-sample taken at the end of the sweep satisfies for every  $(m_k,1\leq k \leq 2) \in \Z_+^2$:
\begin{equation*}
 \limsup_{K\rightarrow \infty} \Big| \P((|R2^{(3,ga)}|_d,|NR^{(3)}|_d)=(m_1,m_2)|\mathcal{N}_\eps^K) -\mathbf{1}_{\{m_1+m_2=d\}}
 \frac{d!}{m_1!m_2!}(1-\bar{q}_2)^{m_1}\bar{q}_2^{m_2} \Big| \leq c{\eps}.
\end{equation*}
\end{pro}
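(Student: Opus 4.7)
The strategy is to promote the single-individual marginal laws established in Propositions \ref{prop:1phase_probg1} and \ref{prothirdphase} to a full joint statement for the $d$-sample by proving asymptotic independence across the sampled individuals, and then applying the standard multinomial combinatorics. The two parts of the proposition (first phase and third phase) are proved in the same style, but the third-phase argument is considerably easier.

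For the third phase, I would work under $\P^{(3)}$ with the coupled process $\tilde{\tilde{N}}$. During this phase the $a$-population stays in the window $J_\eps^K$ which has size of order $K$, while the phase itself lasts a time of order $\log K/|S_{Aa}|$. A first-moment estimate, analogous to the one already needed inside the proof of the $NC(i,j)^{(3)}$ part of Proposition \ref{prothirdphase}, bounds the probability that any one of the $\binom{d}{2}$ pairs of sampled $a$-lineages coalesces during the phase by $O((\log K)/K)=o(1)$; hence with probability $1-o(1)$ the $d$ lineages are pairwise disjoint throughout. Given disjointness, the splitting of $(i,1)$ and $(i,2)$ inside the $a$-population is controlled on each lineage by an independent Poisson clock built into the construction of $\tilde{\tilde{N}}$ via $MR_a$ (Definition \ref{defMR}). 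Proposition \ref{prothirdphase} tells us that each such split occurs with marginal probability $1-\bar{q}_2+O(\sqrt{\eps})$. Combining disjointness with these independent Bernoulli trials produces the multinomial expression for $(|R2^{(3,ga)}|_d,|NR^{(3)}|_d)$, up to the same $O(\sqrt{\eps})$ error (which I expect to be absorbable into an $O(\eps)$ error by the same refinement used in Section \ref{secondphase}).

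For the first phase, I would work under $\P^{(1)}$ with $\tilde{N}$ and extend the four-scenario analysis of Proposition \ref{prop:1phase_probg1} jointly to the $d$ lineages. An inductive conditioning argument on $i$ is natural: write $\P^{(1)}(k(1)=k_1,\ldots,k(d)=k_d)$ as a telescoping product and show that, for every $i$, the distribution of $k(i)$ conditioned on $k(1),\ldots,k(i-1)$ differs from the marginal given by Proposition \ref{prop:1phase_probg1} by at most $O(\eps)$. The knowledge of $k(1),\ldots,k(i-1)$ specifies the outcome of at most a bounded (in $d$) number of recombination events in the first-phase history, and one has to argue that the Poisson clocks relevant to lineage $i$ are essentially unaltered by this conditioning. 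As in the proof of Proposition \ref{prop:1phase_probg1}, this reduces to Poisson thinning arguments together with the approximation of the early $a$-population by a supercritical birth--death tree driven by independent clocks.

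The main obstacle is precisely this last step for the first phase. During the supercritical branching phase the $a$-lineages of two uniformly sampled individuals do share a common ancestor strictly inside $[0,\tilde{T}_\eps^K]$, and a priori any recombination on the common backward segment would propagate to both samples and destroy the independence. The quantitative content of the argument should be that, conditionally on $J^K(1)<\infty$, a uniform $d$-sample among the $\lfloor\eps K\rfloor$ surviving mutants has, with probability $1-O(\eps)$, $d$ distinct early ``founding ancestors'' (alive at the time when the $a$-population first reaches some intermediate size $N_0\gg d$ but $\ll \eps K$), so that the portions of the $d$ lineages covering almost all the relevant time (of order $\log K$) are pairwise disjoint. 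Once this spine/founders decomposition is in place, the joint analysis reduces to $d$ essentially independent copies of the single-sample study carried out for Proposition \ref{prop:1phase_probg1}, and the multinomial formula follows up to the desired $c\eps$ error.
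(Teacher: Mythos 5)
Your third-phase argument matches the paper's (which simply adapts Lemma 5.2 and Proposition 2.6 of \cite{schweinsberg2005random}, using that two lineages do not coalesce during that phase), and your identification of the shared-ancestry problem in the first phase is the right instinct: the paper's version of your ``founders decomposition'' is Lemma 7.3 of \cite{smadi2014eco}, which says that with high probability no pair of lineages coalesces and then recombines backwards in time, so recombinations on shared genealogical segments are negligible. However, your first-phase plan has a genuine gap: disjointness of the genealogies is not sufficient for independence, because all $d$ lineages live in the \emph{same} random environment, namely the trait path $((\tilde{N}_A,\tilde{N}_a)(\tau_m^K))_m$. The per-birth recombination probabilities $p^{(j)}_{aA}$ and the number of births a lineage experiences are functionals of this common path, so even with pairwise disjoint lineages the recombination indicators are only \emph{conditionally} i.i.d.\ given the path, with a random success probability $F_1=\P(R(i,1)\geq 0\mid \mathcal{F},T_\eps^K\leq S_\eps^K)$. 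Your telescoping step --- that the law of $k(i)$ given $k(1),\dots,k(i-1)$ is within $O(\eps)$ of the marginal --- is precisely where this bites: conditioning on the outcomes of earlier lineages tilts the law of the path, and nothing in your plan controls that tilt. The paper closes this by writing the joint law as the de Finetti mixture $\binom{d}{j}\E[F_1^j(1-F_1)^{d-j}]$ and then proving that $F_1$ concentrates, $\var(F_1)\leq c\eps$, via the covariance bounds on the upcrossing numbers $U_k^K(1)$ (Equation \eqref{covUmlk} and Equations (7.21)--(7.26) of \cite{smadi2014eco}). Without that concentration estimate the multinomial formula with error $c\eps$ does not follow.

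A second, smaller omission: the target formula involves the split between $R12(i)^{(1)}$ and $R1|2(i)^{(1,ga)}$ (the factor $q_3^{m_3}(1-q_1-q_3)^{m_4}$), i.e.\ the conditional law of $R(1|2)$ given $R(1)$. The paper handles this with a separate device --- an alternative, law-preserving construction in which the $N1$--$N2$ recombinations are ``added'' sequentially lineage by lineage after the trait process and the $SL$--$N1$ recombinations have been built, together with the negligibility of the events $NC(j,k)^c$ --- and your proposal does not say how the joint four-scenario analysis would factorize at this level. You would need either this sequential construction or an equivalent argument to justify the product form over $\{R12, R1|2\}$ within the set of lineages whose $N1$ locus escapes the sweep.
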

The Proposition \ref{proindi} is a key result: we only need to focus on individual neutral genealogies to get general results
on the genealogy of a $d$-sample with respect to the neutral loci.
 It will be proven in Section \ref{proofindep}.

\subsection{Proof of Theorem \ref{mainresult}}

Let $i$ be an individual sampled uniformly at the end of the sweep.
The idea of the proof is the following: in a first step, we list certain compositions of coalescent and recombination events leading to
specific ancestral relationships {which could be described by blocks
of a partition of $\Delta_d$}. Then we approximate the probabilities of the described events and prove that these probabilities sum to one up to a constant times $\sqrt{\eps}$
for some fixed small $\eps$. This shows that in the limit for large $K$ the
neutral genealogy of the individual $i$ belongs to those described on page \pageref{defgenealogy} with a probability close to one.
In a second step we use Proposition \ref{proindi} to treat the neutral genealogies
of the $d$ sampled individuals independently.

\begin{enumerate}
 \item[i)] We consider two possible trajectories such that the alleles at both neutral loci originate from the mutant:
either the two neutral loci separate inside the $a$-population during the third phase and coalesce during the first phase,
 or they stay in the $a$-population and do not separate during the whole sweep (see individual $1$ in Figure
\ref{schemagen}):

\begin{multline*}
 \Big(R2(i)^{(3,ga)}\cap NR(i1)^{(2)} \cap NR(i2)^{(2)} \cap NC(i1,i2)^{(2)} \cap   [ NR(i1)^{(1)}\sqcup R2(i1)^{(1)}]  \cap NR(i2)^{(1)}\Big)\\
\bigsqcup \Big(NR(i)^{(3)} \cap NR(i)^{(2)} \cap NR(i)^{(1)} \Big),
\end{multline*}
where {$\sqcup$ is the disjoint union and} we denote by $i1$ and $i2$ the labels of the parents of the first and second neutral loci of $i$, respectively,
 at the end of the second phase (the way we label the $a$-individuals has no
importance as they are exchangeable).
 \item[ii)] We consider two possible trajectories such that $(i,1)$ originates from the mutant and $(i,2)$ originates
 from some $A$-individual
\begin{multline*}  \Big(R2(i)^{(3,ga)}\cap  NR(i1)^{(2)} \cap NR(i2)^{(2)} \cap NC(i1,i2)^{(2)} \cap  [NR(i1)^{(1)}\cup R2(i1)^{(1)}] \\
 \cap [ R12(i2)^{(1)}\sqcup R1|2(i2)^{(1)}\sqcup R2(i2)^{(1)}] \Big)
\bigsqcup \Big(NR(i)^{(3)} \cap NR(i)^{(2)} \cap R2(i)^{(1)} \Big).\end{multline*}
The first bracket considers a separation of the two neutral loci during the third phase.
As a consequence, the fate of the first neutral locus of individual $i2$ during the first phase has no consequence on the neutral genealogy of $i$.
This is why we consider the event $\{R12(i2)^{(1)}\sqcup R1|2(i2)^{(1)}\sqcup R2(i2)^{(1)}\}$ and not only $\{ R2(i2)^{(1)} \}$.
The second bracket corresponds to individual $2$ in Figure
\ref{schemagen}.
\item[iii)] We consider one possible trajectory such that $(i,1)$ originates from some $A$-individual and $(i,2)$ originates
from the mutant (see individual $5$ in Figure
\ref{schemagen})
$$   R2(i)^{(3,ga)}\cap  NR(i1)^{(2)} \cap NR(i2)^{(2)} \cap NC(i1,i2)^{(2)}
\cap  [R12(i1)^{(1)}\sqcup R1|2(i1)^{(1,ga)}] \cap  NR(i2)^{(1)}$$
 \item[iv)] We consider one possible trajectory such that $(i,1)$ and $(i,2)$ originate from the same $A$-individual
 (see individual $3$ in Figure
\ref{schemagen})
$$ NR(i)^{(3)} \cap NR(i)^{(2)} \cap R12(i)^{(1)} $$
 \item[v)] Finally, we consider two possible trajectories such that $(i,1)$ and $(i,2)$ originate from
distinct $A$-individuals (see individual $4$ in Figure
\ref{schemagen} for the second bracket):
\begin{multline*}  \Big(R2(i)^{(3,ga)}\cap NR(i1)^{(2)} \cap NR(i2)^{(2)} \cap NC(i1,i2)^{(2)}\cap  [R12(i1)^{(1)}\sqcup R1|2(i1)^{(1)}] \\
\cap [ R12(i2)^{(1)}\sqcup R1|2(i2)^{(1)}\cup R2(i2)^{(1)}] \Big)
\bigsqcup \Big(NR(i)^{(3)}  \cap NR(i)^{(2)} \cap R1|2(i)^{(1,ga)} \Big).\end{multline*}
\end{enumerate}
\vspace{.5cm}

Thanks to \eqref{A6frommywork}, and \eqref{diffprobaphase3} to \eqref{couplage2.2} we know that for all non negligible
measurable events $C^{(1)}$, $C^{(2)}$ and $C^{(3)}$ occurring during the first, second and third phase respectively,
{
\begin{equation}\label{decompoC123}
 \P( C^{(1)}, C^{(2)}, C^{(3)}, \text{Fix}^K )=
\P( C^{(1)}, C^{(2)}, C^{(3)}, \mathcal{N}_\eps^K,\{T_0^{(K,A)}<T_\eps^{(K,A)} \wedge S_\eps^{(K,A)}\}) +O_K(\eps)
\end{equation}
where $O_K(\eps)$ is a function of $K$ and $\eps$ satisfying
\begin{equation}\label{defOKeps}\limsup_{K \to \infty}|O_K(\eps)|\leq c \eps,\end{equation}
for $\eps\leq \eps_0$
where $\eps_0$ and $c$ are finite.
Using the same inequalities we can decompose the right hand side of \eqref{decompoC123} as follows
\begin{multline*}
 \P(C^{(1)},\{T_\eps^K<\infty\})+\P(C^{(2)},\{  \textstyle{\frac{N_A(T_\eps^K+t_\eps)}{K} }
\in [\omega_1,\omega_2],|\textstyle{\frac{N_a(T_\eps^K+t_\eps)}{K}} -\bar{n}_a| \leq \eps  \}|C^{(1)},\{T_\eps^K\leq S_\eps^K\})\\
+ \P(C^{(3)},\{T_0^{(K,A)}<T_\eps^{(K,A)} \wedge S_\eps^{(K,A)}\}|C^{(1)},C^{(2)}, \mathcal{N}_\eps^K)+O_K(\eps).
\end{multline*}
Then from \eqref{couplage2.1} we get
$$ \P(C^{(1)},\{T_\eps^K<\infty\})=\P^{(1)}(\tilde{C}^{(1)})\P(T_\eps^K<\infty)+O_K(\eps) ,$$
from \eqref{boundsecondphase}
$$ \P(C^{(2)},\{  \textstyle{\frac{N_A(T_\eps^K+t_\eps)}{K} }
\in [\omega_1,\omega_2],|\textstyle{\frac{N_a(T_\eps^K+t_\eps)}{K}} -\bar{n}_a| \leq \eps  \}|C^{(1)},\{T_\eps^K\leq S_\eps^K\})=
\P(C^{(2)}
|C^{(1)},\{T_\eps^K\leq S_\eps^K\})+O_K(\eps),$$
and from \eqref{diffprobaphase3} and \eqref{couplage2.2}
$$  \P(C^{(3)},\{T_0^{(K,A)}<T_\eps^{(K,A)} \wedge S_\eps^{(K,A)}\}|C^{(1)},C^{(2)}, \mathcal{N}_\eps^K)=
\P^{(3)}(\tilde{\tilde{C}}^{(3)}|C^{(1)}, C^{(2)})+O_K(\eps),$$
where $\tilde{C}^{(1)}$ (resp. $\tilde{\tilde{C}}^{(3)}$) corresponds to the event $C^{(1)}$ (resp. $C^{(3)}$)
 expressed in terms of the process $\tilde{N}$
(resp. $\tilde{\tilde{N}}$).
Putting everything together we finally obtain
\begin{equation}\label{decomproba123}
 \P( C^{(1)}, C^{(2)}, C^{(3)}| \text{Fix}^K )=
\P^{(3)}(\tilde{\tilde{C}}^{(3)}|C^{(1)}, C^{(2)})\P( C^{(2)}|C^{(1)},\{T_\eps^K\leq S_\eps^K\})\P^{(1)}(\tilde{C}^{(1)})+O_K(\eps).
\end{equation}}
By applying Propositions \ref{prop:1phase_probg1}, \ref{prosecondphase},  \ref{prothirdphase} and \ref{proindi}
we then can calculate the probabilities corresponding to the five sets from Definition \ref{defpartition} and get the probabilities 
$(p_k, 1 \leq k \leq 5)$ defined in \eqref{defpi}, which sum to one.
Let us detail the calculations for the case $i)$: by applying \eqref{decomproba123}, Proposition \ref{prosecondphase}
and the Markov property,
the probability to see one of the two trajectories described in $i)$ is
\begin{multline}\label{mathcalPi1}
\mathcal{P}(i,1)=  \P^{(3)}(R2(i)^{(3,ga)})   \P^{(1)}([ NR(i)^{(1)}\sqcup R2(i)^{(1)}]  \cap NR(j)^{(1)})\\
+\P^{(3)}(NR(i)^{(3)})\P^{(1)}( NR(i)^{(1)})+O_K(\eps) ,
\end{multline}
where $i$ and $j$ are two distinct individuals (exchangeability).
But thanks to Proposition \ref{proindi} we know that the neutral genealogies of individuals $i$ and $j$ are nearly independent. Hence
adding Proposition \ref{prop:1phase_probg1} leads to
$$ \P^{(1)}([ NR(i)^{(1)}\sqcup R2(i)^{(1)}]  \cap NR(j)^{(1)})=(q_1q_2+q_1(1-q_2))q_1q_2+ O_K({\eps}).$$
Applying Propositions \ref{prop:1phase_probg1} and \ref{prothirdphase} in \eqref{mathcalPi1} yields
$$ \mathcal{P}(i,1)= (1-\bar{q}_2)q_1^2q_2+\bar{q}_2 q_1q_2+ O_K(\sqrt{\eps})=p_1+ O_K(\sqrt{\eps}),$$
where we recall the definition of $p_1$ in \eqref{defpi}.\\

Finally, we get the asymptotic independence of the neutral genealogies of the $d$ sampled individuals during the first and third phases
by applying the multinomial version of the de Finetti Representation Theorem (see \cite{diniz2014simple} Chapter 4 for a simple proof) to the result of Proposition
\ref{proindi}.
The asymptotic independence during the second phase follows from  Proposition \ref{prosecondphase} as, with high probability,
nothing happens.


\section{Number of births and deaths during the selective sweep}\label{technicalsection}

 In this section we derive some results on birth and death numbers of the population processes $\tilde{N}$ and $\tilde{\tilde{N}}$,
 needed in Sections \ref{firstphase} and \ref{secondphase}
 to prove Propositions \ref{prop:1phase_probg1}, \ref{prosecondphase} and \ref{prothirdphase}.

\subsection{Coupling with supercritical birth and death processes during the first phase}

 We are interested in the dynamics of the process $\tilde{N}_a$ during the first phase, that is, before the time
$\tilde{T}^K_\eps$. The idea is to couple this process with two supercritical
birth and death processes, and deduce its
dynamics  from well known results on birth and death processes.
Recall the definition of  the rescaled invasion fitness $s$
in \eqref{probafix},
and for $\eps < S_{aA}/( 2 {C_{a,A}C_{A,a}}/{C_{A,A}}+C_{a,a} )$ define the two approximations,
\begin{equation}\label{def_s_-s_+}
s-\frac{ 2 {C_{a,A}C_{A,a}}+C_{a,a}{C_{A,A}}}{f_a{C_{A,A}}}\eps=:s_-(\eps)\leq s \leq s_+(\eps):=s
+ 2 \frac{C_{a,A}C_{A,a}}{f_aC_{A,A}}\varepsilon.
\end{equation}
Then for $t < \tilde{T}_\eps^K \wedge S_\eps^K$ the death rate of $a$-individuals in the process $\tilde{N}$ equals that of the process $N$,
 defined in \eqref{def:totalbd} and satisfies
\begin{equation}\label{ineqtxmort}
1-s_+(\eps)\leq \frac{{d}_{a}(\tilde{N}(t))}{f_a\tilde{N}_a(t)}= 1-s+\frac{C_{a,A}}{f_aK}
(\tilde{N}_A(t)-\bar{n}_AK)+\frac{C_{a,a}}{f_aK}\tilde{N}_a(t)\leq
1-s_-(\eps).
\end{equation}
For $S_\eps^K\leq t <\tilde{T}_\eps^K$, according to the definition of $\tilde{N}$ in \eqref{deftildeN},
the death rate of $a$-individuals also satisfies
\begin{equation}\label{ineqtxmort}
1-s_+(\eps)\leq \frac{{d}_{a}^K(\tilde{N}_{A}{({S_\eps^K}^-)} e_{A\underline{\mathfrak{b}\mathfrak{c}}},\tilde{N}^{(a)}(t))}{f_a\tilde{N}_a(t)}\leq
1-s_-(\eps).
\end{equation}
Hence, following Theorem 2 in \cite{champagnat2006microscopic} we can construct the processes
$Z^-_\eps$, $(\tilde{N}_A,\tilde{N}_a)$ and $Z^+_\eps$ on the same probability space such that almost surely:
\begin{equation}\label{couplage1}
 Z^-_\eps(t) \leq \tilde{N}_a(t) \leq Z^+_\eps(t), \quad \text{for all } t <  \tilde{T}^K_\eps,
\end{equation}
where for $*\in \{-,+\}$, $Z^*_\eps$ is a birth and death process with initial state $1$, and individual birth  and death rates $f_a$ and
$f_a (1-s_*(\eps))$.\\

Let $\sigma_u^K$ denote the time of the first hitting of $\lfloor u \rfloor$ by the process $\tilde{N}_a$:
\begin{equation}\label{tpssigma}
 \sigma^K_u:=\inf \{ t\geq 0, \tilde{N}_a(t)= \lfloor u \rfloor\}, \quad u \in \R_+.
\end{equation}
If for $-1<s<1$, $\tilde{Z}^{(s)}$ is a random walk with jumps $\pm 1$ where up-jumps occur
with probability $1/(2-s)$ and down-jumps with probability $(1-s)/(2-s)$, we introduce 
\begin{equation} \label{defPronde} \mathcal{P}_i^{(s)}:= \mathcal{L} \left( \tilde{Z}^{(s)} |  \tilde{Z}^{(s)}(0)=i \right), \quad i \in \N.
\end{equation}
the law of $\tilde{Z}^{(s)}$ when the initial state is $i \in \N$
and for every $\rho \in \R_+$ the stopping time
\begin{equation} \label{deftauas}
 \tau_\rho:=\inf \ \{ n \in \Z_+, \tilde{Z}^{(s)}_n= \lfloor \rho \rfloor \}.
\end{equation}

\subsection{Number of jumps of $\tilde{N}_a$ during the first phase}


\subsubsection{Expectation of the number of upcrossings}

Let us recall Equation
\eqref{deftpssauts} and consider $k<\lfloor \eps K \rfloor $. Then
the number of upcrossings from $k$ to $k+1$ during the first phase is:
\begin{equation} \label{Umjk} U^K_{k}(1):=\# \{ m , \tau_m^K< \tilde{T}_\eps^K, (\tilde{N}_a(\tau_{m}^K),\tilde{N}_a(\tau_{m+1}^K))=(k,k+1)\}, \end{equation}
where $(1)$ stands for the first phase.
Recall \eqref{compact} and \eqref{def_s_-s_+}, and introduce a real number $\lambda_\eps$
\begin{equation}\label{deflambda}
 \lambda_\eps:=(1-s_-(\eps))^3(1-s_+(\eps))^{-2},
\end{equation}
which belongs to $(0,1)$ for $\eps$ small enough. We have the following result:

\begin{lem}\label{lemmaexpup}
There exist three positive finite constants $c$, $K_0$ and $\eps_0$ such that for $K\geq K_0$ and $\eps\leq \eps_0$:
\begin{enumerate}
 \item[] If $j\leq k<\lfloor \eps K \rfloor$ and $n_A \in I_\eps^K±1$,
\begin{equation} \label{expupa2}
\Big|\E^{(1)}_{(n_A,j)}[U^K_{k}{(1)}]-\frac{1-(1-s)^{\lfloor \eps K \rfloor -k}-(1-s)^{k+1}}{s} \Big|  \leq c \eps . \end{equation}
\item[] If $k<j<\lfloor \eps K \rfloor$ and $n_A \in I_\eps^K±1$,
\begin{equation}  \label{expupa3}
 \E^{(1)}_{(n_A,j)}[U^K_{k}(1)]\leq \frac{(1-s_-(\eps))^{j-k}}{s_+(\eps)s_-^2(\eps)}.
\end{equation}
\item[] If $k'\leq k<\lfloor \eps K \rfloor$ and $n_A \in I_\eps^K±1$,
\begin{equation}\label{covUmlk}  \Big|\cov^{(1)}_{(n_A,j)}(U_{k}^{K}(1),U_{k'}^{K}(1))\Big|
 \leq  c \Big( \lambda_\eps^{(k-k')/2} +\eps\Big).
\end{equation}
\end{enumerate}
\end{lem}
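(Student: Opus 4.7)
The plan is to reduce each of the three estimates to standard first- and second-moment calculations for an honest biased random walk with up-probability $1/(2-s)$. Upcrossings depend only on the jump chain, and by \eqref{ineqtxmort} the jump chain of $\tilde N_a$ during the first phase is a nearest-neighbour walk whose (possibly state-dependent) up-probability is trapped in $[1/(2-s_+(\eps)),\,1/(2-s_-(\eps))]$; combined with the sandwich \eqref{couplage1}, this brackets every quantity of interest between the analogous quantities for $\tilde Z^{(s_-(\eps))}$ and $\tilde Z^{(s_+(\eps))}$. The difference between the formulas at $s_\pm(\eps)$ and at $s$ itself is then $O(\eps)$ because $s_\pm(\eps)=s\pm O(\eps)$ and the elementary estimate $|(1-s_\pm(\eps))^m-(1-s)^m|\le O(\eps)\,m(1-s)^{m-1}\le O(\eps)/s$ holds uniformly in $m\in\N$.

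For a pure biased walk on $\{0,1,\ldots,N\}$ with $N=\lfloor \eps K\rfloor$, set $r=1-s$ and $p=1/(2-s)$. The function $\phi(x)=r^x$ is harmonic, so optional stopping gives $\P_j(T_N<T_0)=(1-r^j)/(1-r^N)$. A first-passage decomposition $\E_j[U_k]=\P_j(T_k<T_0)\,\E_k[U_k]$ for $j\le k$, together with a geometric series argument for the number of visits to $k$ starting from $k$, yields the closed form
\[
\E_j[U_k]=\frac{(1-r^j)(1-r^{N-k})}{s(1-r^N)}.
\]
Conditioning under $\P^{(1)}$ amounts to conditioning on $\{T_N<T_0\}$; applying the strong Markov property at each upcrossing shows $\E_j[U_k\mathbf{1}_{T_N<T_0}]=\E_j[U_k]\cdot(1-r^{k+1})/(1-r^N)$, and after dividing by $\P_j(T_N<T_0)$ the $j$-dependence cancels:
\[
\E_j^{(1)}[U_k]=\frac{(1-r^{N-k})(1-r^{k+1})}{s(1-r^N)}=\frac{1-r^{N-k}-r^{k+1}+r^{N+1}}{s(1-r^N)}.
\]
Since $r^N\le(1-s)^{\eps K}$ decays super-polynomially in $K$, this is within $O(\eps)$ of $(1-r^{N-k}-r^{k+1})/s$ for $K$ large, which combined with the first-paragraph perturbation yields \eqref{expupa2}. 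The bound \eqref{expupa3} follows from the same decomposition, now with $\P_j(T_k<T_N)=(r^j-r^N)/(r^k-r^N)$ bounded by a constant times $r^{j-k}/s$ for $j>k$.

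The covariance estimate \eqref{covUmlk} is the main obstacle. My plan is a double strong Markov decomposition: for $k'<k$, I enumerate the visits of the walk to level $k'$ and, for each visit, split the future trajectory according to whether it upcrosses $k$ at least once before returning to $k'$. Each such excursion reaches $k$ with probability of order $r^{k-k'}$ (by the harmonic formula), and once it does so it contributes, by strong Markov, an independent copy of the conditional upcrossing count at $k$; re-combining with $\E[U_k]\E[U_{k'}]$ cancels the leading product and leaves a residual controlled by the square root of the one-excursion crossing probability, which is exactly $\lambda_\eps^{(k-k')/2}$ to leading order. The additive $\eps$ absorbs the perturbation between $\tilde N_a$'s state-dependent up-probability and the constant $1/(2-s)$. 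The genuine difficulty here, and the reason this step is the hardest, is that unlike the first moment the second moment does not factor through a single hitting probability, so I must simultaneously track excursion counts at both levels, the conditioning $\{T_N<T_0\}$, and the $O(\eps)$ perturbation, without letting the perturbation accumulate across the many excursions that the supercritical walk makes before reaching $\lfloor \eps K\rfloor$.
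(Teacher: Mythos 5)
For \eqref{expupa2} and \eqref{expupa3} your route is essentially the paper's: sandwich the jump chain of $\tilde{N}_a$ between the biased walks $\tilde{Z}^{(s_-(\eps))}$ and $\tilde{Z}^{(s_+(\eps))}$, recognise the conditioned upcrossing count as (bounded by) a geometric random variable whose parameter is a ratio of hitting probabilities given by \eqref{hitting_times}, and absorb the $s_\pm(\eps)$ perturbation into a $c\eps$ error. One point you gloss over: the conditioning on $\{T_N<T_0\}$ is a ratio, so a monotone sandwich of the up-probability does not directly bracket the \emph{conditional} quantities between the two pure-walk conditional quantities; the paper handles this by bounding the ratio with \emph{different} parameters in numerator and denominator, as in $q_k^{(s_+(\eps),s_-(\eps))}=\mathcal{P}^{(s_+(\eps))}_{k+1}(\tau_{\eps K}<\tau_k)/\mathcal{P}^{(s_-(\eps))}_{k+1}(\tau_{\eps K}<\tau_0)$ in \eqref{onlyonejump}--\eqref{defqks1s2}. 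Your closed forms agree with \eqref{encadexpUmjk}, so this is repairable bookkeeping rather than a conceptual problem.

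The covariance bound \eqref{covUmlk} is where the proposal breaks down. (Note first that the paper does not reprove it: it cites Equation (7.26) of \cite{smadi2014eco}.) Your decomposition rests on the claim that an excursion of the walk away from level $k'$ reaches the higher level $k$ ``with probability of order $r^{k-k'}$ by the harmonic formula''. For the \emph{supercritical} walk relevant here the harmonic formula gives $\mathcal{P}^{(s)}_{k'+1}(\tau_k<\tau_{k'})=(1-r)/(1-r^{k-k'})\geq s$, i.e.\ each upward excursion from $k'$ reaches $k$ with probability bounded below uniformly in $k-k'$ --- if your claim were true, \eqref{expupa2} itself would force $\E^{(1)}[U_k^K(1)]$ to be exponentially small in $k-k'$, which it is not. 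The exponential decorrelation actually comes from the opposite direction: once the walk has upcrossed $k$, the probability that it ever \emph{returns} to $k'$ is of order $\mu_\eps^{k-k'}$ (this is the content of Lemma \ref{excunder}), so all but $O(\mu_\eps^{k-k'})$ of the upcrossings of $k$ occur after the last visit to $k'$ and must be decoupled from $U_{k'}^K(1)$ by a last-visit (not first-visit) decomposition, still under the conditioning $\{T_N<T_0\}$. Your subsequent assertion that the residual is ``controlled by the square root of the one-excursion crossing probability, which is exactly $\lambda_\eps^{(k-k')/2}$'' is not derived from anything --- the exponent $1/2$ has to come from a Cauchy--Schwarz step or from splitting the interval $[k',k]$ at its midpoint, neither of which appears in your plan. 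As written, the covariance argument starts from a false hitting-probability estimate and would not produce \eqref{covUmlk}; you should either carry out the last-visit/excursion argument of \cite{smadi2014eco} or cite it as the paper does.
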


 \begin{proof}
The idea, {which comes} from \cite{schweinsberg2005random} and
will be used several times throughout Section \ref{technicalsection}, is to compare the number of upcrossings with
geometric random variables.
Suppose first that $j\leq k$. Then on the event $\{\tilde{T}_{\eps}^K<\infty\}$ the process $\tilde{N}_a$ necessarily jumps from
$k$ to $k+1$. Being in $k+1$, it either reaches $\lfloor \eps K \rfloor$ before $k$, or it goes back and then again from
$k$ to $k+1$ and so on. We first approximate the probability that there is only one jump from $k$ to $k+1$.  As we do not know the
value of $\tilde{N}_A$ when $\tilde{N}_a$ hits $k$ for the first time, we bound the probability
using the extreme values it can take. Recall Definitions
\eqref{tpssigma} and \eqref{deftauas}. The upper bound is derived as follows:
\begin{eqnarray} \label{onlyonejump}
 \P^{(1)}_{(n_A,j)}(U^K_{k}(1)=1)&\leq & \underset{n_A\in I_\eps^K±1}{\sup}\P^{(1)}_{(n_A,k+1)}(\tilde{T}_{\eps}^K<\sigma_{k}^K)\\
&=&\underset{n_A\in I_\eps^K±1}{\sup} \frac{\P_{(n_A,k+1)}(\tilde{T}_{\eps}^K<\sigma_{k}^K)}
{ \P_{(n_A,k+1)}(\tilde{T}_{\eps}^K<\infty)}
\leq
 q^{(s_+(\eps),s_-(\eps))}_{k},\nonumber
\end{eqnarray}
where we use \eqref{defP1} and for $(s_1,s_2) \in (0,1)^2$
\begin{equation}\label{defqks1s2} q^{(s_1,s_2)}_{k}:=\frac{\mathcal{P}_{k+1}^{(s_1)}(\tau_{  \eps K }<\tau_k)}{\mathcal{P}_{k+1}^{(s_2)}(\tau_{  \eps K }<\tau_0)} .\end{equation}
Similarly, we show that $ \P^{(1)}_{(n_A,j)}(U^K_{k}(1)=1)\geq  q^{(s_-(\eps),s_+(\eps))}_{k}$.
In the same way, we can approximate the probability that there are least three jumps from $k$ to $k+1$ knowing that there
are at least two jumps, and so on. We deduce that we can construct
two geometric random variables $G_1$ and $G_2$, possibly on an enlarged space,
with respective parameters $q^{(s_+(\eps),s_-(\eps))}_{k}\wedge 1$ and $q^{(s_-(\eps),s_+(\eps))}_{k}$ such that
\begin{equation}\label{compaU'|Uj} G_1\leq U_{k}^{K}(1) \leq G_2, \quad \text{a.s.}  \end{equation}
In particular, taking the expectation we get from \eqref{hitting_times}
\begin{multline} \label{encadexpUmjk}
  \frac{(1-(1-s_+(\eps))^{\lfloor \eps K \rfloor -k})(1-(1-s_-(\eps))^{k+1})}{s_+(\eps)(1-(1-s_-(\eps))^{\lfloor \eps K \rfloor})}
\leq \E^{(1)}_{(n_A,j)}[U^K_{k}(1)]
\\ \leq 
\frac{(1-(1-s_-(\eps))^{\lfloor \eps K \rfloor -k})(1-(1-s_+(\eps))^{k+1})}{s_-(\eps)(1-(1-s_+(\eps))^{\lfloor \eps K \rfloor})}.
\end{multline}
 According to \eqref{probafix} and \eqref{def_s_-s_+}, $0<s<1$ and
$|s_+(\eps)-s_-(\eps)|\leq (4C_{a,A}C_{A,a}+C_{a,a}C_{A,A})\eps/(f_aC_{A,A})$. Hence the last inequality and straightforward calculations
lead to
\eqref{expupa2}.\\

Let us now assume that $k<j$. Then  we have
\begin{multline*}
 \P^{(1)}_{(n_A,j)}(U^K_{k}{(1)}\geq 1)\leq \underset{n_A\in I_\eps^K±1}{\sup}\P_{(n_A,j)}^{(1)}(\sigma_k^K<\tilde{T}_\eps^K)
 =  \underset{n_A\in I_\eps^K±1}{\sup}\frac{\P_{(n_A,j)}(\tilde{T}_\eps^K<\infty|\sigma_k^K<\tilde{T}_\eps^K)
\P_{(n_A,j)}(\sigma_k^K<\tilde{T}_\eps^K)}
{\P_{(n_A,j)}(\tilde{T}_\eps^K<\infty)}\\
 \leq  \frac{\mathcal{P}_{k}^{(s_+(\eps))}(\tau_{\eps K}<\tau_0)\mathcal{P}_{j}^{(s_-(\eps))}(\tau_k<\tau_{\eps K})}
{\mathcal{P}_{j}^{(s_-(\eps))}(\tau_{\eps K}<\tau_0)} \leq \frac{(1-s_-(\eps))^{j-k}}{s_+(\eps)s_-(\eps)},
\end{multline*}
where we again used \eqref{defP1} and \eqref{hitting_times}. Moreover, the same proof as for \eqref{compaU'|Uj} leads to:
\begin{equation*}
 \E^{(1)}_{(n_A,j)}[U^K_{k}{(1)}|U^K_{k}{(1)}\geq 1]\leq   \Big(q^{(s_-(\eps),s_+(\eps))}_{k}\Big)^{-1} \leq  s_-^{-1}(\eps) ,
 \end{equation*}
where we used Equation \eqref{minqk}. This ends the proof of \eqref{expupa3}.
The last inequality, \eqref{covUmlk}, has been stated in \cite{smadi2014eco} (Equation $(7.26)$).
\end{proof}

%

\subsubsection{Expectation of hitting numbers}

Let us recall \eqref{Umjk} and introduce for $0< j\leq k<\lfloor  \varepsilon K \rfloor$
the total number of downcrossings from $k$ to $k-1$,
\begin{equation} \label{Dk} D_{k}^K(1):=\# \{m, \tau_m^K\leq \tilde{T}_\eps^K, (\tilde{N}_a(\tau_m^K),\tilde{N}_a(\tau_{m+1}^K))=(k,k-1) \},\end{equation}
and the number of hittings
of the state $k$ by the process $\tilde{N}_a$ before the time $\tilde{T}_\eps^K$:
\begin{equation} \label{Vk} {V}_{k}^K (1):=U_{k-1}^K(1)+D_{k+1}^K(1)=\# \{m ,\tau_m^K\leq \tilde{T}_\eps^K,
\tilde{N}_a(\tau_{m-1}^K)\neq k , \tilde{N}_a(\tau_m^K)=k\}.\end{equation}


Recall the definition of $\lambda_\eps \in (0,1)$ in \eqref{deflambda}. We can state the following Lemma, which will be useful to get
 bounds on the number of upcrossings of the $A$-population during the first phase (see Lemma \ref{lemespvarmathcalU}):

\begin{lem}\label{lemV}
There exist three finite constants $c$, $K_0$ and $\eps_0$ such that for $K\geq K_0$, $\eps\leq \eps_0$ and $k'< k <\lfloor \eps K \rfloor $:
 \begin{equation*}
 \Big| \E^{(1)}[{V}_{k}^K (1)] -\frac{(2-s)(1-(1-s)^{\lfloor \eps K\rfloor -k}-(1-s)^k)}{s}\Big|\leq c\eps,
 \ \ \text{and} \ \
  |\cov^{(1)}({V}_{k'}^K (1),{V}_{k}^K (1))|\leq c(\eps + \lambda_\eps^{(k-k')/2}).
 \end{equation*}
\end{lem}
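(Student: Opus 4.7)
The plan rests on a deterministic identity. On the event $\{\tilde{T}_\eps^K < \infty\}$ the process $\tilde{N}_a$ is a nearest-neighbour walk that starts at $1$ and first reaches $\lfloor \eps K \rfloor$ at time $\tilde{T}_\eps^K$; so for every $1 \le k < \lfloor \eps K \rfloor$, up- and down-crossings of the edge between $k$ and $k+1$ must strictly alternate, with the very first crossing (starting from below) and the very last crossing (ending above) both being up-crossings. Consequently,
\[
U_k^K(1) = D_{k+1}^K(1) + 1,
\]
and combining this with the definition $V_k^K(1) = U_{k-1}^K(1) + D_{k+1}^K(1)$ yields the key relation
\[
V_k^K(1) = U_{k-1}^K(1) + U_k^K(1) - 1.
\]

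For the expectation I would take expectations in this identity and apply \eqref{expupa2} of Lemma \ref{lemmaexpup} to both $U_{k-1}^K(1)$ and $U_k^K(1)$. Using $(1-s)^{\lfloor \eps K\rfloor - k + 1} + (1-s)^{\lfloor \eps K\rfloor - k} = (2-s)(1-s)^{\lfloor \eps K\rfloor - k}$ and $(1-s)^k + (1-s)^{k+1} = (2-s)(1-s)^k$, a direct cancellation gives
\[
\frac{2 - (2-s)(1-s)^{\lfloor \eps K\rfloor - k} - (2-s)(1-s)^{k}}{s} - 1 = \frac{(2-s)\bigl(1 - (1-s)^{\lfloor \eps K\rfloor - k} - (1-s)^{k}\bigr)}{s},
\]
which is the claimed formula up to an error of order $\eps$ inherited from \eqref{expupa2}.

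For the covariance, the same identity lets me expand
\[
\cov^{(1)}\bigl(V_{k'}^K(1), V_k^K(1)\bigr) = \sum_{a \in \{k'-1, k'\}} \sum_{b \in \{k-1, k\}} \cov^{(1)}\bigl(U_a^K(1), U_b^K(1)\bigr),
\]
so it is enough to bound each of the four resulting terms via \eqref{covUmlk}. The only case needing a small additional argument is $k' = k-1$, where one of the four terms reduces to $\var^{(1)}(U_{k-1}^K(1))$; since $U_{k-1}^K(1)$ is sandwiched between two geometric random variables whose parameters are bounded away from $0$ (as established in the proof of Lemma \ref{lemmaexpup}), its variance is $O(1)$, which is absorbed into $c\,\lambda_\eps^{1/2}$ using that $\lambda_\eps$ is bounded below by a positive constant depending only on $s$. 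Summing the four contributions and redefining $c$ yields the stated bound $c(\eps + \lambda_\eps^{(k-k')/2})$.

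The principal (and rather mild) obstacle is establishing the crossing identity $U_k^K(1) = D_{k+1}^K(1) + 1$ cleanly, which amounts to the elementary observation that a $\pm 1$ walk going from a point below an edge to a point above it crosses the edge upwards one more time than it crosses it downwards. Once this is in hand, the rest is pure algebra and a direct invocation of Lemma \ref{lemmaexpup}.
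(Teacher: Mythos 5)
Your proof is correct and follows essentially the same route as the paper: the crossing identity $V_k^K(1)=U_{k-1}^K(1)+U_k^K(1)-1$ under $\P^{(1)}$, followed by \eqref{expupa2} and the algebraic simplification for the expectation. For the covariance the paper simply refers back to the proof of \eqref{covUmlk} in the cited reference, whereas you derive it more self-containedly by bilinearity from the stated bound \eqref{covUmlk}, correctly handling the diagonal term $\var^{(1)}(U_{k-1}^K(1))$ via the geometric domination and the lower bound $\lambda_\eps\geq(1-s)/2>0$; this is a legitimate and arguably cleaner way to close that half.
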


\begin{proof}

Under $\P^{(1)}$ the $a$-population size goes from $1$ to $\lfloor \eps K \rfloor$, thus
the number of downcrossings from $k+1$ to $k$ is equal to the number of upcrossings from $k$ to $k+1$ minus $1$.
Adding \eqref{Vk} yields
\begin{equation*}  {V}_{k}^{K} (1)={U}_{k-1}^{K} (1)+{U}_{k}^{K} (1)-1, \quad \P^{(1)}-a.s. \end{equation*}
We get the first part of the Lemma by taking the expectation and applying \eqref{expupa2}.
The proof of the second part follows that of \eqref{covUmlk}, and once again we can find the details in the proof of
 Equation (7.26) in \cite{smadi2014eco}.
\end{proof}

\subsubsection{Number of upcrossings during an excursion above or below a given level} \label{sectionexcu}
We now focus on the number of upcrossings from $k$ to $k+1$ during an excursion above or below $l$.
Let us denote by $\sigma_l^K(1)$ the jump number of the first hitting of $l$ before the end of the first phase: for $l<\lfloor\eps K \rfloor$,
\begin{equation}\label{defsigma}
 \sigma_l^K(1):=\inf \{ m, \tau_m^K \leq \tilde{T}_\eps^K, \tilde{N}_a(\tau_m^K)=l \},
\end{equation}
and for $1\leq k,l<\lfloor \eps K \rfloor$ and $n_A \in I_\eps^K±1$,
\begin{equation}\label{defjumpexc}
 U^K_{n_A,l,k}(1):= \# \Big\{ m<\sigma_l^K(1), (\tilde{N}_a(\tau_{m}^K),\tilde{N}_a(\tau_{m+1}^K))=(k,k+1)
\Big\}. \end{equation}
Then, if we denote by $\mu_\eps$ the real number
\begin{equation}\label{defmu}
\mu_\eps:= (1-s_-(\eps))^2(1-s_+(\eps))^{-1},
\end{equation}
which belongs to $(0,1)$ for $\eps$ small enough, we can derive the following bounds:

\begin{lem}\label{excunder}
There exist three positive, finite constants $c$, $K_0$ and $\eps_0$
such that for $K\geq K_0$, $\eps\leq \eps_0$, $1\leq k<l<\lfloor \eps K \rfloor$  and  $n_A \in I_\eps^K±1$,
\begin{equation*}
  \E^{(1)}_{(n_A,k+1)}[U^K_{n_A,k,l}(1)|\sigma_k^K(1)<\infty] \vee \E^{(1)}_{(n_A,l-1)}[U^K_{n_A,l,k}(1)]\leq  c\mu_\eps^{l-k}.
 \end{equation*}
 \end{lem}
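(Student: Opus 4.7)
\emph{Strategy.} Both bounds can be obtained by the same method already used in the proof of Lemma \ref{lemmaexpup}: rewrite each conditional expectation by applying the strong Markov property at the relevant hitting time, and estimate the unconditional hitting and fixation probabilities that appear by comparison, via the coupling \eqref{couplage1}, with gambler's--ruin probabilities for the random walks $\tilde Z^{(s_\pm(\eps))}$ introduced in \eqref{defPronde}. Since $Z^-_\eps$, $\tilde N_a$ and $Z^+_\eps$ all jump by $\pm 1$, the relations $Z^-_\eps\le \tilde N_a\le Z^+_\eps$ force any downward crossing of $\tilde N_a$ to be realized also by $Z^-_\eps$ and any upward crossing to be realized also by $Z^+_\eps$; each unconditional hitting probability for $\tilde N_a$ is therefore dominated by the corresponding gambler's-ruin probability for $\tilde Z^{(s_-(\eps))}$ or $\tilde Z^{(s_+(\eps))}$.

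\emph{Second inequality.} Starting at $(n_A,l-1)$, any upcrossing from $k$ to $k+1$ requires a first visit to $k$, so by strong Markov at $\sigma_k^K(1)$,
\[ \E^{(1)}_{(n_A,l-1)}\bigl[U^K_{n_A,l,k}(1)\bigr]\le \P^{(1)}_{(n_A,l-1)}\bigl(\sigma_k^K(1)<\sigma_l^K(1)\bigr)\;\sup_{n'\in I_\eps^K}\E^{(1)}_{(n',k)}\bigl[U^K_{n',l,k}(1)\bigr]. \]
Rewriting $\P^{(1)}=\P(\cdot\mid \tilde T_\eps^K<\infty)$ as a ratio and splitting the numerator by strong Markov at $\sigma_k^K(1)$, the conditional hitting probability reduces to $\mathcal P^{(s_-(\eps))}_{l-1}(\tau_k<\tau_l)\cdot \P_{(n',k)}(\tilde T_\eps^K<\infty)/\P_{(n_A,l-1)}(\tilde T_\eps^K<\infty)\le c(1-s_-(\eps))^{l-1-k}$, the fixation ratio being controlled by monotonicity in the starting position. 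The supremum is bounded by a dominating geometric random variable whose success parameter, the probability of escaping from $k$ up to $l$ without returning to $k$, is bounded below by $cs>0$ via comparison with $Z^+_\eps$; this gives $O(1/s)=O(1)$. Combining and using $1-s_-(\eps)\le \mu_\eps$ yields the claimed bound $c\mu_\eps^{l-k}$.

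\emph{First inequality.} Starting at $(n_A,k+1)$ and conditioning on $\{\sigma_k^K(1)<\infty\}$, any upcrossing at $l$ requires first reaching $l$ before $k$. By Bayes combined with strong Markov at $\sigma_l^K(1)$ and $\sigma_k^K(1)$, the conditional probability to reach $l$ reduces (up to bounded fixation ratios) to
\[ \frac{\P_{(n_A,k+1)}(\sigma_l^K(1)<\sigma_k^K(1))\;\P_{(n',l)}(\sigma_k^K(1)<\infty)}{\P_{(n_A,k+1)}(\sigma_k^K(1)<\infty)}, \]
and the three factors are bounded respectively by $\mathcal P^{(s_+(\eps))}_{k+1}(\tau_l<\tau_k)\le cs$, by $\mathcal P^{(s_-(\eps))}_l(\tau_k<\tau_{\lfloor \eps K\rfloor})\le c(1-s_-(\eps))^{l-k}$ and from below by $1-\mathcal P^{(s_+(\eps))}_{k+1}(\tau_{\lfloor \eps K\rfloor}<\tau_k)\ge c'>0$. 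This produces a conditional probability of order $\mu_\eps^{l-k}$ to reach $l$. Once at $l$, the additional conditioning on returning to $k$ makes the walk effectively subcritical, so the expected number of upcrossings at $l$ before $\sigma_k^K(1)$ is $O(1)$ by the same geometric argument as above, and the product gives $c\mu_\eps^{l-k}$.

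\emph{Main obstacle.} The only delicate point is the rigorous treatment of the future-measurable conditionings $\{\tilde T_\eps^K<\infty\}$ and $\{\sigma_k^K(1)<\infty\}$, which cannot be handled pathwise. The remedy, standard in this setting and already exploited in the proof of Lemma \ref{lemmaexpup}, is to rewrite every conditional probability as a ratio of unconditional ones and to factorize this ratio at the appropriate stopping time via strong Markov; each resulting factor is then an unconditional hitting or fixation probability directly controlled by \eqref{couplage1} and the explicit gambler's-ruin identities for $\tilde Z^{(s_\pm(\eps))}$.
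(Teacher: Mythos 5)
Your proposal follows essentially the same route as the paper's proof: both inequalities are obtained by dominating the upcrossing count by a geometric random variable, with the occurrence and success probabilities controlled through the coupling \eqref{couplage1}, the gambler's-ruin identities \eqref{hitting_times}, and the Bayes/strong-Markov rewriting of the conditionings on $\{J^K(1)<\infty\}$ and $\{\sigma_k^K(1)<\infty\}$ --- exactly the scheme of Lemma \ref{lemmaexpup} (for the first inequality the paper simply quotes the two resulting estimates, Equations (B.5) and (B.6) of \cite{smadi2014eco}, and multiplies them).

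One step of your first inequality is stated more strongly than the available tools justify: with only the sandwich $Z^-_\eps\le\tilde N_a\le Z^+_\eps$ at hand, the expected number of upcrossings at $l$ given that $l$ is reached is not $O(1)$ but only $O\big(((1-s_-(\eps))/(1-s_+(\eps)))^{l-k}\big)$, because the conditioning ratio $\P_{l+1}(\sigma_k^K(1)<\infty)/\P_{l}(\sigma_k^K(1)<\infty)$ can only be bounded using $s_-(\eps)$ in the numerator and $s_+(\eps)$ in the denominator, and this quotient exceeds $1$ geometrically in $l-k$. This does not affect your conclusion: multiplied by the (correctly derived) occurrence bound $c(1-s_-(\eps))^{l-k}$ it gives exactly $c\mu_\eps^{l-k}$, which is precisely why $\mu_\eps$ is defined as $(1-s_-(\eps))^2/(1-s_+(\eps))$ rather than $1-s_-(\eps)$; but the ``$O(1)$ by the same geometric argument'' claim, as written, is not what the comparison delivers and should be replaced by the weaker, provable bound.
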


\begin{proof}
Equations (B.5) and (B.6) in \cite{smadi2014eco} state that for $k<l<\lfloor \eps K \rfloor$  and  $n_A \in I_\eps^K±1$,
$$ \P^{(1)}_{(n_A,k+1)}(U^K_{n_A,k,l}(1)\geq 1|\sigma_k^K(1)<\infty)\leq c(1-s_-(\eps))^{l-k},$$
and
$$ \P^{(1)}_{(n_A,k+1)}(U^K_{n_A,k,l}(1)= 1|U^K_{n_A,k,l}(1)\geq 1,\sigma_k^K(1)<\infty)
\geq c\Big(\frac{1-s_+(\eps)}{1-s_-(\eps)}\Big)^{l-k}
$$
for a finite $c$. By comparing $U^K_{n_A,k,l}(1)$ with a geometric random variable we get the first inequality.
To bound the expectation of upcrossings from $k$ to $k+1$ during an excursion below $l$ we first bound the probability to have at least one jump from $k$ to $k+1$ during
 such an excursion. By definition, $\tilde{N}_a$ necessarily hits $l-1$ during the excursion below $l$. Recall Definitions  \eqref{defP1}, \eqref{tpssigma}
and \eqref{deftauas}. Then for every $n_A$
in $I_\eps^K±1$,
\begin{eqnarray*}
 \P^{(1)}_{(n_A,l-1)}(\sigma^K_k<\sigma^K_l|\sigma^K_l<\infty)&=& \frac{{\P}_{(n_A,l-1)}(\tilde{T}_\eps^K<\infty |\sigma^K_k<\sigma^K_l)
{\P}_{(n_A,l-1)}(\sigma^K_k<\sigma^K_l)}
{{\P}_{(n_A,l-1)}(\tilde{T}_\eps^K<\infty)}\\
& \leq & \frac{\mathcal{P}^{(s_+(\eps))}_k(\tau_{\eps K}<\tau_0)\mathcal{P}^{(s_-(\eps))}_{l-1}(\tau_{k}<\tau_l)}{\mathcal{P}^{(s_-(\eps))}_{l-1}(\tau_{\eps K}<\tau_0)}
\leq  \frac{(1-s_-(\eps))^{l-k-1}}{s_-(\eps)},  \qquad \qquad \qquad
\end{eqnarray*}
where we used \eqref{hitting_times}. The next step consists in bounding the number of upcrossings from $k$ to $k+1$ during the excursion
knowing that this number is larger than one: for $n_A
\in I_\eps^K±1$,
\begin{eqnarray*}
 \P^{(1)}_{(n_A,k+1)}(\sigma_l^K<\sigma_k^K)
&=& \frac{{\P}_{(n_A,k+1)}(\tilde{T}_\eps^K<\infty |\sigma_l^K<\sigma_k^K)
{\P}_{(n_A,k+1)}(\sigma_l^K<\sigma_k^K)}
{{\P}_{(n_A,k+1)}(\tilde{T}_\eps^K<\infty)}\\
& \geq & \frac{\mathcal{P}^{(s_-(\eps))}_l(\tau_{\eps K}<\tau_0)\mathcal{P}^{(s_-(\eps))}_{k+1}(\tau_{l}<\tau_k)}{\mathcal{P}^{(s_+(\eps))}_{k+1}(\tau_{\eps K}<\tau_0)}
\geq  s_-^2(\eps),
\end{eqnarray*}
where we again used \eqref{hitting_times}. Hence on the event $\{U^K_{n_A,l,k}(1)\geq 1\}$, $U^K_{n_A,l,k}(1)$
is smaller than a geometric random variable with parameter $s_-^2(\eps)$ and we get:
\begin{eqnarray*} \E^{(1)}_{(n_A,l-1)}[U^K_{n_A,l,k}(1)]&\leq & s_-^{-2}(\eps)\P^{(1)}_{(n_A,l-1)}(U^K_{n_A,l,k}(1)\geq 1)\ \leq \
\frac{(1-s_-(\eps))^{l-k-1}}{s_-^3(\eps)},\end{eqnarray*}
which ends the proof of Lemma \ref{excunder}.
\end{proof}

\subsection{Number of jumps $\tilde{N}_A$ during the first phase}\label{subsec:bdA1p}

We introduce for $k<\lfloor \eps K \rfloor$ the number of upcrossings of the $A$-population
when the $a$-population is of size $k$:
\begin{equation} \label{mathcalUk} \mathcal{U}_{k}^K {(1)}:=\# \{m, \tau_m^K\leq \tilde{T}_\eps^K,
\tilde{N}_A(\tau_{m+1}^K)-\tilde{N}_A(\tau_m^K)=1 , \tilde{N}_a(\tau_m^K)=k \}.\end{equation}
We are now able to get bounds for the expectations and covariances of these quantities:

\begin{lem}\label{lemespvarmathcalU}
 There exist three finite constants $c$, $K_0$ and $\eps_0$ such that for $K \geq K_0$, $\eps\leq \eps_0$ and $k<\lfloor \eps K \rfloor$,
$$\label{approxsummathcalUk}
\Big| \E^{(1)}\Big[ \sum_{i=1}^k \mathcal{U}_i^K(1) \Big]- \frac{f_A\bar{n}_AK\log k}{sf_a} \Big|\leq  cK(1+ {\eps}\log k)
\quad \text{and} \quad \var^{(1)}\Big( \sum_{i=1}^k \mathcal{U}_i^K(1) \Big)\leq cK^2(1+{\eps} \log^2 k).
$$
\end{lem}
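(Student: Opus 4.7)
The strategy is to decompose $\mathcal{U}_i^K(1)$ as a random sum over the visits of $\tilde{N}_a$ to level $i$, and reduce both the expectation and the variance to quantities already controlled by Lemma \ref{lemV}. Concretely, write
\[
\mathcal{U}_i^K(1) \;=\; \sum_{\ell=1}^{V_i^K(1)} X_i^{(\ell)},
\]
where $X_i^{(\ell)}$ is the number of $A$-upcrossings during the $\ell$-th visit of $\tilde{N}_a$ at $i$. Under $\P^{(1)}$ the component $\tilde{N}_A$ stays in $I_\eps^K$, so inside a visit $\tilde{N}_A$ performs a birth-death walk with both up- and down-rates equal to $f_A \bar n_A K$ up to a factor $1+O(\eps)$, while the visit ends at the first jump of $\tilde{N}_a$, whose total rate is $(2-s)f_a i$ up to a factor $1+O(\eps)$. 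Consequently $X_i^{(\ell)}$ is distributed as $\mathrm{Bin}(G_i,1/2)$ (up to $O(\eps)$ rate perturbations) with $G_i$ geometric of parameter $\sim (2-s)f_ai/(2f_A\bar n_AK)$, so
\[
\E\bigl[X_i^{(\ell)}\bigr] \;=\; \frac{f_A \bar n_A K}{(2-s)f_a i}\bigl(1+O(\eps)\bigr), \qquad \var\bigl(X_i^{(\ell)}\bigr) \;\leq\; \frac{c K^2}{i^2}.
\]
To make this rigorous despite the random starting value of $\tilde{N}_A$ at each visit, I sandwich $\mathcal{U}_i^K(1)$ between the analogous sums obtained by freezing the $\tilde{N}_A$-rates at the two endpoints of $I_\eps^K$, for which the summands are genuinely independent and identically distributed conditional on $V_i^K(1)$.

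For the expectation, Wald's identity applied to the sandwich gives $\E[\mathcal{U}_i^K(1)] = \E[V_i^K(1)]\,\E[X_i^{(\ell)}](1+O(\eps))$. Combining with the formula $\E[V_i^K(1)] = (2-s)/s + O\bigl(\eps + (1-s)^i + (1-s)^{\lfloor\eps K\rfloor - i}\bigr)$ from Lemma \ref{lemV}, the leading term becomes $\frac{2-s}{s}\cdot\frac{f_A\bar n_AK}{(2-s)f_ai} = \frac{f_A\bar n_AK}{sf_ai}$. Summing over $i\leq k$ and using $\sum_{i=1}^k 1/i = \log k + O(1)$ together with $\sum_{i\geq 1}(K/i)(1-s)^i = O(K)$ and the analogous bound for the right boundary produces the claimed approximation $f_A\bar n_AK\log k/(sf_a)$ with error $O(K(1+\eps\log k))$.

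For the variance I decompose $\var\bigl(\sum_{i=1}^k \mathcal{U}_i^K(1)\bigr) = \sum_i \var(\mathcal{U}_i^K(1)) + 2\sum_{i<j}\cov(\mathcal{U}_i^K(1),\mathcal{U}_j^K(1))$. Wald's variance identity on the sandwiching bounds gives $\var(\mathcal{U}_i^K(1)) = \E[V_i^K(1)]\var(X_i^{(\ell)}) + \E[X_i^{(\ell)}]^2\var(V_i^K(1))$, and using $\var(V_i^K(1))\leq c$ (Lemma \ref{lemV} with $k'=k$) this is $O(K^2/i^2)$, which sums to $O(K^2)$. For the off-diagonal terms, $X_i^{(\ell)}$ and $X_j^{(m)}$ with $i\neq j$ are driven by disjoint segments of the Poisson noise and are therefore independent conditional on the visit counts and the values of $\tilde{N}_A$ at the start of each visit; hence the covariance reduces to $\E[X_i^{(\ell)}]\E[X_j^{(m)}]\,\cov(V_i^K(1),V_j^K(1))$ up to an $O(K\eps/(ij))$ error. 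Plugging in the bound $|\cov(V_i^K,V_j^K)|\leq c(\eps + \lambda_\eps^{(j-i)/2})$ from Lemma \ref{lemV}, the $\eps$-part sums to $c\eps K^2(\log k)^2$ while the geometric part sums to $O(K^2)$ via $\sum_{\ell\geq 1}\lambda_\eps^{\ell/2}\sum_i 1/(i(i+\ell)) \leq c$, giving the desired $cK^2(1+\eps\log^2 k)$. The main obstacle throughout is precisely the dependence of each $X_i^{(\ell)}$ on the random starting value of $\tilde{N}_A$ in $I_\eps^K$, which prevents a direct application of Wald's identities; this is what the sandwiching with rate-frozen birth-death processes is designed to overcome.
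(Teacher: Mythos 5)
Your proposal is correct and follows essentially the same route as the paper: the paper likewise bounds the $a$- and $A$-jump rates up to $(1\pm c\eps)$ factors, sandwiches $\mathcal{U}_k^K(1)$ between random sums $\sum_{V_k^K(1)}(G^i_{q_k(\eps)}-1)$ of geometric variables independent of the visit counts, and then combines Lemma \ref{lemV} with the geometric-sum identity (Lemma \ref{lemgeom}) and the covariance bound $|\cov^{(1)}(V_{k'}^K(1),V_k^K(1))|\leq c(\eps+\lambda_\eps^{(k-k')/2})$ to control both moments. The only cosmetic differences are your binomial-thinning description of the per-visit count and your direct variance decomposition in place of the paper's second-moment computation \eqref{majprod2}--\eqref{majprod3}.
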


\begin{proof}
 The proof is based on the comparison of the $A$- and $a$-population jump rates.
Let us first focus on the $a$-population. For $k\leq \lfloor \eps K \rfloor$ and $n_A \in I_\eps^K±1$,
\begin{align}\label{majkk+1}
   \P^{(1)}_{(n_A,k)}(\tilde{N}_a(\delta t)\neq k) &= 
\P_{(n_A,k)}(\tilde{T}_\eps^K<\infty |\tilde{N}_a(\delta t)=k\pm 1) 
\P_{(n_A,k)}(\tilde{N}_a(\delta t)=k\pm 1)/\P_{(n_A,k)}(\tilde{T}_\eps^K<\infty) \nonumber \\
&\leq 
\mathcal{P}_{k\pm 1}^{(s_+(\eps))}(\tilde{T}_\eps^K<\infty)
 \P_{(n_A,k)}(\tilde{N}_a(\delta t)=k\pm 1)/ \mathcal{P}^{(s_{-}(\eps))}_k(\tilde{T}_\eps^K<\infty) \nonumber \\
 & \leq  \frac{(1+c\eps)}{1-(1-s)^k}\Big((1-(1-s)^{k+1})f_ak+(1-s)(1-(1-s)^{k-1})(D_a+C_{aA}\bar{n}_A)k\Big) \delta t\nonumber\\
&= (1+c\eps)f_a(2-s)k \delta t ,
\end{align}
for a finite constant $c$ and $\eps$ small enough, where
 $\delta t$ is a small time step and by abuse of notation we did not indicate the $o(\delta t)$'s.
 We used the definition of $\P^{(1)}$ in \eqref{defP1} for the equality, Coupling \eqref{couplage1} for the first inequality,
\eqref{hitting_times} for the second one, and the equality $ S_{aA}=f_a-D_a - C_{a,A}\bar{n}_A $ for the last one.
Reasoning similarly we get:
\begin{equation}\label{encaderatea}
 (1-c\eps)f_a(2-s)k \delta t\leq \P^{(1)}_{(n_A,k)}(\tilde{N}_a(\delta t)\neq k).
\end{equation}
Let us now focus on the number of upcrossings of the $A$-population. The definition of $\tilde{N}$ in \eqref{deftildeN} and Bayes' Theorem yield
\begin{equation}\label{majstsA} (1-c\eps)f_A\bar{n}_AK \delta t \leq \P^{(1)}_{(n_A,k)}(\tilde{N}_A( \delta t )= n_A+1)
\leq (1+c\eps)f_A\bar{n}_AK \delta t ,
\end{equation}
for a finite $c $ and $\eps$ small enough. Indeed, from Coupling \eqref{couplage1} and Equation \eqref{hitting_times} we get the following bound, independent of $n_A$ in $I_\eps^K±1$:
$$ \frac{1-(1-s_-(\eps))^k}{1-(1-s_-(\eps))^{\lfloor \eps K \rfloor}}\leq \P_{(n_A,k)}(\tilde{T}_\eps^K <\infty )\leq
\frac{1-(1-s_+(\eps))^k}{1-(1-s_+(\eps))^{\lfloor \eps K \rfloor}}. $$

 Hence there exist two finite constants $c$ and $\eps_0$ such that for every $\eps\leq \eps_0$,
 if we introduce the parameters
\begin{equation} \label{defqkeps} \frac{1}{q^{(1)}_k(\eps)}:=1+(1-c\eps)\frac{f_A\bar{n}_AK}{(2-s)f_ak}
  \leq 1+(1+c\eps)\frac{f_A\bar{n}_AK}{(2-s)f_ak}=:\frac{1}{q^{(2)}_k(\eps)} ,\end{equation}
 we can deduce from \eqref{majkk+1} to \eqref{majstsA} that
 for $k <\lfloor \eps K \rfloor$
\begin{equation}\label{tildemajV2} \sum_{V_k^K{(1)}}\Big({G}^i_{q^{(1)}_k(\eps)}-1 \Big)\leq
{\mathcal{U}}_k^K{(1)}
\leq \sum_{V_k^K{(1)}}\Big({G}^i_{q^{(2)}_k(\eps)}-1 \Big) ,\end{equation}
where for $j \in \{1,2\}$, $({G}^i_{q^{(j)}_k(\eps)}, i \in \N)$ is a sequence of geometric random variables
with parameter
$q^{(j)}_k(\eps)$
 independent of $V_l^K{(1)}$ (defined in \eqref{Vk}) for all $l< \lfloor \eps K \rfloor$.
Hence a direct application of Lemmas \ref{lemV} and \ref{lemgeom} leads to
\begin{equation}\label{majexpmathcalUDtilde}
\Big|  \E^{(1)}\Big[  \mathcal{U}_k^K(1)\Big]-
\frac{f_A\bar{n}_AK}{sf_ak}(1-(1-s)^k-(1-s)^{\lfloor \eps K \rfloor -k})\Big| \leq c\eps \frac{K}{k}, \end{equation}
for a finite $c$ and $\eps$ small enough. This implies the first inequality of Lemma \ref{lemespvarmathcalU}.\\

Let us now bound the second moment of $ \mathcal{U}_k^K(1)$ and the expectation of
${\mathcal{U}}_k^K(1) {\mathcal{U}}_l^K(1)$ for $k\neq l$. The first upper bound follows again
from a direct application of Lemmas \ref{lemV} and \ref{lemgeom}. We get
\begin{equation}\label{majcarremathcalU} \E^{(1)}\Big[  (\mathcal{U}_k^K(1))^2\Big]\leq \E^{(1)}\Big[ \Big(\sum_{V_k^K{(1)}}{G}^i_{q^{(2)}_k(\eps)} \Big)^2\Big]
\leq \frac{2(\E^{(1)}[V_k^K{(1)}])^2}{(q^{(2)}_k(\eps))^2}\leq 2(1+c\eps)\Big(\frac{f_A\bar{n}_AK}{sf_ak} \Big)^2 ,\end{equation}
for a finite $c$ and $\eps$ small enough.
A new application of the same Lemmas yields, for $k < l <\lfloor \eps K \rfloor $
\begin{equation} \label{majprod}
  \E^{(1)}\Big[ {\mathcal{U}}_k^K(1) {\mathcal{U}}_l^K(1)\Big]
 \leq  \frac{ \E^{(1)}[ V_k^K{(1)}V_l^K{(1)} ]}{q^{(2)}_k(\eps)q^{(2)}_l(\eps)}
 \leq   c(1+\eps+  \lambda_\eps^{(l-k)/2}) \frac{(f_A\bar{n}_AK)^2}{(f_as)^2 kl} ,
\end{equation}
where we used that $\E^{(1)}[XY]=\E^{(1)}[X]\E^{(1)}[Y]+\cov^{(1)}(X,Y)$ for any real random variables $(X,Y)$.
From \eqref{tildemajV2} to \eqref{majprod} and \eqref{lemma3.5} we deduce that there exists a finite $c$ such that for $\eps$ small enough and $k <\lfloor \eps K \rfloor $,
\begin{equation} \label{majprod2}  \E^{(1)}\Big[ \Big( \sum_{i=1}^{k} \mathcal{U}_i^K(1)\Big)^2\Big]
 \leq
   (1+c\eps) \Big(\frac{f_A\bar{n}_AK\log k}{f_as }\Big)^2+cK^2 . \end{equation}
Reasoning similarly to get the lower bound, we obtain
\begin{equation} \label{majprod3} \Big| \E^{(1)}\Big[ \Big( \sum_{i=1}^{k} \mathcal{U}_i^K(1)\Big)^2\Big]
- \Big(\frac{f_A\bar{n}_AK\log k}{f_as }\Big)^2\Big| \leq  cK^2(1+\eps \log^2 k) . \end{equation}
Adding the first inequality of Lemma \ref{lemespvarmathcalU} we conclude the proof.
\end{proof}

\subsection{Coupling with subcritical birth and death processes during the third phase}\label{subsec:newprob3phase}

We couple the process $\tilde{\tilde{N}}_a$ with two subcritical birth and death processes to control
its dynamics.
We recall the definition of ${\mathcal{N}}_\eps^K$ in \eqref{defNepsK} and introduce
\begin{equation}\label{defsbar} \bar{s}:={|S_{Aa}|}/{f_A}. \end{equation}
Let us define for $\eps$ small enough,
\begin{equation}\label{def_bars_-s_+}
\bar{s}-\frac{ M''C_{A,a}}{f_A}\eps=:\bar{s}_-(\eps)<\bar{s}<
\bar{s}_+(\eps):=\bar{s}
+ \frac{C_{A,A}+M''C_{A,a}}{f_A}\varepsilon,
\end{equation}
where $M''$ has been defined just before Definition \eqref{compact2}.
Then, according to the definition of $\tilde{\tilde{N}}$ in \eqref{deftildetildeN},
 we can follow Theorem 2 in \cite{champagnat2006microscopic} and construct the processes
$Y^+_\eps$, $\tilde{\tilde{N}}$ and $Y^-_\eps$ on the same probability space such that on the event $\mathcal{N}_\eps^K$
\begin{equation}\label{couplage3}
 Y^+_\eps(t) \leq \tilde{\tilde{N}}_A(t) \leq Y^-_\eps(t), \quad \text{for all }
T_\eps^K+t_\eps\leq t< T_\eps^K+t_\eps+\tilde{\tilde{T}}_0^{(K,A)},\quad \text{a.s.},
\end{equation}
where for $*\in \{-,+\}$, $Y^*_\eps$ is a birth and death process with initial state $N_A(T_\eps^K+t_\eps)$ and
individual birth and death rates $f_A$ and  $f_A (1+\bar{s}_*(\eps))$,
and we recall that $\tilde{\tilde{T}}_0^{(K,A)}$ is the analog of ${{T}}_0^{(K,A)}$ (defined in \eqref{T0K})
for the process $\tilde{\tilde{N}}$.\\

Recall Definition \eqref{defPronde}, and let us introduce for $i \in \N$ $\mathcal{Q}_i^{(s)}= \mathcal{P}_i^{(-s)}$ and
 for $\rho \in \R_+$ the stopping time
\begin{equation} \label{defnuuas}
 \nu_\rho:=\inf \{ n \in \Z_+, \tilde{Z}^{(-s)}_n= \lfloor \rho \rfloor \}.
\end{equation}

\subsection{Number of jumps of $\tilde{\tilde{N}}_A$ during the third phase}
Similarly as in \eqref{Vk} we introduce for $1 \leq k <\lfloor \eps K \rfloor$ the random variable
$\mathcal{V}_k^K(3)$ which corresponds to the number of hittings of state $k$ by the process $\tilde{\tilde{N}}_A$ during the third phase.
Recall Definitions \eqref{defomega12}, \eqref{compact2} and \eqref{def_bars_-s_+}. We have the following approximations:

\begin{lem}\label{jumpA3}
Let $u$ be in $[\omega_1,\omega_2]$.
 There exist three finite constants $c$, $K_0$ and $\eps_0$ such that for $K \geq K_0$, $\eps\leq \eps_0$
 and $n_a$ in $J_\eps^K±1$,
if $\lfloor u K \rfloor< k<\lfloor \eps K \rfloor,$
 \begin{equation*}
\E^{(3)}_{(\lfloor u K \rfloor,n_a)}[\mathcal{V}_{k}^K(3)]
\leq
(1+c\eps)
\frac{2+\bar{s}}{\bar{s}}(1+\bar{s}_-(\eps))^{\lfloor u K \rfloor-k} ,
\end{equation*}
and if $k\leq \lfloor u K \rfloor$,
\begin{eqnarray*}
\Big|\E^{(3)}_{(\lfloor u K \rfloor,n_a)}[\mathcal{V}_{k}^K(3)]-\frac{2+\bar{s}}{\bar{s}}(1-(1+\bar{s})^{-k}
-(1+\bar{s})^{k-\lfloor \eps K \rfloor})\Big|
\leq
c\eps.
\end{eqnarray*}
\end{lem}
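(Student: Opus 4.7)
The plan is to mirror the proof of Lemma \ref{lemV} in the supercritical first-phase setting, but replacing Coupling \eqref{couplage1} with Coupling \eqref{couplage3} and the random walks $\tilde{Z}^{(s_\pm(\eps))}$ with the subcritical analogs $\tilde{Z}^{(-\bar{s}_\pm(\eps))}$ under the laws $\mathcal{Q}^{(\bar{s}_\pm(\eps))}$ (cf.\ \eqref{defnuuas}). The first step is a deterministic path decomposition. Let $\mathcal{U}_k^K(3)$ and $\mathcal{D}_k^K(3)$ denote the number of upcrossings of $\tilde{\tilde{N}}_A$ from $k$ to $k+1$ and of downcrossings from $k$ to $k-1$ during the third phase. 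Under $\P^{(3)}$ the trajectory goes from $\lfloor uK\rfloor$ to $0$ without touching $\lfloor \eps K\rfloor$; a standard up/down bookkeeping therefore yields $\mathcal{V}_k^K(3)=\mathcal{U}_k^K(3)+\mathcal{U}_{k-1}^K(3)+\mathbf{1}_{\{1\leq k\leq \lfloor uK\rfloor\}}$, so that it suffices to estimate $\E^{(3)}[\mathcal{U}_k^K(3)]$ for $k\geq 0$.

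Next, I apply the sandwich \eqref{couplage3} to write the death rate of $A$-individuals, normalised by their birth rate, in the interval $[1+\bar{s}_-(\eps),1+\bar{s}_+(\eps)]$ as long as $\tilde{\tilde{N}}_a\in J_\eps^K$, which holds throughout the third phase on the conditioning event. Exactly as in \eqref{onlyonejump}--\eqref{compaU'|Uj}, I compare $\mathcal{U}_k^K(3)$ with a geometric random variable whose parameter is the conditional probability that the $A$-population leaves the state $k+1$ downward without returning to it before extinction. Writing this conditional probability as a ratio $h(k)/h(k+1)$ of absorption probabilities (Bayes' rule as in \eqref{onlyonejump}) and bounding numerator and denominator by the explicit hitting probabilities for $\tilde{Z}^{(-\bar{s}_\pm(\eps))}$ from the Appendix \eqref{hitting_times}, I obtain that the parameter lies in
\begin{equation*}
\Big[\frac{\bar{s}_-(\eps)(1+\bar{s}_+(\eps))^{-1}}{1-(1+\bar{s}_-(\eps))^{k+1-\lfloor \eps K\rfloor}},\ \frac{\bar{s}_+(\eps)(1+\bar{s}_-(\eps))^{-1}}{1-(1+\bar{s}_+(\eps))^{k+1-\lfloor \eps K\rfloor}}\Big].
\end{equation*}

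The two cases are then treated separately. For $k\leq \lfloor uK\rfloor$, the trajectory under $\P^{(3)}$ must hit $k+1$ starting from $\lfloor uK\rfloor\geq k+1$, which happens with conditional probability $1+O(\eps)$ by \eqref{hitting_times} applied to $\tilde{Z}^{(-\bar{s}_\pm(\eps))}$ started at $\lfloor uK\rfloor$. Taking the expectation of the bracketing geometrics and using $|\bar{s}_\pm(\eps)-\bar{s}|\leq C\eps$ together with the elementary inequality $|(1+\bar{s}_\pm(\eps))^m-(1+\bar{s})^m|\leq c\eps$ valid uniformly in $|m|\leq \lfloor \eps K\rfloor$ (exactly as in the derivation of \eqref{expupa2} from \eqref{encadexpUmjk}), the $(2+\bar{s})/\bar{s}$ prefactor arises from combining $\E^{(3)}[\mathcal{U}_k^K(3)]+\E^{(3)}[\mathcal{U}_{k-1}^K(3)]+1\approx 2/\bar{s}+1=(2+\bar{s})/\bar{s}$ in the bulk regime, and the correction $1-(1+\bar{s})^{-k}-(1+\bar{s})^{k-\lfloor \eps K\rfloor}$ comes from the ratios of hitting probabilities near the two endpoints $0$ and $\lfloor \eps K\rfloor$, in perfect symmetry with \eqref{expupa2}. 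For $k>\lfloor uK\rfloor$, the trajectory must first climb from $\lfloor uK\rfloor$ up to $k$, a subcritical event whose conditional probability is, via \eqref{hitting_times}, bounded above by $\mathcal{Q}^{(\bar{s}_-(\eps))}_{\lfloor uK\rfloor}(\nu_k<\nu_0)/\mathcal{Q}^{(\bar{s}_+(\eps))}_{\lfloor uK\rfloor}(\nu_0<\nu_{\lfloor \eps K\rfloor})\leq (1+c\eps)(1+\bar{s}_-(\eps))^{\lfloor uK\rfloor-k}$, and once $k$ is reached the conditional expectation of $\mathcal{U}_k^K(3)+\mathcal{U}_{k-1}^K(3)$ is again bounded by $(1+c\eps)(2+\bar{s})/\bar{s}$ by the previous argument applied with starting point $k$.

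The main obstacle is the conditioning inherent in $\P^{(3)}$, which is a Doob $h$-transform with $h(i)=\P_i(\tilde{\tilde{T}}_0^{(K,A)}<\tilde{\tilde{T}}_\eps^{(K,A)})$ and thus changes every transition probability in a non-trivial way. This is handled uniformly by converting each conditional probability into a ratio of two unconditional hitting probabilities, which can then be sandwiched between the explicit values for $\tilde{Z}^{(-\bar{s}_\pm(\eps))}$ given in \eqref{hitting_times}; the resulting errors are of order $\eps$ by \eqref{def_bars_-s_+}, exactly as in the supercritical counterpart \eqref{onlyonejump}.
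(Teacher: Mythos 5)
Your proposal is correct and follows essentially the same route as the paper: the coupling \eqref{couplage3} with subcritical birth-and-death processes, the conversion of the conditioning under $\P^{(3)}$ into ratios of unconditional hitting probabilities bounded via \eqref{hitting_times}, a geometric-random-variable comparison for the number of visits, and a separate bound on the probability of ever reaching $k$ when $k>\lfloor uK\rfloor$ (the paper's Equation \eqref{probahitk}). The only cosmetic difference is that you first decompose $\mathcal{V}_k^K(3)$ into upcrossings in the spirit of Lemma \ref{lemV}, whereas the paper compares the hitting count directly with a geometric random variable whose parameter is the conditional probability of never returning to $k$, namely $\P^{(3)}_{(k,n_a)}(\tilde{\tilde{N}}_A(t)\leq k,\ \forall t\geq 0)$; both yield the same prefactor $(2+\bar{s})/\bar{s}$ and the same boundary corrections.
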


\begin{proof}
The proof is very similar to that of \eqref{expupa2}, hence we do not detail all the calculations and refer to the proof
of Lemma \ref{lemmaexpup}. First we consider
$\lfloor u K \rfloor< k<\lfloor \eps K \rfloor$ and  approximate under $\P^{(3)}$ the probability for $\tilde{\tilde{N}}_A$ to hit
$k$ before the extinction of the $A$-population. Indeed, if $k\leq \lfloor u K \rfloor$, we know that
$\tilde{\tilde{N}}_A$ hits $k \ \P^{(3)}$-a.s.
Let $\lfloor u K \rfloor<k<\lfloor \eps K \rfloor$. Then for every $n_a \in J_\eps^K±1$,
Equation \eqref{hitting_times} implies
\begin{equation} \label{probahitk}
 \P^{(3)}_{(\lfloor u K \rfloor,n_a)}(\tilde{\tilde{N}}_A \text{ hits }k)
\leq  \frac{\mathcal{Q}^{(\bar{s}_+(\eps))}_{k}(\nu_0<\nu_{\eps K})
\mathcal{Q}^{(\bar{s}_-(\eps))}_{\lfloor u K \rfloor}(\nu_{k}<\nu_0)}
{\mathcal{Q}^{(\bar{s}_-(\eps))}_{\lfloor u K \rfloor}(\nu_0<\nu_{\eps K})}\leq \frac{1+c\eps}
{(1+\bar{s}_-(\eps))^{k-\lfloor u K \rfloor}},
\end{equation}
for a finite $c$, $\eps$ small enough and $K$ large enough.
The second step consists in counting how many times the process $\tilde{\tilde{N}}_A$ hits $k$ during the third phase knowing that it happens
at least once. Once again we will compare
this number with geometric random variables, by approximating the probability to have only one jump.
The following inequality follows the spirit of \eqref{onlyonejump}. The only difference is that in the third phase $\tilde{\tilde{N}}_A$ is coupled with
subcritical birth
and death processes, whereas in the first phase $\tilde{N}_a$ was coupled with supercritical birth and death processes.
For every
$n_a\in J_\eps^K±1$ and $k<\lfloor \eps K \rfloor$,
\begin{multline*}
  \P^{(3)}_{(k,n_a)}(\tilde{\tilde{N}}_A(t)\leq k,
 \forall  t \geq 0 )
 \geq  \frac{\mathcal{Q}^{(\bar{s}_-(\eps))}_{k-1}(\nu_0<\nu_{k})\mathcal{Q}^{(\bar{s}_-(\eps))}_{k}(\nu_{k-1}<\nu_{k+1})}
{\mathcal{Q}^{(\bar{s}_+(\eps))}_{k}(\nu_0<\nu_{\eps K})}
\geq  \frac{(1-c\eps)\bar{s}}{(2+\bar{s})(1-(1+\bar{s})^{-k}-(1+\bar{s})^{k-\lfloor \eps K \rfloor})}.
\end{multline*}
We derive the upper bound similarly and end the proof by comparing the hitting numbers with geometric random variables.
For $\lfloor u K \rfloor <k<\lfloor \eps K \rfloor$ we have to multiply the expectation of the geometric random variables
by the probability to hit $k$ at least once, approximated in \eqref{probahitk}.
\end{proof}

\subsection{Number of births of $a$-individuals during the third phase}

Recall \eqref{defn(alpha)} and
let ${U}_{k}^K {(3)}$ be the number of births in the $a$-population during the third phase
when $\tilde{\tilde{N}}_A$ equals $k\leq \lfloor \eps K \rfloor$
\begin{multline} \label{UDk3phase} U_k^K {(3)}:=\# \{m, T_\eps^K+t_\eps<\tau_m^K\leq T_{\text{ext}}^K,
 \tilde{\tilde{N}}_A(\tau_m^K)=k , \text{ and }
\{\{\tilde{\tilde{N}}_a(\tau_{m+1}^K)-\tilde{\tilde{N}}_a(\tau_m^K)=1\}\\
\text{ or } \{\tilde{\tilde{N}}_a(\tau_{m+1}^K)=\tilde{\tilde{N}}_a(\tau_m^K),\tilde{\tilde{N}}^{(a)}(\tau_{m+1}^K)\neq \tilde{\tilde{N}}^{(a)}(\tau_m^K) \} \}.\end{multline}
We now state an approximation for the expectation of ${U}_{k}^K(3)$. We do not prove this result as it
is obtained in the same way as Lemma \ref{lemespvarmathcalU}: the birth rate of the $a$-population is close to $f_a\bar{n}_aK$,
the jump rate of the $A$-population is of order $(2+\bar{s})f_Ak$ when $\tilde{\tilde{N}}_A=k$ and the expectations of the hitting numbers
for the $A$-population
are given in Lemma \ref{jumpA3}. The only difference is that
the $A$-population size can hit values bigger than the initial value of the third phase,
$\tilde{\tilde{N}}_A(T_\eps^K+t_\eps)$. However the probabilities to hit such values
decrease geometrically (see Lemma \ref{jumpA3}) and they have a negligible influence on the final result. Thus we get

\begin{lem}\label{lemespvarU}
 There exist three finite constants $c$, $\eps_0$ and $K_0$ such that for $\eps\leq \eps_0$, $K \geq K_0$ and $k \leq \lfloor \eps K \rfloor$
$$
\Big| \E^{(3)}\Big[ \sum_{i=1}^k {U}_i^K(3) \Big]- \frac{f_a\bar{n}_aK\log k}{\bar{s}f_A} \Big|\leq  cK(1+\eps\log k)
\quad \text{and} \quad \var^{(3)}\Big( \sum_{i=1}^k {U}_i^K(3) \Big)\leq cK^2(1+\eps \log^2 K).
$$
\end{lem}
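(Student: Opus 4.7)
The plan is to mirror the proof of Lemma \ref{lemespvarmathcalU}, swapping the roles of the $A$- and $a$-populations to reflect the symmetry between the first and third phases. The essential point is that during the third phase the $a$-population remains near its equilibrium value $\bar{n}_a K$ (since $\tilde{\tilde{N}}_a \in J_\eps^K$ on the conditioning event) while $\tilde{\tilde{N}}_A$ is a near-subcritical walk sandwiched by the couplings $Y^\pm_\eps$ of \eqref{couplage3}. Consequently, when $\tilde{\tilde{N}}_A = k$, the $a$-birth rate is $(1\pm c\eps) f_a \bar{n}_a K$, while the total jump rate of $\tilde{\tilde{N}}_A$ is $(1\pm c\eps)(2+\bar{s}) f_A k$.

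Building on this, I would establish the analog of the sandwich \eqref{tildemajV2}: under $\P^{(3)}$,
$$ \sum_{\mathcal{V}_k^K(3)}\!\bigl(G^i_{q^{(1),(3)}_k(\eps)}-1\bigr) \;\leq\; U_k^K(3) \;\leq\; \sum_{\mathcal{V}_k^K(3)}\!\bigl(G^i_{q^{(2),(3)}_k(\eps)}-1\bigr), $$
where $(G^i_{q})_i$ are i.i.d.\ geometric with parameter $q$, independent of $\mathcal{V}_k^K(3)$, and
$$ \frac{1}{q^{(j),(3)}_k(\eps)} = 1 + (1\pm c\eps)\,\frac{f_a\bar{n}_a K}{(2+\bar{s}) f_A k}, \qquad j \in \{1,2\}. $$
This requires verifying, as in \eqref{majkk+1}--\eqref{majstsA}, that conditioning on $\{\tilde{\tilde{T}}_0^{(K,A)}<\tilde{\tilde{T}}_\eps^{(K,A)}\}$ alters the instantaneous transition probabilities only by a multiplicative factor $1\pm c\eps$; this uses the subcritical hitting estimate \eqref{hitting_times} applied through Coupling \eqref{couplage3} together with the fact that $\tilde{\tilde{N}}_a$ stays in $J_\eps^K$.

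For the expectation, I would split the sum at the (random) starting height $\lfloor u K\rfloor := \tilde{\tilde{N}}_A(T_\eps^K + t_\eps)$, which lies in $[\omega_1 K, \omega_2 K]$ on $\mathcal{N}_\eps^K$. For $k \leq \lfloor u K \rfloor$, Lemma \ref{jumpA3} gives $\E^{(3)}[\mathcal{V}_k^K(3)] = (2+\bar{s})/\bar{s} + O(\eps) + O((1+\bar{s})^{-k})$, so by Lemma \ref{lemgeom} from the appendix, $\E^{(3)}[U_k^K(3)] = f_a\bar{n}_a K/(\bar{s} f_A k) + O(\eps K/k)$, and summing yields the logarithmic main term plus error $cK(1+\eps\log k)$. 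For $k > \lfloor u K \rfloor$, the first bound of Lemma \ref{jumpA3} makes $\E^{(3)}[\mathcal{V}_k^K(3)]$ decay geometrically in $k-\lfloor u K\rfloor$, so summing contributes only $O(K)$, absorbed in the error. Since the bounds of Lemma \ref{jumpA3} are uniform in $u\in[\omega_1,\omega_2]$, the randomness of $u$ is harmless.

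For the variance, I would first derive the analog of Lemma \ref{lemV}, namely $|\cov^{(3)}(\mathcal{V}_{k}^K(3),\mathcal{V}_{k'}^K(3))| \leq c(\eps + \bar{\lambda}_\eps^{|k-k'|/2})$ for a suitable $\bar\lambda_\eps \in (0,1)$, by adapting the argument of \eqref{covUmlk} to the subcritical walk. The variance bound then follows exactly as in \eqref{majcarremathcalU}--\eqref{majprod3}, splitting $\var^{(3)}(\sum U_i^K(3))$ into the variance of $\sum(G^i - 1)$ sums over fixed $\mathcal{V}_i^K(3)$ (contributing $O(K^2)$ by the quadratic bound on $\E^{(3)}[\mathcal{V}_i^K(3)^2]$) plus the covariance contribution between different $k$'s (which telescopes via the geometric decay to $O(\eps K^2 \log^2 K)$). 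The main technical obstacle is the combined conditioning on the rare event $\mathcal{N}_\eps^K \cap \{\tilde{\tilde{T}}_0^{(K,A)}<\tilde{\tilde{T}}_\eps^{(K,A)}\}$; I would handle it by pushing conditioning inside via the Doob-type estimates for subcritical birth-and-death processes that underlie Section \ref{subsec:newprob3phase}, as the paper does in the first-phase treatment.
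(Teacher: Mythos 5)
Your proposal is correct and follows essentially the same route as the paper, which in fact gives no detailed proof of this lemma but only the sketch you have elaborated: mirror Lemma \ref{lemespvarmathcalU} with the roles of $A$ and $a$ exchanged, use that the $a$-birth rate is $(1\pm c\eps)f_a\bar{n}_aK$ while the $A$-jump rate is $(1\pm c\eps)(2+\bar{s})f_Ak$, invoke Lemma \ref{jumpA3} for the hitting numbers, and dispose of the levels above $\tilde{\tilde{N}}_A(T_\eps^K+t_\eps)$ via their geometric decay. Your treatment of the random starting height $\lfloor uK\rfloor$ and of the covariance terms fills in exactly the details the authors declare to be routine.
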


\section{First phase}\label{firstphase}

This section is dedicated to the proof of Proposition \ref{prop:1phase_probg1}.
We prove that there are only four different possible ancestral relationships of
the two neutral loci and calculate the probabilities for the non-negligible possibilities.

\subsection{Coalescence and recombination probabilities, negligible events}\label{subsec:RecombinationProbNegligible}
Recall Definition \ref{defcoalreco} and define, for $j \in \{1,2\}$
$$ r^*_{j}:= r_1+\mathbf{1}_{\{j=2\}}(r_2-2r_1r_2),  \quad \text{and} \quad r^*_{(1,2)}:=r_1r_2,$$
which denote the probability to have one (resp. two) recombination(s) somewhere before the locus $Nj$ (resp. before the
locus $N2$) at a birth event.

\begin{defi}\label{defpalphaalpha}
For $(\alpha,\alpha') \in \mathcal{A}^2$, $j \in \{1,2\}$ and $n=(n_A,n_a)\in \N^\mathcal{A}$ we define:
\begin{enumerate}
  \item[$p_{\alpha\alpha'}^{(c,j)}(n)$]:=
probability that two randomly chosen neutral alleles, located at locus $Nj$ and associated respectively with alleles $\alpha$ and
$\alpha'$
at time $\tau_m^K$, coalesce at this time conditionally on $(N_A,N_a)(\tau_{m-1}^K)=n$ and on the birth of an individual
carrying allele $\alpha$ at time $\tau_{m}^K$.
\item[$p_{\alpha \alpha'}^{(j)}(n)$]:= probability to have one
(and only one) recombination from the $\alpha$- into the $\alpha'$-population before locus $Nj$ conditionally on
$(N_A,N_a)(\tau_{m-1}^K)=n$ and on the birth of an individual
carrying allele $\alpha$ at time $\tau_{m}^K$.
\item[$p_{\alpha \alpha'}^{(1,2)}(n)$]:= probability
to have a double recombination under the same conditions
 \end{enumerate}
\end{defi}

Then we have the following result:

\begin{lem}\label{lempcoal}
Let $\alpha \in \mathcal{A}$, $n=(n_A,n_a)\in \N^\mathcal{A}$ such that $n_a \leq \lfloor \eps K \rfloor$, $n_A \in I_\eps^K ±1$ and $j \in \{1,2\}$. Then
there exists a finite $c$ such that,
\begin{equation*}
p_{aa}^{(c,j)}(n)=\frac{2}{n_a(n_a+1)}\Big( 1-\frac{r_j^* f_A n_A}
{f_An_A+f_an_a} \Big),\quad
p_{a A}^{(c,j)}(n) = \frac{r_j^*f_A}{(n_a+1)(f_An_A+f_an_a)} \quad \text{and} \quad   p_{A\alpha}^{(c,j)}(n)\leq \frac{c}{K^2}.
 \end{equation*}
 \end{lem}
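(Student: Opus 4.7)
The plan is to exploit Definition~\ref{defcoalreco}: coalescence at time $\tau_m^K$ between two sampled neutral alleles requires one of the two sampled individuals to be the newborn produced at $\tau_m^K$, since all other individuals carry the same neutral alleles at $\tau_{m-1}^K$ and at $\tau_m^K$. Moreover, conditionally on a birth event producing a newborn of type $\alpha$, the $SL$ allele of the newborn always comes from its mother (an $\alpha$-individual), while inspection of the four-row table of possible offspring genotypes preceding \eqref{birthrate} shows that the newborn's $Nj$ allele comes from its father (i.e.\ from a different parent than the $SL$ allele) exactly when an odd number of recombinations occur in front of $Nj$; summing the corresponding entries yields that this event has probability $r_j^\ast$. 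Whenever a recombination occurs, the father is selected among all individuals with probability proportional to fertility, so for any specific individual of type $\alpha'$ the father equals it with probability $f_{\alpha'}/(f_An_A+f_an_a)$.

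For $p_{aa}^{(c,j)}(n)$ I would decompose as follows. The probability that the newborn is among the two sampled $a$-individuals (uniform without replacement from the $n_a+1$ $a$-individuals at $\tau_m^K$) equals $2/(n_a+1)$, and the other sampled individual is then uniform among the $n_a$ pre-existing $a$-individuals. Any fixed pre-existing $a$-individual is the $Nj$-source of the newborn either as the mother (which is a specific $a$-individual with probability $1/n_a$, together with the allele originating from the mother, of probability $1-r_j^\ast$), or as the father (a specific $a$-individual with probability $f_a/(f_An_A+f_an_a)$, together with the allele originating from the father, of probability $r_j^\ast$). Putting these together,
\begin{align*}
p_{aa}^{(c,j)}(n)
&=\frac{2}{n_a+1}\cdot\frac{1}{n_a}\left[(1-r_j^\ast)+r_j^\ast\cdot\frac{f_an_a}{f_An_A+f_an_a}\right]\\
&=\frac{2}{n_a(n_a+1)}\left[1-\frac{r_j^\ast f_An_A}{f_An_A+f_an_a}\right].
\end{align*}

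For $p_{aA}^{(c,j)}(n)$ the newborn is still of type $a$, hence the sampled $a$-individual must be the newborn, which occurs with probability $1/(n_a+1)$. The sampled $A$-individual, uniform among $n_A$, is the $Nj$-source only by being the father from which the $Nj$ allele was inherited: this has probability $r_j^\ast \cdot f_A/(f_An_A+f_an_a)$ for any fixed $A$-individual. Summing over the $n_A$ possible $A$-individuals (each sampled with probability $1/n_A$) directly yields $p_{aA}^{(c,j)}(n)=r_j^\ast f_A/[(n_a+1)(f_An_A+f_an_a)]$.

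Finally, for $p_{A\alpha}^{(c,j)}(n)$ the newborn is of type $A$ and the analysis is symmetric. The case $\alpha=A$ produces a formula analogous to $p_{aa}^{(c,j)}$ with $n_A$ in place of $n_a$, which is $O(1/n_A^2)=O(1/K^2)$ since $n_A$ is of order $K$ by the hypothesis $n_A\in I_\eps^K\pm 1$. The case $\alpha=a$ produces the analog of $p_{aA}^{(c,j)}$, namely $r_j^\ast f_a/[(n_A+1)(f_An_A+f_an_a)]$, which is $O(r_j^\ast/K^2)=O(1/(K^2\log K))$ under Condition~\eqref{assrK}. Both bounds are dominated by $c/K^2$. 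No genuine obstacle arises beyond the careful bookkeeping of mother-versus-father sources; the key identity driving the compact form of the answers is $r_j^\ast=\mathbb{P}(Nj\text{ from father})$.
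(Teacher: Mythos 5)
Your derivation is correct and is essentially the computation the paper outsources: the paper's own proof simply cites Lemma 7.1 of \cite{smadi2014eco} for the two equalities, noting only that one must exclude the double-recombination case, and your odd-parity observation ($Nj$ comes from the second parent iff an odd number of recombinations occur in front of $Nj$, which has probability exactly $r_j^*$) is precisely the clean way to build that exclusion in. The bookkeeping for $p_{aa}^{(c,j)}$ and $p_{aA}^{(c,j)}$ (newborn must be one of the sampled individuals, mother uniform among the $n_a$ type-$a$ individuals, father chosen with replacement proportionally to fertility) matches the intended argument and the stated formulas. One small imprecision: for $n_A$ at distance $1$ from $I_\eps^K$, the relevant process is the coupling $\tilde{N}$ of \eqref{deftildeN}, under which the $A$-population no longer reproduces according to \eqref{birthrate} but follows the Moran process of Definition \ref{defMR}; the paper handles this case separately and gets $p_{AA}^{(c,j)}(n)=2/n_A^2$ and $p_{Aa}^{(c,j)}(n)=0$ there, rather than the ``symmetric'' formulas you invoke. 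Since both values are still bounded by $c/K^2$, your conclusion for the third part is unaffected, but the justification at the boundary should refer to the coupled dynamics rather than to the original birth rates.
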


\begin{proof}
The proof of the two equalities can be found in \cite{smadi2014eco} (Lemma 7.1) as the expression is the same for $n_A \in I_\eps^K$ or
$ dist (n_A,I_\eps^K)=1$ (where $dist$ is the canonical distance on $\R$).
The only difference is that we consider two neutral loci and have to exclude the double recombination case.
Indeed, if there are simultaneous recombinations the alleles located at SL and N2 in the newborn originate from the same parent.
The expressions of $p_{A\alpha}^{(c,j)}(n)$ in the case where $n_A \in I_\eps^K$ are also stated in \cite{smadi2014eco} (Lemma 7.1),
and from the definition of $\tilde{N}$ in \eqref{deftildeN} we get that when $ dist (n_A,I_\eps^K)=1$,
$p_{AA}^{(c,j)}(n)={2}/{n_A^2}$ and $p_{Aa}^{(c,j)}(n)=0$. This ends the proof.
\end{proof}

Next we focus on the recombination probabilities:

\begin{lem}\label{lemma:recprob}
Let $\alpha \in \mathcal{A}$, $n=(n_A,n_a) \in \N^\mathcal{A}$ such that $n_a \leq \lfloor \eps K \rfloor$, $n_A \in I_\eps^K ±1$ and
$j \in \{1,2,(1,2)\}$. Then
there exist two finite constants $c$ and $\eps_0$ such that for every $\eps\leq \eps_0$,
\begin{align*}
 p_{aa}^{(j)}(n) =\frac{r_j^* f_{a}(n_{a}-1)}{(n_a+1)(f_An_A+f_an_a)}, \quad p_{a A}^{(j)}(n)
=\frac{r_j^* f_An_A}{(n_a+1)(f_An_A+f_an_a)},
\end{align*}
\begin{equation}
  \label{rqpr2} p_{Aa}^{(j)}(n)\leq \frac{c\eps}{K\log K} \quad \text{and} \quad
 (1-c\eps )\frac{r_2}{n_A} \leq p_{AA}^{(2)}(n_A,k) \leq \frac{r_2}{n_A}, \ k \leq \lfloor \eps K \rfloor
\end{equation}
\end{lem}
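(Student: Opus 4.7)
The plan is a direct bookkeeping exercise from the decomposition of the birth rate \eqref{birthrate} into its four recombination-type summands, combined with the Moran dynamics of Definition \ref{defMR} used by the coupling $\tilde{N}$ of \eqref{deftildeN} once $n_A$ leaves $I_\eps^K$. For each bound I would follow the same pattern as in the proof of Lemma \ref{lempcoal}: condition on an $\alpha$-birth occurring at $\tau_m^K$ starting from state $n$, read off from the four summands of \eqref{birthrate} the partial rate of the specific event in question (e.g.\ ``single recombination before $N_j$ with a second parent of type $\alpha'$ distinct from the mother''), divide by the total $\alpha$-birth rate $f_\alpha n_\alpha$ to obtain the conditional probability that the birth has the required type, and then multiply by $1/(n_\alpha + 1)$ to account for sampling one specific neutral allele among the $n_\alpha+1$ alleles of type $\alpha$ present just after the jump --- only the newborn can be affected by a recombination at $\tau_m^K$.

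For $p_{aa}^{(j)}$ and $p_{aA}^{(j)}$, the $a$-dynamics in $\tilde{N}$ always obey the full rate \eqref{birthrate}: after $S_\eps^K$ the $A$-background is frozen to a single genotype $A\underline{\mathfrak{b}\mathfrak{c}}$, but the totals $n_A$ and $n_a$ entering the rate are unchanged. Aggregating the summands of \eqref{birthrate} that transport exactly the $N_j$ allele of the newborn to the second parent, I obtain the partial rate $r_j^*\,f_a n_a \cdot f_{\alpha'}(n_{\alpha'} - \mathbf{1}_{\{\alpha'=a\}})/(f_a n_a + f_A n_A)$ for an $a$-birth with a second parent of type $\alpha'$ distinct from the mother; dividing by $f_a n_a$ and multiplying by $1/(n_a+1)$ yields exactly the two claimed identities.

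The bound on $p_{Aa}^{(j)}$ would split into two sub-cases. When $n_A \in I_\eps^K$, the same accounting gives $p_{Aa}^{(j)}(n) = r_j^* f_a n_a / [(n_A+1)(f_a n_a + f_A n_A)]$; combining $n_A = \Theta(K)$, $n_a \leq \lfloor \eps K \rfloor$, and the weak-recombination assumption \eqref{assrK} (which forces $r_j = O(1/\log K)$) produces the bound $O(\eps/(K \log K))$. When $n_A$ is one step outside $I_\eps^K$, the decoupled $A$-Moran dynamics of Definition \ref{defMR} never draw the father of an $A$-birth from the $a$-population, so $p_{Aa}^{(j)}(n) = 0$ trivially.

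Finally, $p_{AA}^{(2)}$ is most transparent in the Moran regime: the second and third parents are picked uniformly with replacement from the $n_A$ $A$-individuals, the third differs from the second with probability $(n_A-1)/n_A$, the $N_1$--$N_2$ recombination occurs with probability $r_2$, and the sampling contributes a factor $1/n_A$, so $p_{AA}^{(2)}(n) = r_2(n_A-1)/n_A^2$, which lies in $[(1-1/n_A)r_2/n_A,\, r_2/n_A] \subset [(1-c\eps)r_2/n_A,\, r_2/n_A]$ since $1/n_A = O(1/K) \ll \eps$ for $K$ large. Inside $I_\eps^K$ the analogous accounting from \eqref{birthrate}, attributing to $p_{AA}^{(2)}$ only the $(1-r_1)r_2$-summand (the $r_1(1-r_2)$-summand transports both $N_1$ and $N_2$ simultaneously to the second parent and is therefore not the single-recombination event counted here), gives $(1-r_1)r_2 f_A(n_A-1)/[(n_A+1)(f_a n_a + f_A n_A)]$, and the factors $1-r_1$, $(n_A-1)/(n_A+1)$ and $f_A n_A/(f_a n_a + f_A n_A)$ are each $1 - O(\eps)$ under the standing assumptions. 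The one genuinely subtle point is interpretive rather than analytic --- matching ``single recombination before $N_j$'' in Definition \ref{defpalphaalpha} to the correct summand(s) of \eqref{birthrate}, and checking that the coupled and decoupled regimes both produce the dominant term $r_2/n_A$ up to the $c\eps$ slack --- after which each of the four estimates is a one-line arithmetic verification.
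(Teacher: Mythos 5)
Your proposal is correct and follows essentially the same route as the paper: read the conditional probabilities off the summands of \eqref{birthrate} (a factor $1/(n_\alpha+1)$ for picking the newborn, $r_j^*$ for the crossover pattern, and $f_{\alpha'}(n_{\alpha'}-\mathbf{1}_{\{\alpha'=\alpha\}})/(f_An_A+f_an_a)$ for the choice of second parent), then handle the boundary case where $n_A$ lies one step outside $I_\eps^K$ via the decoupled Moran dynamics of $\tilde N$, which give $p_{Aa}^{(j)}=0$ and $p_{AA}^{(2)}=r_2(n_A-1)/n_A^2$. Your isolation of the $(1-r_1)r_2$ summand as the sole contribution to the bound on $p_{AA}^{(2)}$ is in fact slightly more careful than the paper's own write-up, which records the intermediate formula with $r_2^*=r_1+r_2-2r_1r_2$ and is what the application to the splitting event $RA(l,i)$ actually requires.
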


\begin{proof}
The second equality is stated in \cite{smadi2014eco} Equation $(7.2)$.

Conditionally on the birth of an $a$-individual and the state of the process at the $(m-1)$-th jump, the probability of
picking the newborn when choosing an individual at random amongst the $a$-individuals is equal
to $1/(n_{a}+1)$. A recombination before the locus $Nj$ {(or before locus $N1$ and locus $N2$ if $j=(1,2)$)}
happens with probability $r_j^*$, independent of all other events. Finally, the probability that the second parent
is an $a$-individual but is different from the first parent is equal to $f_{a}(n_{a}-1)/(f_An_A + f_an_a)$. This proves the first equality.

When $n_A \in I_\eps^K$ we get similarly that $$p_{AA}^{(j)}(n) =\frac{r_j^* f_{A}(n_{A}-1)}{(n_A+1)(f_An_A+f_an_a)}\quad \text{and}\quad
p_{Aa}^{(j)}(n)
=\frac{r_j^* f_an_a}{(n_A+1)(f_An_A+f_an_a)},$$
and from the definition of $\tilde{N}$ in \eqref{deftildeN} we obtain that when $ dist (n_A,I_\eps^K)=1$,
$p_{AA}^{(2)}(n)=r_2(n_A-1)/n_A^2$ and $p_{Aa}^{(j)}(n)=0$. Condition \eqref{assrK} completes the proof.
\end{proof}

\begin{rem}\label{rem:recprob}
Let us recall the definition of $I_\eps^K$ in \eqref{compact}.
Then there exist three finite constants $c$, $\eps_0$ and $K_0$ such that for $\eps\leq \eps_0$,
$K\geq K_0$, $j \in \{1,2,(1,2)\}$, $n_A \in I_\eps^K±1$ and $k<\lfloor \eps K \rfloor$,
\begin{equation} \label{rqpr}
 (1-c\eps)\frac{r_j^*}{k+1}\leq   p_{aA}^{(j)}(n_A,k)\leq \frac{r_j^*}{k+1}\quad \text{and}\quad
 p_{aa}^{(2)}(n_A,k) \leq \frac{f_a}{f_A}\frac{r_2}{n_A}\leq \frac{c}{K\log K}.
\end{equation}
\end{rem}

Recalling the definitions of the $m$th jump time and the number of jumps in \eqref{deftpssauts}
 and \eqref{defJK1}, we define for $j \in \{1,2,(1,2)\}$, $m \in \N$
and an individual $i$ uniformly picked at the end of the first phase,
\begin{multline}\label{defassalpha}
 (\alpha ij)_{m}:=\{m \leq J^K(1) \text{ and the $j$-th locus/loci of the $i$-th individual is/are associated} \\
 \text{to an allele $\alpha$ at the $m$-th jump time}\}.
\end{multline}
The notation $(\alpha i1)_m, (\alpha' i2)_m$ here implies that the two neutral loci of individual $i$ are associated to two distinct
individuals at the $m$th jump time, for any $\alpha,\alpha'\in \mathcal{A}$.\\

To approximate the genealogy of the neutral alleles sampled at the end of the first phase we will focus on the recombinations
and coalescences which may happen during this time interval. 
Keep in mind that when looking at coalescing neutral loci, the parent's type may differ from the type of up to one child.
We first prove that we can neglect some event combinations.
Sample $2d$ distinct individuals uniformly at the end of the first phase (maximal number of ancestors for the
$2d$ neutral alleles sampled at the end of the sweep) and define:
\begin{enumerate}
 \item[$aAa$:] a neutral allele recombines from the $a$-population to the $A$-population, and then (backwards in time) back into the $a$-population
 \item[$CR$:] two neutral alleles coalesce in the $a$-population, and then (backwards in time) recombine into the $A$-population
 \item[$CA$:] two neutral alleles coalesce and at least one of them carries the allele $A$ at the time of coalescence
 \item[$2R$:] a neutral allele takes part in a double recombination (i.e. a recombination before $N1$ and a recombination
 before $N2$ at the same birth event)
\item[$R2a$:] a recombination separates the two neutral loci of an individual within the $a$-population
\end{enumerate}
We can bound the probability of these events as follows:

\begin{lem}\label{lemma_negevents}
 There exist three positive finite constants $c$, $K_0$ and $\eps_0$ such that for $\eps\leq \eps_0$ and $K\geq K_0$
$$ \P^{(1)}(aAa)+\P^{(1)}(CR)+\P^{(1)}(2R) +\P^{(1)}(R2a)\leq \frac{c}{\log K}, \quad \text{and} \quad \P^{(1)}(CA)\leq \frac{c\log K}{K}. $$
\end{lem}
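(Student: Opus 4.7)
The strategy is to bound each probability by the expected number of occurrences of the corresponding pattern along the $2d$ tracked neutral genealogies during the first phase, via Markov's inequality and union bounds. At every birth $\tau_m^K$, the per-event probabilities of the coalescence and recombination patterns are given by Lemmas \ref{lempcoal} and \ref{lemma:recprob} and by Remark \ref{rem:recprob}; summing these over the $a$- and $A$-births amounts to weighted sums over levels $k$ of $U_k^K(1)$ and $\mathcal U_k^K(1)$, whose expectations are controlled by Lemmas \ref{lemmaexpup} and \ref{lemespvarmathcalU}. Uniform sampling at time $\tilde T_\eps^K$ and exchangeability of the $a$-individuals ensure that the probabilities in Lemmas \ref{lempcoal} and \ref{lemma:recprob}, stated for randomly chosen alleles, apply to the tracked ancestors.

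\textbf{The single-birth events $R2a$, $2R$ and $CA$.} Each is a single birth event with a special configuration. For $R2a$, Remark \ref{rem:recprob} gives $p_{aa}^{(2)}(n_A,k)\le c/(K\log K)$; combined with $\sum_k\E^{(1)}[U_k^K(1)]=O(\eps K/s)$ from Lemma \ref{lemmaexpup} and a union bound over the $2d$ lineages, this yields $\P^{(1)}(R2a)=O(1/\log K)$. For $2R$ the per-birth double-recombination probability is $r_1r_2=O(1/\log^2 K)$; multiplying by the $1/(n_a+1)$ (resp.\ $1/(n_A+1)$) factor for the newborn to be a tracked ancestor and summing over $a$- (resp.\ $A$-)births using Lemmas \ref{lemmaexpup} and \ref{lemespvarmathcalU} gives $\P^{(1)}(2R)=O(r_1r_2\log K)=O(1/\log K)$. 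For $CA$, Lemma \ref{lempcoal} yields $p_{A\alpha}^{(c,j)}\le c/K^2$; with $\sum_k\E^{(1)}[\mathcal U_k^K(1)]\le cK\log K$ (Lemma \ref{lemespvarmathcalU}) and a union bound over pairs this produces $\P^{(1)}(CA)\le c\log K/K$.

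\textbf{The backward excursion $aAa$.} Since a tracked lineage starts in the $a$-population and can only sit in the $A$-population after a (backward) $a\to A$ recombination, the event $aAa$ is contained in the event that at least one (backward) $A\to a$ recombination occurs along the lineage; it is enough to estimate the expected number of such $Aa$ events. The exact formula in Lemma \ref{lemma:recprob} gives, for $n_A\in I_\eps^K\pm 1$, the refined estimate $p_{Aa}^{(j)}(n_A,k)\le c r_j^* k/K^2$. Combining with $\E^{(1)}[\mathcal U_k^K(1)]\le cK/k$ from \eqref{majexpmathcalUDtilde}, the sum $\sum_{k<\eps K}\E^{(1)}[\mathcal U_k^K(1)]\,p_{Aa}^{(j)}(n_A,k)$ telescopes to $O(\eps r_j^*)=O(1/\log K)$.

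\textbf{The two-step event $CR$ (main obstacle).} This is the most delicate case: a naive decoupling of coalescence and subsequent recombination produces only an $O(1)$ bound, since both the expected number of coalescences of a pair in the $a$-population and the expected number of $a\to A$ recombinations of a single lineage during phase 1 are of order one. The idea is to condition on the pre-birth level $k_1$ at which the coalescence occurs: a specific pair of tracked lineages coalesces at an $a$-birth with pre-level $k_1$ with probability $p_{aa}^{(c,j)}\le 2/(k_1(k_1+1))$ (Lemma \ref{lempcoal}), while conditional on this event, the resulting single lineage lives on the backward interval $[0,\tau_{m_1}]$ during which the $a$-population has only visited levels $\le k_1$, so its $a\to A$ recombination probability is at most $\sum_{k<k_1}\E^{(1)}[U_k^K(1)]\,r_j^*/(k+1)\le c r_j^*\log k_1/s$ by Lemma \ref{lemmaexpup}. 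Multiplying these two bounds, summing over $k_1$ against $\E^{(1)}[U_{k_1}^K(1)]\le c/s$, and taking the union bound over $\binom{2d}{2}$ pairs yields $\P^{(1)}(CR)\le c r_j^*\sum_{k_1\ge 1}\log k_1/k_1^2=O(r_j^*)=O(1/\log K)$, where the convergence of $\sum\log k_1/k_1^2$ is the essential ingredient that compensates the length $\log K/S_{aA}$ of phase 1.
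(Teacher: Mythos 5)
Your proposal is sound and, for the two events the paper actually proves in detail ($2R$ and $R2a$), it follows essentially the same route: the per-birth probabilities from Lemmas \ref{lempcoal}, \ref{lemma:recprob} and Remark \ref{rem:recprob} are summed against the expected upcrossing numbers $\E^{(1)}[U_k^K(1)]$ and $\E^{(1)}[\mathcal U_k^K(1)]$ of Lemmas \ref{lemmaexpup} and \ref{lemespvarmathcalU}. Where you genuinely diverge is on $aAa$, $CR$ and $CA$: the paper disposes of these in one sentence by citing Lemma 7.3 and Equation (7.19) of the one-locus reference \cite{smadi2014eco} (plus the observation that the rates for $\tilde N$ are no larger when $\mathrm{dist}(n_A,I_\eps^K)=1$), whereas you supply self-contained first-moment arguments. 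These are correct in substance and make the lemma independent of the external reference; in particular you rightly notice that for $aAa$ the crude bound $p_{Aa}^{(j)}\le c\eps/(K\log K)$ stated in Lemma \ref{lemma:recprob} only yields $O(\eps)$ against the $K\log K$ births of $A$-individuals, and that one must return to the exact formula $p_{Aa}^{(j)}(n_A,k)\asymp r_j^*k/K^2$ to recover $O(\eps r_j^*)$. Your treatment of $CR$ also isolates the right mechanism: the $1/k_1^2$ coalescence probability against the $\log k_1$ accumulated recombination rate gives a convergent sum, which is exactly what compensates the $\log K$ length of the phase.

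Two small imprecisions are worth repairing. First, in the $CR$ step the assertion that on the backward interval $[0,\tau_{m_1}]$ the $a$-population ``has only visited levels $\le k_1$'' is false: before a given upcrossing from $k_1$ to $k_1+1$ the process may have excursed above $k_1$. The missing contribution is controlled by Lemma \ref{excunder}, whose geometric decay $\mu_\eps^{k-k_1}$ gives an additional $O(r_j^*)$ uniformly in $k_1$, so the final bound is unchanged, but the excursion estimate must be invoked. Second, your bound on $CA$ only accounts for coalescences occurring at $A$-births (via $p_{A\alpha}^{(c,j)}\le c/K^2$); a $CA$ event can also occur at an $a$-birth through $p_{aA}^{(c,j)}(n)\le r_j^*f_A/((n_a+1)(f_An_A+f_an_a))$, i.e.\ when an $a$-associated and an $A$-associated allele merge. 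Summing this against $\E^{(1)}[U_k^K(1)]$ gives $O(r_j^*\log K/K)=O(1/K)$, so the stated bound $c\log K/K$ survives, but the case should be mentioned for completeness.
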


\begin{proof}
 The probabilities of events $aAa$, $CR$ and $CA$ are bounded in \cite{smadi2014eco} Lemma $7.3$ and Equation (7.19) for the process $N$.
But according to Lemmas \ref{lempcoal} and \ref{lemma:recprob} the coalescence and recombination probabilities for the process $\tilde{N}$
are very close or even smaller when $ dist (n_A,I_\eps^K)=1$ than when $N$ and $\tilde{N}$ are equal.
 Hence we just have to bound the probability of $2R$ and $R2a$.
If a neutral allele experiences a double recombination, it happens either when it is associated with an allele $a$,
or with an allele $A$. From Lemma \ref{lemma:recprob} and the fact that $r_1$ and $r_2$ are of order $1/\log K$ we get for
$k <\lfloor \eps K \rfloor$:
$$ \sup_{n_A \in I_\eps^K±1}\Big( p_{aa}^{(1,2)}+p_{aA}^{(1,2)}\Big)(n_A,k)\leq \frac{c}{(k+1)\log^2 K} \ \
\text{and}
\ \sup_{n_A \in I_\eps^K±1}\Big( p_{Aa}^{(1,2)}+p_{AA}^{(1,2)}\Big)(n_A,k)\leq \frac{c}{K\log^2 K}. $$
Recall the definitions of $U_k^K(1)$ and $\mathcal{U}_k^K(1)$ in \eqref{Umjk} and \eqref{mathcalUk} respectively. As a birth of an $\alpha$-individual is needed to have a recombination from the $\alpha$- to the $\alpha'$-population, we can bound the probability to have a double recombination by:
$$ \P^{(1)}(2R)\leq \frac{c}{\log^2 K} \E^{(1)} \Big[\sum_{k=1}^{\lfloor \eps K \rfloor -1} \Big( \frac{U_k^K(1)}{k+1}+ \frac{\mathcal{U}_k^K(1)}{K} \Big) \Big].$$
By applying inequality \eqref{expupa2} and Lemma \ref{approxsummathcalUk} we succeed in bounding $\P^{(1)}(2R)$ by a constant over $\log K$. It remains to consider the event $R2a$ of a recombination within the $a$-population. 
Define the first time (with respect to the backwards in time process) that this event happens:
\begin{align}
\begin{aligned}\label{Raa(i)}
 R^{(1)}_{aa}(i) := \sup \{ m ,&\  m \leq J^{K}(1) \text{ and both neutral loci of the $i$-th individual are}\\
&\ \text{ associated to distinct $a$-individuals at the $(m-1)$th jump},
\end{aligned}
\end{align}
where $R^{(1)}_{aa}(i)=-\infty$ if the event does not happen during the first phase of the sweep. Then,
\begin{multline*}
 \P^{(1)}(R^{(1)}_{aa}(i)\geq 0) = \sum_{l=1}^{\lfloor \eps K \rfloor -1} \P^{(1)}(R^{(1)}_{aa}(i)\geq 0, \tilde{N}_a(\tau^{K}_{R^{(1)}_{aa}(i)})=l)\\
=\sum_{l=1}^{\lfloor \eps K \rfloor -1} \sum_{m< \infty} \P^{(1)}(m \leq J^{K}(1), \tilde{N}_a(\tau^{K}_{m-1})=l,\tilde{N}_a(\tau^{K}_{m})=l+1, 
(ai1)_m,(ai2)_m,\forall m'> m: (ai12)_{m'})\\
\leq \sum_{l=1}^{\lfloor \eps K \rfloor -1} \sum_{m< \infty} \sup_{n_A\in I_\eps^K±1}\Big( p_{aa}^{(2)} (n_A,l)
 \P^{(1)}_{(n_A,l+1)}(\forall m\geq 0: (ai12)_{m})\Big) \P^{(1)}(m \leq J^{K}(1), \tilde{N}_a(\tau^{K}_{m-1})=l,\tilde{N}_a(\tau^{K}_{m})=l+1)\\
\leq \sum_{l=1}^{\lfloor \eps K \rfloor -1} \frac{c}{K\log K}\E^{(1)} [U_l^K(1)] \leq \frac{c}{\log K}, \end{multline*}
by \eqref{expupa2} and \eqref{rqpr}.
\end{proof}
To simplify the notations we will denote the union of all negligible events by
\begin{align}\label{def:negevents}
 NE := aAa \cup CR \cup CA \cup 2R  \cup R2a.
\end{align}

\subsection{The two loci of one individual separate within the $A$-population}\label{subsec:2locisep}
Having excluded events of small probability, there are exactly two ways for the
neutral alleles of an individual sampled at the end of the first phase to originate from two distinct $A$-individuals.
The two possibilities were already described on page \pageref{defgenealogy} and represented in Figure \ref{schemagendefevent}.
The ideas which are pursued in this section are similar to the ones from \cite{brink2014multsweep}, but there are extra difficulties due
to the randomness of the population size.

\subsubsection{Event $[2,1]^{rec}_{A,i}$}
 The aim of this section is to prove the following approximation:

\begin{pro}\label{lemma_21Arec}
Let $i$ be an $a$-individual sampled uniformly at the end of the first phase.
There exist two finite constants $c$ and $\eps_0$ such that for $\eps \leq \eps_0$,
$$ \underset{K \to \infty}{\limsup}\hspace{.1cm}\Big| \P^{(1)}([2,1]^{rec}_{A,i})-
\Big[ \frac{r_2}{r_1+r_2}-e^{-\frac{r_1}{s}\log \lfloor \eps K\rfloor }+\frac{r_1}{r_1+r_2}e^{-\frac{r_1+r_2}{s}\log \lfloor \eps K\rfloor} \Big]\Big|\leq c\sqrt{\eps}. $$
\end{pro}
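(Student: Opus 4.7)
The plan is to enumerate, going backwards in time, the pair of jump indices $(m_1,m_2)$ with $m_1<m_2\leq J^K(1)$ at which, respectively, the lineage of $(i,1)$ recombines into the $A$-population and the lineage of $(i,2)$ first separates from $(i,1)$ into the $A$-population. Let $k_1,k_2$ denote the $a$-population sizes just before these two upcrossings. By exchangeability of the $a$-individuals, the lineage of $i$ passes through a given upcrossing of $N_a$ from $k$ to $k+1$ with probability $1/(k+1)$; given this, Lemma \ref{lemma:recprob} together with Remark \ref{rem:recprob} give per-upcrossing conditional probabilities $(1-r_1)r_2\,f_An_A/(f_An_A+f_ak_2)\approx r_2$ for the $(i,2)$-alone recombination event at $m_2$, and $r_1\,f_An_A/(f_An_A+f_ak_1)\approx r_1$ for the solo $(i,1)$-lineage recombination at $m_1$ (using $n_A\in I_\eps^K\pm 1$, so that $f_An_A\gg f_ak$ throughout the first phase). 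All other genealogical possibilities are negligible: the events collected in Lemma \ref{lemma_negevents} contribute $O(1/\log K)$, and the event that both $A$-recombinations land on the same $A$-individual contributes $O(1/K)$.

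Conditionally on the trajectory of $(N_A,N_a)$, the recombination draws at distinct upcrossings are independent Bernoullis, so that using $\log(1-x)=-x+O(x^2)$ I obtain
\begin{equation*}
\P^{(1)}([2,1]^{rec}_{A,i})\;\approx\;\E^{(1)}\Biggl[\sum_{k_1<k_2<\lfloor\eps K\rfloor}\frac{r_1\,U_{k_1}^K(1)}{k_1+1}\cdot\frac{r_2\,U_{k_2}^K(1)}{k_2+1}\cdot\mathcal{E}_{k_1,k_2}\Biggr],
\end{equation*}
where $\mathcal{E}_{k_1,k_2}$ is the product over $k_2<k'<\lfloor\eps K\rfloor$ of the ``no-event'' factors $(1-(r_1+r_2)/(k'+1))^{U_{k'}^K(1)}$ (neither $(i,1)$ nor $(i,2)$ recombines while they are still joined) times the product over $k_1<k'\leq k_2$ of $(1-r_1/(k'+1))^{U_{k'}^K(1)}$ (no recombination of the solo $(i,1)$-lineage). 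Replacing $U_k^K(1)$ by its bulk mean $1/s+O(\eps)$ from \eqref{expupa2} gives $\mathcal{E}_{k_1,k_2}\approx\bigl((k_2+1)/\lfloor\eps K\rfloor\bigr)^{(r_1+r_2)/s}\bigl((k_1+1)/(k_2+1)\bigr)^{r_1/s}$ up to an $O(\sqrt{\eps})$ error, and the outer expectation collapses to a deterministic double sum.

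Setting $L:=\log\lfloor\eps K\rfloor$ and performing the change of variables $x=\log((k_2+1)/\lfloor\eps K\rfloor)$, $y=\log((k_1+1)/(k_2+1))$, this double sum is a Riemann sum for
\begin{equation*}
\frac{r_1r_2}{s^2}\int_{-L}^{0}e^{(r_1+r_2)x/s}\,dx\int_{-(L+x)}^{0}e^{r_1y/s}\,dy\;=\;\frac{r_2}{r_1+r_2}\bigl(1-e^{-(r_1+r_2)L/s}\bigr)-e^{-r_1L/s}\bigl(1-e^{-r_2L/s}\bigr),
\end{equation*}
which after rearrangement gives precisely the claimed limit $\frac{r_2}{r_1+r_2}-e^{-r_1L/s}+\frac{r_1}{r_1+r_2}e^{-(r_1+r_2)L/s}$.

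The main technical obstacle is justifying the replacement of $U_k^K(1)$ by its mean inside the factor $\mathcal{E}_{k_1,k_2}$, since the product ranges over $\Theta(K)$ levels and fluctuations could accumulate. I would split the level range into a bulk part and two boundary strips of size $O(\eps K)$: on the bulk, the covariance bound \eqref{covUmlk} combined with Chebyshev's inequality shows that $\sum_k U_k^K(1)\log(1-r_j/(k+1))$ concentrates around its deterministic counterpart $-(r_j/s)\log(\lfloor\eps K\rfloor/(k+1))$ with fluctuations of order $O(\sqrt{\eps})$; the boundary strips occupy an $O(\log(1/\eps))$ portion of the $\log$-scale range and contribute at most $O(\sqrt{\eps})$, matching the stated error. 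The complementary bounds on the recombination probability within the $A$-population (prohibited via $aAa\subset NE$) and on the near-coincidence of the two $A$-ancestors are controlled respectively by Lemma \ref{lemma_negevents} and by the $O(1/K)$ uniform-sampling argument mentioned above.
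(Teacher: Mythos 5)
Your overall strategy coincides with the paper's: decompose according to the $a$-population levels at which the two (backwards in time) recombinations occur, weight each level by the per-upcrossing recombination probability $r_j/(k+1)$ times the number of upcrossings $U_k^K(1)$, approximate the ``no intermediate recombination'' factors by exponentials of sums that concentrate around $(r_j/s)\log(\cdot)$ via the covariance bound \eqref{covUmlk}, and convert the resulting double sum into the stated expression by a sum-to-integral comparison (the paper's Lemma \ref{equivalent}). The paper merely packages your double sum as a product of two conditional probabilities (Lemma \ref{claim1}), with the exponential ``no-event'' factors supplied by Lemma \ref{probeventphase1}; your single displayed formula and the closed-form evaluation of the limiting double integral are equivalent to that, and the integral computation is correct.

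There is, however, one genuine gap. Your displayed approximation identifies the backwards-in-time ordering of the two recombinations ($m_1<m_2$) with the ordering of the population levels ($k_1<k_2$), and it encodes the ``nothing else happens'' condition as a product over \emph{levels} $k'>k_2$ and $k_1<k'\le k_2$. Since the path of $\tilde{N}_a$ is not monotone, neither identification is automatic: after the first backward recombination of $(i,2)$ at level $l$, the population can excurse back above $l$ before descending, so the solo $(i,1)$ recombination may occur at a level $k\ge l$; conversely, upcrossings of a level $k'\in(k_1,k_2]$ may occur \emph{before} (in backward time) the $(i,2)$ recombination, in which case they should carry the factor $1-(r_1+r_2)/(k'+1)$ rather than $1-r_1/(k'+1)$. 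The paper devotes a specific tool to exactly this point: Lemma \ref{excunder} bounds the expected number of upcrossings of $k$ during an excursion above or below $l$ by $c\mu_\eps^{|l-k|}$ with $\mu_\eps<1$, whence the total contribution of all ``out of order'' configurations is $O(r_1\sum_k \mu_\eps^{|k-l|}/k)=O(1/\log K)$; this is the first half of the proof of \eqref{542}, and it is also why the two events $NR(l,\sigma,i)$ and $NR(l,\zeta,i)$, indexed by the first and last hitting times of $l$, must both be controlled in \eqref{prob:ai1_m'}. Without this step your double-sum formula is not justified; with it, your argument closes and is essentially the paper's.
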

We first give a preliminary Lemma before proving Proposition \ref{lemma_21Arec}.
Recall \eqref{defJK1} and define for $j \in \{1,2,(1,2)\}$ and $m \in \N$,
\begin{multline}\label{def:Rij}
 R(i,j):= \sup \{m , m \leq J^K(1) \text{ and the $j$-th locus/loci of the $i$-th individual} \\
 \text{is/are associated to an allele $A$ at the $(m-1)$th jump time}\} ,
\end{multline}
the last jump (forwards in time) when the $j$-th locus/loci of the $i$-th individual belongs to the A-population
(with $\sup \emptyset=-\infty$).
 To prove Proposition \ref{lemma_21Arec} the idea is to decompose the event $[2,1]^{rec}_{A,i}$ according to the different possible $a$-population sizes when
 the first
(backwards in time) recombination between $N1$ and $N2$ occurs.
\begin{multline} \label{decotheo1}
 \P^{(1)}([2,1]^{rec}_{A,i}) = \P^{(1)}(R(i,2)>R(i,1) \geq 0)\\
= \sum_{l=1}^{\lfloor\eps K \rfloor} \P^{(1)}(R(i,1) \geq 0, R(i,2)>R(i,1),\tilde{N}_a({\tau^K_{R(i,2)}})=l) \\
= \sum_{l=1}^{\lfloor\eps K \rfloor-1} \P^{(1)}(R(i,2)>R(i,1),\tilde{N}_a(\tau^K_{R(i,2)})=l) \P^{(1)}(R(i,1) \geq 0 | R(i,2)>R(i,1),\tilde{N}_a({\tau^K_{R(i,2)}})=l).
\end{multline}
In the following Lemma, which then gives rise to the proof of Proposition \ref{lemma_21Arec}, we consider separately
the two probabilities of the above product:

\begin{lem}\label{claim1}
There exist three finite constants $c$, $K_0$ and $\eps_0$ such that for $K \geq K_0$, $\eps \leq \eps_0$ and $l<\lfloor \eps K \rfloor$,
 \begin{multline}\label{541}
\Big| \P^{(1)}(R(i,2) > R(i,1), \tilde{N}_a(\tau^K_{R(i,2)})=l)-
  r_2\frac{1-(1-s)^{\lfloor \eps K \rfloor-l}-(1-s)^{l+1}}{s(l+1)}e^{- \frac{r_1+r_2}{s}\log\frac{\lfloor \eps K \rfloor}{l}}
  \Big|\leq \frac{c\sqrt{\eps}}{l\log K}
 \end{multline}
and
 \begin{equation}\label{542} \Big| \P^{(1)}(R(i,1) \geq 0 | R(i,2)>R(i,1),N_a^K(\tau^K_{R(i,2)})=l) -
\sum_{k=1}^{l-1} \frac{r_1}{s(k+1)} e^{ -\frac{r_1}{s} \log \frac{l-1}{k} }\Big|\leq c \sqrt{\eps}.\end{equation}
\end{lem}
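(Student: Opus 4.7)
The plan is to decompose both probabilities according to the jump index $m=R(i,2)$ at which the relevant $r_2$-type recombination occurs, exploit the strong Markov property at $\tau_m^K$ to split the first phase into an independent pre-$m$ subprocess and post-$m$ subprocess, and carefully estimate the contribution of each level $l'$ to the resulting products. The recombination and coalescence probabilities from Lemmas \ref{lempcoal} and \ref{lemma:recprob}, together with the negligibility of the collection $NE$ from Lemma \ref{lemma_negevents}, reduce the analysis to counting expected numbers of upcrossings at each level weighted by per-upcrossing event probabilities, for which the estimates of Lemma \ref{lemmaexpup} are available.

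For \eqref{541}, I first write
\[ \P^{(1)}\bigl(R(i,2)>R(i,1),\ \tilde{N}_a(\tau_{R(i,2)}^K)=l\bigr)=\sum_{m\geq 1}\P^{(1)}\bigl(R(i,2)=m,\ R(i,1)<m,\ \tilde{N}_a(\tau_m^K)=l\bigr). \]
For jump $m$ to be of the correct type the jump must be an $a$-birth taking the $a$-population to level $l$, involving exactly one recombination between $N1$ and $N2$ with an $A$-individual as second parent, and the newborn must be the ancestor of $(i,2)$. By Lemma \ref{lemma:recprob} and Remark \ref{rem:recprob} this has per-upcrossing probability $\approx r_2/(l+1)$, so summing over $m$ using \eqref{expupa2} gives the prefactor $r_2(1-(1-s)^{\lfloor\eps K\rfloor-l}-(1-s)^{l+1})/(s(l+1))$. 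The additional exponential factor arises from the post-$m$ requirement that neither $(i,1)$ undergoes an $aA$-recombination (forbidden by $R(i,1)<m$) nor $(i,2)$ does (forbidden by $R(i,2)=m$ being the supremum). These bad events have per-upcrossing probabilities $\approx r_1/(l'+1)$ and $\approx r_2/(l'+1)$ respectively, and the joint probability of their absence across the post-$m$ subprocess is controlled via a product of the form $\prod_{l\leq l'<\lfloor\eps K\rfloor}(1-(r_1+r_2)/(l'+1))^{U_{l'}^K(1)}$, which upon expansion through $(1-x)^n\sim e^{-nx}$ and Lemma \ref{lemmaexpup} concentrates around $e^{-(r_1+r_2)\log(\lfloor\eps K\rfloor/l)/s}$.

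For \eqref{542} the same idea applies to the pre-$m$ subprocess. Conditionally on $R(i,2)=m$ and $\tilde{N}_a(\tau_m^K)=l$, by the strong Markov property this subprocess is distributed like a first-phase process reaching level $l-1$ before extinction. The event $\{R(i,1)\geq 0\}$ decomposes by the level $k$ at which $(i,1)$'s $aA$-recombination occurs: for each $k<l$ the contribution is $\approx r_1/(k+1)$ times the expected number of $k\to k+1$ upcrossings in the sub-subprocess (which in the bulk is $\approx 1/s$), multiplied by the probability $e^{-(r_1/s)\log((l-1)/k)}$ that no later forward $aA$-event for $(i,1)$ occurs before level $l-1$. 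This last factor is produced via a product over intermediate levels analogous to the analysis in \eqref{541}.

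The hardest step is controlling the deviation of the random sums $\sum_{l'}U_{l'}^K(1)/(l'+1)$ around their expectations, which is what allows passage from the random product $\prod(1-x_{l'})^{U_{l'}^K(1)}$ to its deterministic exponential limit. The exponentially decaying covariance bound \eqref{covUmlk} combined with the mean estimate \eqref{expupa2} produces a total standard deviation of order $\sqrt{\eps}$, which explains the $\sqrt{\eps}$ rate in \eqref{541} rather than $\eps$. Secondary technical work includes absorbing the negligible events of Lemma \ref{lemma_negevents} into the error, handling boundary corrections at levels $l'$ near $1$ or $\lfloor\eps K\rfloor$ where the bulk approximation $1/s$ for expected upcrossings fails, and tracking the conditioning on $\{T_\eps^K\leq S_\eps^K\}$ through the Markov decomposition.
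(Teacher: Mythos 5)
Your overall strategy coincides with the paper's: decompose according to the jump of the first (backwards-in-time) recombination, extract the per-birth recombination probability $r_2/(l+1)$ (resp.\ $r_1/(k+1)$) from Lemma \ref{lemma:recprob} and Remark \ref{rem:recprob}, multiply by the expected upcrossing count from \eqref{expupa2}, and produce the exponential factors by a Poisson approximation of a product over levels whose fluctuations are controlled by the covariance bound \eqref{covUmlk}. Your identification of that covariance bound as the source of the $\sqrt{\eps}$ rate is exactly right, and your treatment of \eqref{541} is essentially the paper's (which packages the exponential factor as the separate estimate \eqref{lemPNRml}).

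There is, however, one genuine gap, in \eqref{542}. Conditionally on $R(i,2)=m$ and $\tilde{N}_a(\tau_m^K)=l$, the pre-$m$ path is \emph{not} simply ``a first-phase process reaching level $l-1$ before extinction'': the jump $m$ need not be the first upcrossing to $l$, so the path may have made excursions above $l$ before $\tau_m^K$, and the $aA$-recombination of $(i,1)$ could occur at a level $k\geq l$ during such an excursion. Restricting the sum to $k<l$ therefore requires showing that these contributions are negligible; likewise, replacing the number of $k-1\to k$ upcrossings occurring \emph{before} $R(i,2)$ by the full-phase expectation $\E^{(1)}[U_{k-1}^{K}(1)]$ requires controlling the upcrossings that happen during excursions below $l$ after $R(i,2)$. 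The paper handles both points with the excursion estimates of Lemma \ref{excunder} (expected upcrossings during an excursion above or below $l$ decay like $\mu_\eps^{|k-l|}$), and it must also bracket the ``no recombination of $(i,1)$ before $\tau_m^K$'' event between the first and last hittings of $l$ (the times $\sigma_l^K(1)$ and $\zeta_l^K(1)$, leading to the uniform estimate \eqref{prob:ai1_m'}), precisely because conditioning on $\tilde{N}_a(\tau^K_{R(i,2)})=l$ does not determine how many times level $l$ has already been visited. None of this appears in your sketch, and without it the clean Markov factorization you assert does not hold as stated.
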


\begin{proof}[Proof of Proposition \ref{lemma_21Arec}]

From Lemma \ref{claim1} and Equation \eqref{decotheo1} we get the existence of a finite $c$ such that for
$K$ large enough and $\eps$ small enough,
\begin{equation}\label{firstineq}
  \P^{(1)}([2,1]^{rec}_{A,i}) \leq
  \sum_{l=1}^{\lfloor\eps K \rfloor-1}\Big[ \frac{r_2}{s(l+1)}
e^{ -\frac{r_1+r_2}{s}\log\frac{\lfloor \eps K \rfloor}{l}}+\frac{c\sqrt{\eps}}{l \log K}\Big] \Big[\sum_{k=1}^{l-1} \frac{r_1}{s(k+1)} e^{ -\frac{r_1}{s} \log  \frac{l-1}{k} }+{c\sqrt{\eps}}\Big].
\end{equation}
Rewriting the second term in brackets and applying Lemma \ref{equivalent} with $c_N/\log N=r_1/s$ yields:
\begin{eqnarray*}
  e^{-\frac{r_1}{s}\log (l-1)} \frac{r_1}{s}\sum_{k=1}^{l-1} \frac{1}{k+1} e^{\frac{r_1}{s}\log k}+{c\sqrt{\eps}}& \leq &
  e^{-\frac{r_1}{s}\log (l-1)}\Big(e^{\frac{r_1}{s}\log l}-1+c\frac{r_1}{s}\Big)+c\sqrt{\eps}\\
& \leq & 1-e^{-\frac{r_1}{s}\log l } + c\sqrt{\eps},
\end{eqnarray*}
for $K$ large enough, $\eps$ small enough and a finite $c$, whose value can change from line to line and which can be chosen independently of $l$.
We use in the last inequality Condition \eqref{assrK} which claims that $\limsup_{K \to \infty}r_1\log K<\infty$.
Including the last inequality in \eqref{firstineq} gives
\begin{eqnarray*}
  \P^{(1)}([2,1]^{rec}_{A,i}) &\leq &
  \sum_{l=1}^{\lfloor\eps K \rfloor-1} \frac{r_2}{s(l+1)}e^{-\frac{r_1+r_2}{s}\log \lfloor\eps K \rfloor}\Big(e^{\frac{r_1+r_2}{s}\log l}-e^{\frac{r_2}{s}\log l} \Big) + c\sqrt{\eps},
\end{eqnarray*}
for a finite $c$, $K$ large enough and $\eps$ small enough, where we again use \eqref{assrK} which ensures that exponential
terms are bounded away from zero and infinity in the following sense:
$$ \frac{1}{c}\leq \liminf_{K \to \infty} e^{-\frac{r_1+r_2}{s}\log \lfloor \eps K\rfloor}\leq
\limsup_{K \to \infty} e^{\frac{r_1+r_2}{s}\log \lfloor \eps K\rfloor}\leq c $$
for a positive and finite $c$. Applying again Lemma \ref{equivalent}, we get:
$$  \P^{(1)}([2,1]^{rec}_{A,i}) \leq  \Big( \frac{r_2}{r_1+r_2}-e^{-\frac{r_1}{s}\log \lfloor\eps K \rfloor}+\frac{r_1}{r_1+r_2}e^{-\frac{r_1+r_2}{s}\log \lfloor\eps K \rfloor} \Big)+c\sqrt{\eps}. $$
The lower bound is obtained in the same way. Notice that it is a little bit more involved as we need to use \eqref{lemma3.5} in addition.
\end{proof}

The end of this section is devoted to the proof of Lemma \ref{claim1}.

\begin{proof}[Proof of Equation \eqref{541}]
We can decompose the event
$ \{ R(i,2) > R(i,1), \tilde{N}_a(\tau^K_{R(i,2)})=l \} $ according to the jump number of the (backwards in time) first recombination.
Recall the definition of $NR(i)^{(1)}$ on page \pageref{defgenealogy}. We will use this event with a different
initial condition for $(\tilde{N}_A,\tilde{N}_a)$, which will not necessarily be $(\lfloor \bar{n}_AK \rfloor,1)$.
It will however still correspond to the absence of
any recombination before the end of the first phase.
We recall conventions \eqref{convP} and \eqref{defP1}.
With the definition of $(\alpha ik)_m$ in \eqref{defassalpha} we get
\begin{multline} \label{decompproba1}
 \P^{(1)}(R(i,2) > R(i,1), N_a(\tau^K_{R(i,2)})=l) \\
=\sum_{m>1} \P^{(1)}(m\leq  J^{K}(1), \tilde{N}_a(\tau_{m-1}^{K})=l-1, \tilde{N}_a(\tau_{m}^{K})=l,
 (ai1)_{m-1}, (Ai2)_{m-1}, \forall m\leq m'\leq  J^{K}(1) : (ai12)_{m'})\\
\leq \sum_{m>1} \underset{n_A \in I_\eps^K±1}{\sup} \Big\{p_{aA}^{(2)}(n_A,l-1)\P^{(1)}_{(n_A,l)}(NR(i)^{(1)}) \Big\}
\P^{(1)}( m\leq J^{K}(1), \tilde{N}_a(\tau_{m-1}^{K})=l-1, \tilde{N}_a(\tau_{m}^{K})=l)  \\
 = \underset{n_A \in I_\eps^K±1}{\sup} \Big\{p_{aA}^{(2)}(n_A,l-1)\P^{(1)}_{(n_A,l)}(NR(i)^{(1)}) \Big\}\E^{(1)}[U_{l-1}^K(1)],
\end{multline}
and the same expression with the infimum on $n_A \in I_\eps^K±1$ for a lower bound.
Adding \eqref{rqpr} and \eqref{lemPNRml} yields,
\begin{eqnarray*}
 \P^{(1)}(R(i,2) > R(i,1), \tilde{N}_a(\tau^K_{R(i,2)})=l)
&\leq &(1+c{\eps}) \frac{r_2}{l+1}(e^{- \frac{r_1+r_2}{s}\log\frac{\lfloor \eps K \rfloor}{l}}+c\sqrt{\eps})\E^{(1)}[U_{l-1}^K(1)]
\\
&\leq &(1+c\sqrt{\eps}) \frac{r_2}{l+1}e^{- \frac{r_1+r_2}{s}\log\frac{\lfloor \eps K \rfloor}{l}}
\E^{(1)}[U_{l-1}^K(1)],
\end{eqnarray*}
for a finite $c$, $\eps$ small enough and $K$ large enough, where we used that
$(r_1+r_2)\log K$ is bounded. We similarly get a lower bound and end up the proof of Equation
\eqref{541} by applying \eqref{expupa2}.
\end{proof}

\begin{proof}[Proof of Equation \eqref{542}]
We will decompose the event considered here according to the value of $\tilde{N}_a$ when the first (backwards in time) recombination
occurs.
Let us denote by  $\zeta_k^K{(1)}$ the jump number of the last hitting of $k\leq \lfloor  \varepsilon K \rfloor$
 by $\tilde{N}_a$ during the first phase,
\begin{equation}\label{zeta} \zeta_k^K{(1)}: = \sup \{ m, \tau_m^K \leq \tilde{T}_\eps^K,
\tilde{N}_a(\tau_m^K)=k \}, \end{equation}
and recall \eqref{defsigma}. Then we can
define the events
\begin{multline}\label{NRlsigmai} NR(l,\xi,i):=\{\text{the first locus of individual $i$ sampled at jump time $\tau^K_{\xi}$}\\
\text{does not recombine from the $a$- to the $A$-population between $0$ and $\tau_{\xi}^K$}\} \end{multline}
where $\xi \in \{\zeta_l^K(1),\sigma_l^K(1)\}$.
Similarly as in \eqref{decompproba1}, Bayes' rule leads to:
\begin{eqnarray} \label{decompoproba2}
&& \P^{(1)}(R(i,1) \geq 0 \mid R(i,2)>R(i,1),\tilde{N}_a(\tau^K_{R(i,2)})=l)\\
&&\quad =\sum_{k=1}^{\lfloor\eps K \rfloor}\P^{(1)}(R(i,1) \geq 0, \tilde{N}_a(\tau^K_{R(i,1)}){=k} \mid {R(i,2)>R(i,1), }\tilde{N}_a(\tau^K_{R(i,2)})=l),\nonumber\\
&&\quad \leq \sum_{k=1}^{\lfloor\eps K \rfloor} \Big(\underset{n_A \in I_\eps^K±1}{\sup} p_{aA}^{(1)}(n_A,k-1)
 \P^{(1)}_{(n_A,k)}(NR(l,\sigma,i))\Big)\mathcal{S}(k,l),\nonumber
\end{eqnarray}
where for the sake of simplicity we have introduced the notation
$$ \mathcal{S}(k,l):= \sum_{m<\infty} \P^{(1)}(m< R(i,2),  \tilde{N}_a(\tau_{m-1}^{K})=k-1, \tilde{N}_a(\tau_{m}^{K})=k \mid \tilde{N}_a(\tau^K_{R(i,2)})=l). $$
The lower bound is obtained by taking the infimum for $n_A$ in $I_\eps^K±1$ and replacing $\sigma$ by $\zeta$.
To lighten the proof, we bound the probability in the brackets for both $\sigma$ and $\zeta$ in Lemma \ref{probeventphase1}, Equation \eqref{prob:ai1_m'}.

First we prove that with a probability close to one the $a$-population size is bigger when the (backwards in time) first recombination occurs than when the second, of locus $(i,1)$, occurs.
Note that by \eqref{expupa2} and Lemma \ref{excunder}, there exists a finite $c$ such that for every
$l< k < \lfloor \eps K \rfloor$:
$$
\mathcal{S}(k,l)
 \leq \E^{(1)}[U_{l}^{K}(1)]\sup_{n_A \in I_\eps^K±1}\hspace{.1cm} \E^{(1)}_{(n_A,l+1)}[U_{n_A,l,k-1}^K(1)|\sigma_l^K(1)<\infty]
\leq c \mu_\eps^{k-l},
$$
where we recall that $\mu_\eps<1$ for $\eps$ small enough.
Hence, recalling \eqref{decompoproba2} and \eqref{rqpr}, we obtain for $k >l $
\begin{equation*}
 \P^{(1)}(R(i,1) \geq 0, \tilde{N}_a^K(\tau^K_{R(i,1)})\geq l | R(i,2)>R(i,1),\tilde{N}_a(\tau^K_{R(i,2)})=l)\leq
cr_1 \sum_{k=l+1}^{\lfloor\eps K \rfloor} \frac{\mu_\eps^{k-l}}{k}
  \leq  \frac{c}{\log K},
\end{equation*}
for a finite $c$ and $\eps$ small enough, which entails
\begin{multline*}
\P^{(1)}(R(i,1) \geq 0 \mid R(i,2)>R(i,1),\tilde{N}_a(\tau^K_{R(i,2)})=l)\\
\leq \sum_{k=1}^{l} \Big(\underset{n_A \in I_\eps^K±1}{\sup} p_{aA}^{(1)}(n_A,k-1)
 \P^{(1)}_{(n_A,k)}(NR(l,\sigma,i))\Big)\mathcal{S}(k,l) + O\left( \frac{1}{\log K} \right).
\end{multline*} We therefore can ignore all $k> l$ in the sum in
\eqref{decompoproba2} and continue with the case $k\leq l$.
In this setting, we can bound the sum $\mathcal{S}(k,l)$ as follows:
$$
\E^{(1)}[U_{k-1}^{K}(1)] - \underset{n_A \in I_\eps^K±1}{\sup}\hspace{.1cm} \E^{(1)}_{(n_A,l-1)}[U_{n_A,l,k-1}^{K}(1)]\E^{(1)}[U_{l}^{K}(1)]
\leq \mathcal{S}(k,l)\leq \E^{(1)}[U_{k-1}^{K}(1)].$$
Bounding the difference between the two bounds above within Equation \eqref{decompoproba2} then yields
\begin{equation*}
 \sum_{k=1}^{l} \frac{r_1}{k}   \underset{n_A \in I_\eps^K±1}{\sup}\hspace{.1cm} \E^{(1)}_{(n_A,l-1)}[U_{n_A,l,k-1}^{K}(1)]
 \E^{(1)}[U_{l}^{K}(1)]\\
\leq cr_1 \sum_{k=1}^{l} \frac{\mu_\eps^{l-k}}{k} \leq \frac{c }{\log K},
\end{equation*}
 for a finite $c$ by \eqref{rqpr}, \eqref{expupa2} and Lemma \ref{excunder}.
As a consequence,
$$
  \sum_{k=1}^{l} \Big(\underset{n_A \in I_\eps^K±1}{\sup} p_{aA}^{(1)}(n_A,k-1)
 \P^{(1)}_{(n_A,k)}(NR(l,\sigma,i))\Big)\Big|\mathcal{S}(k,l)-\E^{(1)}[U_{k-1}^{K}(1)]\Big|\leq  O\left( \frac{1}{\log K} \right),
$$
and thus we can work with $\E^{(1)}[U_{k-1}^{K}(1)]$ as an approximation for
the sum $\mathcal{S}(k,l)$:
\begin{multline*}
\P^{(1)}(R(i,1) \geq 0 \mid R(i,2)>R(i,1),\tilde{N}_a(\tau^K_{R(i,2)})=l)\\
\leq \sum_{k=1}^{l} \Big(\underset{n_A \in I_\eps^K±1}{\sup} p_{aA}^{(1)}(n_A,k-1)
 \P^{(1)}_{(n_A,k)}(NR(l,\sigma,i))\Big)\E^{(1)}[U_{k-1}^{K}(1)] + O\left( \frac{1}{\log K} \right).
\end{multline*}

Reasoning in the same way to get a lower bound and using \eqref{rqpr} and \eqref{prob:ai1_m'}
 we get the existence of a
finite $c$ such that for $K$ large
enough and $\eps$ small enough,
$$\Big| \P^{(1)}(R(i,1) \geq 0 | R(i,2)>R(i,1),\tilde{N}_a^K(\tau^K_{R(i,2)})=l) -
\sum_{ k=1}^{l-1} \frac{r_1}{k} e^{ -\frac{r_1}{s} \log  \frac{l-1}{k} }\E^{(1)}[U_{k}^{K}(1)]\Big|\leq c \sqrt{\eps}.$$
Applying \eqref{expupa2} and \eqref{lemma3.5} yields Equation \eqref{542}.
Notice that we have replaced $1/k$ by $1/(k+1)$. We used Condition \eqref{assrK} to do this.
\end{proof}

\subsubsection{Event $[12,2]^{rec}_{A,i}$} Recall the definition of $[12,2]^{rec}_{A,i}$ on page \pageref{defgenealogy}.
This section is devoted to the proof of the following result:

\begin{pro}\label{lemma_122Arec}
Let $i$ be an individual sampled uniformly at the end of the first phase.
There exist two finite constants $c$ and $\eps_0$ such that for $\eps\leq \eps_0$,
$$
\limsup_{K \to \infty}\Big|\P^{(1)}([12,2]^{rec}_{A,i})-r_1\Big[\frac{1-e^{- \frac{r_1+r_2}{s}\log\lfloor \eps K \rfloor}}{r_1+r_2}
+ \frac{e^{- \frac{r_1+r_2}{s}\log\lfloor \eps K \rfloor}-e^{- \frac{f_Ar_2}{f_as}\log\lfloor \eps K \rfloor}}{r_1+r_2(1-f_A/f_a)}\Big]\Big| \leq c \sqrt{\eps}.
$$
\end{pro}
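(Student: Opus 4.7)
The plan is to follow the template of the proof of Proposition~\ref{lemma_21Arec}, decomposing $\P^{(1)}([12,2]^{rec}_{A,i})$ according to the $a$-population size $l$ at the (backwards in time) first recombination, which here must be a single SL--N1 event on an $a$-birth that brings both $(i,1)$ and $(i,2)$ together into the $A$-population. Denoting this jump number by $R(i,12)$, I write
\begin{equation*}
\P^{(1)}([12,2]^{rec}_{A,i}) \;=\; \sum_{l=1}^{\lfloor\eps K\rfloor-1}\mathcal A_l\,\mathcal B_l \;+\; O(\sqrt{\eps}),
\end{equation*}
where $\mathcal A_l$ is the probability that the backwards-first recombination takes place at $\tilde N_a=l$ with no later recombination affecting $(i,1)$ or $(i,2)$ during the rest of the first phase, and $\mathcal B_l$ is the conditional probability that, between time $0$ and this jump, a second recombination splits the two loci inside the $A$-population. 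All competing trajectories (within-$a$ splits, $aAa$ excursions, type-$A$ coalescences and double recombinations) are absorbed by Lemma~\ref{lemma_negevents}.

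The factor $\mathcal A_l$ is handled exactly as in Equation~\eqref{541}, with $p_{aA}^{(2)}$ and $r_2$ replaced by $p_{aA}^{(1)}$ and $r_1$ respectively. A Bayes-rule decomposition as in~\eqref{decompproba1}, combined with $p_{aA}^{(1)}(n_A,l-1)\approx r_1/l$ from Lemma~\ref{lemma:recprob}, the expected-upcrossing estimate~\eqref{expupa2}, and the no-recombination-after-step-$1$ bound~\eqref{lemPNRml}, yields
\begin{equation*}
\mathcal A_l \;=\; (1+O(\sqrt{\eps}))\,\frac{r_1\bigl(1-(1-s)^{\lfloor\eps K\rfloor-l}-(1-s)^{l+1}\bigr)}{s(l+1)}\,e^{-\frac{r_1+r_2}{s}\log(\lfloor\eps K\rfloor/l)}.
\end{equation*}

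The main obstacle is $\mathcal B_l$, for which I expect the leading value $\mathcal B_l\approx 1-e^{-\frac{f_Ar_2}{f_as}\log l}$. Heuristically, the time from $0$ to the step-$1$ jump is close to $\log l/(sf_a)$ by the coupling~\eqref{couplage1} with the supercritical birth--death processes $Z^\pm_\eps$ and their first-hitting-time estimates, while along the single $A$-lineage that carries both loci after step~$1$ the N1--N2 splits to another $A$-individual occur at rate $\approx f_A r_2$ per unit time (the $A\to a$ splits being negligible by~\eqref{rqpr2}); a Poisson-type argument then gives $1-\exp(-f_A r_2\log l/(sf_a))$. To make this rigorous without a direct Poisson approximation I would mirror the structure of the proof of~\eqref{542}: condition on the $a$-population size $k\le l$ at the jump $m'$ of the first (backwards) $A$-lineage split and write the contribution as $p_{AA}^{(2)}(n_A,k)\cdot\E^{(1)}[\mathcal U^K_k(1)]\cdot(\textnormal{no earlier }A\textnormal{-lineage split between }k\textnormal{ and }l)$. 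Lemma~\ref{lemma:recprob} gives $p_{AA}^{(2)}(n_A,k)\approx r_2/(\bar n_A K)$ and Lemma~\ref{lemespvarmathcalU} gives $\E^{(1)}[\mathcal U^K_k(1)]\approx f_A\bar n_A K/(sf_a k)$, so their product collapses the $A$-population dependencies into the clean per-$k$ weight $f_Ar_2/(sf_a k)$. The ``no earlier split'' factor, obtained by the same inductive exponential computation that underlies~\eqref{lemPNRml} but now carried along the $A$-lineage, equals $e^{-\frac{f_Ar_2}{f_as}\log(l/k)}$ up to $O(\sqrt{\eps})$; summing over $k$ via Lemma~\ref{equivalent}, exactly as in the closing step of the proof of~\eqref{542} but with $r_1/s$ replaced by $f_Ar_2/(f_as)$, gives $\mathcal B_l = 1-e^{-\frac{f_Ar_2}{f_as}\log l}+O(\sqrt{\eps})$.

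For the final assembly I substitute into $\sum_l \mathcal A_l\mathcal B_l$, replace the boundary factor $\bigl(1-(1-s)^{\lfloor\eps K\rfloor-l}-(1-s)^{l+1}\bigr)/s$ by its leading value $1/s$ (the corrections contribute only at order $\eps$ after summation, thanks to~\eqref{assrK}), and evaluate the two remaining sums $\sum_l \tfrac{1}{l+1}(l/\lfloor\eps K\rfloor)^{(r_1+r_2)/s}$ and $\sum_l \tfrac{1}{l+1}(l/\lfloor\eps K\rfloor)^{(r_1+r_2)/s} l^{-f_Ar_2/(f_as)}$ by the integral-comparison Lemma~\ref{equivalent}. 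The first produces $s(1-e^{-(r_1+r_2)\log\lfloor\eps K\rfloor/s})/(r_1+r_2)$ and the second $s(e^{-f_Ar_2\log\lfloor\eps K\rfloor/(f_as)}-e^{-(r_1+r_2)\log\lfloor\eps K\rfloor/s})/(r_1+r_2-f_Ar_2/f_a)$; multiplying by $r_1/s$, the $s$-factors cancel and the identities $s\cdot(r_1+r_2)/s=r_1+r_2$ and $s\cdot(r_1+r_2-f_Ar_2/f_a)/s=r_1+r_2(1-f_A/f_a)$ recover exactly the two brackets of the statement, yielding the claimed formula with total error of order $\sqrt{\eps}$.
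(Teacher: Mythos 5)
Your proposal is correct and follows essentially the same route as the paper: the same decomposition over the $a$-population size $l$ at the backwards-first simultaneous SL--N1 recombination, the same factorization into the two probabilities (your $\mathcal A_l$ is the paper's mirror of \eqref{541} with $r_1$ in place of $r_2$, and your $\mathcal B_l$ is the paper's $\P^{(1)}(RA(l,i))$ from \eqref{approxhatPRAl}), and the same final summation via Lemma \ref{equivalent}. The only cosmetic divergence is that the paper obtains $\mathcal B_l\approx 1-e^{-\frac{f_Ar_2}{f_as}\log l}$ through the product-to-exponential Poisson approximation of Lemma \ref{probeventphase1} (sandwiching between the hitting times $\sigma_l^K(1)$ and $\zeta_l^K(1)$), whereas you re-derive it by a first-split decomposition in the spirit of \eqref{542}; both rest on the identical inputs $p_{AA}^{(2)}\approx r_2/(\bar n_AK)$ and $\E^{(1)}[\mathcal U_k^K(1)]\approx f_A\bar n_AK/(sf_ak)$.
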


\begin{proof}
As the proof is very similar to the proof of Proposition \ref{lemma_21Arec} we will be very brief here and only give
the ingredients.
Let us introduce for $l<\lfloor \eps K \rfloor$ the event:
\begin{equation}\label{defRAli}
 RA(l,i):=\{ [12,2]^{rec}_{A,i} \mid R(i,2)=R(i,1)\geq 0, \tilde{N}_a(\tau_{R(i,1)}^{K})=l \}.
\end{equation}
Then we can rewrite the probability of $[12,2]^{rec}_{A,i}$ as follows:
\begin{equation}\label{P122}
   \P^{(1)}([12,2]^{rec}_{A,i})
=\sum_{l=1}^{\lfloor\eps K \rfloor} \P^{(1)}(RA(l,i))\P^{(1)}(R(i,2)=R(i,1)\geq 0, \tilde{N}_{a}^K(\tau_{R(i,1)}^{K})=l).
\end{equation}
Apart from the point of recombination, the second probability in the above sum coincides with the probability studied
in \eqref{541} and we obtain for $\eps$ small enough and $K$ large enough,
 \begin{multline}\label{approxmirror}
  \sup_{l \leq \lfloor \eps K \rfloor}\ l {\cdot}\Big| \P^{(1)}(R(i,2)=R(i,1)\geq 0, N_{a}^K(\tau_{R(i,1)}^{K})=l)-\\
  \frac{r_1(1-(1-s)^{\lfloor \eps K \rfloor-l}-(1-s)^{l+1})}{s(l+1)}e^{- \frac{r_1+r_2}{s}\log\frac{\lfloor \eps K \rfloor}{l}}
  \Big|\leq c \frac{\sqrt{\eps}}{\log K},
 \end{multline}
for a finite $c$, when substituting $r_2$ by $r_1$ in the fraction which mirrors the recombination probability.
The probability of $RA(l,i)$ is derived in Lemma \ref{probeventphase1}.
Inserting \eqref{approxmirror} and \eqref{approxhatPRAl} into \eqref{P122} yields
\begin{eqnarray*}
   \P^{(1)}([12,2]^{rec}_{A,i})
&\leq &  \sum_{l=1}^{\lfloor\eps K \rfloor} ( 1-e^{- \frac{f_A}{f_a} \frac{r_2}{s} \log l })\frac{r_1}{l+1}e^{-\frac{r_1+r_2}{s} \log\frac{\lfloor \eps K \rfloor}{ l}}+c\sqrt{\eps}
\\
&\leq &r_1e^{- \frac{r_1+r_2}{s}\log\lfloor \eps K \rfloor}\Big[ \frac{e^{\frac{r_1+r_2}{s}\log\lfloor \eps K \rfloor} -1}{r_1+r_2}  -\frac{e^{\frac{r_1+r_2-f_Ar_2/f_a}{s}\log\lfloor \eps K \rfloor} -1}{r_1+r_2-f_Ar_2/f_a}\Big]+c\sqrt{\eps}
\end{eqnarray*}
where we again applied Lemma \ref{equivalent} to express the sum in a different way,
and used the {finiteness} of $\limsup_{K \to \infty}(r_1+r_2)\log K$ assumed in Condition \eqref{assrK}. Reasoning similarly for the lower
bound and rearranging the terms end the proof of
Proposition \ref{lemma_122Arec}.
\end{proof}

\subsection{Proof of Proposition \ref{prop:1phase_probg1}}\ 

\noindent \textbf{Event $R2(i)^{(1)}$:} By definition and from Lemma \ref{lemma_negevents},
$$ \P^{(1)}(R2(i)^{(1)})
 =\P^{(1)}(R(i,2)\geq 0) - \P^{(1)}(R(i,1)\geq 0)+ O\Big(\frac{\log K}{K}\Big), $$
where $R(i,1)$ and $R(i,2)$ have been defined in \eqref{def:Rij}. But these probabilities have already been derived in
\cite{smadi2014eco} Lemma 7.4,
and we get:
$$ \P^{(1)}(R2(i)^{(1)})
 =(1-q_1q_2) - (1-q_1)+O_K(\eps)=q_1(1-q_2)+O_K(\eps), $$
 where $O_K(\eps)$ satisfies \eqref{defOKeps}.

\noindent \textbf{Event $R1|2(i)^{(1,ga)}$:} By definition (see page \pageref{defgenealogy})
 $$ \P^{(1)}(R1|2(i)^{(1,ga)})=\P^{(1)}([2,1]^{rec}_{A,i})+\P^{(1)}([12,2]^{rec}_{A,i}) . $$
 The result then follows from Propositions \ref{lemma_21Arec} and \ref{lemma_122Arec}.\\

\noindent \textit{Event $R12(i)^{(1)}$:}
From Definition \eqref{defRAli} and Equation \eqref{approxhatPRAl} we obtain for $K$ large enough,
\begin{eqnarray*}
\P^{(1)}( R12(i)^{(1)})&=&
\sum_{l=1}^{\lfloor\eps K \rfloor}(1-\P^{(1)}(RA(l,i))) \P^{(1)}(R(i,1)=R(i,2)\geq 0, \tilde{N}_a(\tau_{R(i,2)}^{K})=l)\\
&=& r_1\sum_{l=1}^{\lfloor\eps K \rfloor} e^{- \frac{f_A}{f_a} \frac{r_2}{s} \log l } \frac{1-(1-s)^{\lfloor \eps K \rfloor-l}-(1-s)^{l+1}}{s(l+1)}
e^{- \frac{r_1+r_2}{s}\log\frac{\lfloor \eps K \rfloor}{l}}+O_K(\sqrt{\eps})\\
&= &\frac{r_1}{r_1+r_2-f_Ar_2/f_a}\Big(e^{-\frac{r_2}{s}\frac{f_A}{f_a}\log\lfloor \eps K \rfloor}-e^{- \frac{r_1+r_2}{s}\log\lfloor \eps K \rfloor}\Big)+O_K(\sqrt{\eps}),
\end{eqnarray*}
where  we again used the statement of Lemma \ref{equivalent} to substitute
the sum, as well as Equation \eqref{lemma3.5}.\\

\noindent \textit{Event $NR(i)^{(1)}$:}
From Lemma \ref{lemma_negevents},
$$ \P^{(1)}(NR(i)^{(1)})=1-\P^{(1)}(R2(i)^{(1)})-\P^{(1)}( R12(i)^{(1)})-\P^{(1)}(R2(i)^{(1)})+O\Big(\frac{\log K}{K} \Big). $$
This ends up the proof of Proposition \ref{prop:1phase_probg1}. \hfill $\square$

\section{Second and third phases}\label{secondphase}

This section is devoted to the proofs of Propositions \ref{prosecondphase} and \ref{prothirdphase}.

\subsection{Proof of Proposition \ref{prosecondphase}}

We need to show that two distinct lineages picked uniformly at the end of the second phase coalesce or recombine during that phase only with negligible probability.
Let us recall the definition of the jumps $\tau_m^K$ in \eqref{deftpssauts} and
denote by $U^K(2)$ the number of upcrossings of the $a$-population during the second phase:
\begin{equation}\label{defU2K} U^K(2) :=\#\{ m, T_\eps^K<\tau_m^K\leq T_\eps^K + t_\eps, N_{a}({\tau}_{m+1}^K)-N_a({\tau}_{m}^K)=1 \}. \end{equation}
Let us introduce the event $C_\eps^K$:
$$ C_\eps^K:= \{T_\eps^K \leq S_\eps^K\}\cap \{N_a^K(t) \geq \eps^2K/4, \forall \ T_\eps^K \leq t \leq T_\eps^K +t_\eps\}.$$
In particular on the event $C_\eps^K$, for $T_\eps^K \leq \tau_m^K \leq T_\eps^K+t_\eps$ and $j \in \{1,2\}$
$$p_{aA}^{(j)}(N(\tau_m^K))\leq \frac{8r_j}{\eps^2 K}\quad \text{and} \quad
 p_{aa}^{(c,j)}(N(\tau_m^K))\leq \frac{32}{\eps^4 K^2}. $$
Then if we recall the definition of $NR(i)^{(2)}$ on page \pageref{defgenealogy3} we have for $m \in \N$,
\begin{equation}\label{stepprop2} \P^{(1)}(NR(i)^{(2)}|U^K(2)=m,C_\eps^K )
\geq \Big( 1- \frac{8(r_1+r_2)}{\eps^2 K} \Big)^m. \end{equation}
But for $K$ large enough, $\log (1-8(r_1+r_2)/(\eps^2 K))\geq -10(r_1+r_2)/(\eps^2 K)$ and hence
\begin{eqnarray*}
 \P^{(1)}(NR(i)^{(2)}|C_\eps^K)&\geq & \Big(1- \P^{(1)}(U^K(2)>  K\log\log K|C_\eps^K)\Big)
 e^{ K\log\log K\log (1-\frac{8(r_1+r_2)}{\eps^2 K})}\\
&\geq & \Big(1- \P^{(1)}(U^K(2)>  K\log\log K|C_\eps^K)\Big)
 e^{ -\frac{10(r_1+r_2) \log\log K}{\eps^2 }}.
\end{eqnarray*}
According to Condition \eqref{assrK} the exponential term is equivalent to $1$ when $K$ is large.
Moreover, by \eqref{result_champa}, $N_a^K$ is smaller than $2\bar{n}_aK$ on the time interval $[T_\eps^K, T_\eps^K+t_\eps]$
with probability close to $1$. When this property holds, we can bound the birth number $U^K(2)$ by
the sum of $2\bar{n}_aK$ iid Poisson random variables with parameter $f_at_\eps$. The strong law of large numbers then yields
$$ \underset{K \to\infty}{\lim}\P^{(1)}(U^K(2)>  K\log\log K|C_\eps^K)=0. $$
Applying again \eqref{result_champa} to get
$ {\lim}_{K \to\infty}\P(C_\eps^K|T_\eps^K<\infty)=1 $
finally gives
$$  \underset{K \to\infty}{\lim}\P(NR(i)^{(2)}|T_\eps^K<\infty)=1.$$
The coalescence part in Proposition \ref{prosecondphase} can be proven in the same way.

\subsection{Proof of Proposition \ref{prothirdphase}}
The proof of the asymptotic probability of $R2(i)^{(3,ga)}$ is the same as for \eqref{approxhatPRAl}, except that the roles of
$A$ and $a$ are exchanged. Hence we do not give more details.
Note however that it extensively uses Lemma \ref{lemespvarU}.
Let us now focus on the event $NR(i)^{(3)}$,
and introduce
$$NRA(i)^{(3)}:=\{\text{no neutral allele of individual $i$ recombines from the $a$ to the $A$ population}\}.$$
Recall the definitions of $\P^{(3)}$ and $U^K(3)$ in \eqref{defP1} and \eqref{UDk3phase} respectively. We decompose the probabilities according to the
number of upcrossings of $\tilde{\tilde{N}}_a$ during the third phase and get in the same way as in \eqref{stepprop2}, for $m\in \N$
$$ \P^{(3)}(NRA(i)^{(3)}|U^K(3)=m,\{\tilde{\tilde{T}}_0^{(K,A)}< \tilde{\tilde{T}}_\eps^{(K,A)}\wedge S_\eps^{(K,a)}\})
\geq \Big( 1- \frac{f_A(r_1+r_2)\eps}{f_a(\bar{n}_a-M''\eps)^2 K} \Big)^m, $$
where we recall that
$\tilde{\tilde{T}}_0^{(K,A)}$ and $\tilde{\tilde{T}}_\eps^{(K,A)}$ are the analogs of
 ${T}_0^{(K,A)}$ and $T_\eps^{(K,A)}$ (defined in \eqref{T0K}) for the process $\tilde{\tilde{N}}$.
But for $K$ large enough and $\eps$ small enough,
$$\log \Big(1-\frac{f_A(r_1+r_2)\eps}{f_a(\bar{n}_a-M''\eps)^2 K}\Big)\geq
-2f_A\frac{(r_1+r_2)\eps}{f_a\bar{n}_a^2 K}.$$
Hence we get for a finite constant $c$ and $\eps$ small enough:
\begin{eqnarray*}
\P^{(3)}(NRA(i)^{(3)})& \geq &
 \Big(1- \P^{(3)}\Big(U^K(3)>  \frac{K\log K}{\sqrt{\eps}}\Big)\Big)
\exp\Big( -\frac{2f_A(r_1+r_2)\sqrt{\eps} \log K}{f_a\bar{n}_a^2 } \Big)\\
& \geq & \Big(1- \frac{\sqrt{\eps}\E^{(3)}[U^K(3)]}{  K\log K }\Big)\Big( 1-\frac{2f_A(r_1+r_2)\sqrt{\eps} \log K}{f_a\bar{n}_a^2 } \Big)
 \geq  (1- {c\sqrt{\eps}})^2,
\end{eqnarray*}
where we used  Lemma \ref{lemespvarU} and that
$(r_1+r_2)\log K$ is bounded (Condition \eqref{assrK}).\\

The proof of the last part of Proposition \ref{prothirdphase} is very similar to that of Proposition \ref{prosecondphase}.
 The key arguments
are that the expectation of
the birth number of $a$-individuals during the third phase under $\P^{(3)}$ is of order $K\log K$ (Lemma \ref{lemespvarU}),
 whereas the probability for two neutral alleles associated with an allele
$a$ to coalesce is of order $1/K^2$ at each birth of an $a$-individual (Lemma \ref{lempcoal}).

{\section{Independence of neutral lineages}\label{proofindep}
This section is dedicated to the proof of Proposition \ref{proindi}.
We sample $d$ distinct individuals uniformly at the end of the first phase.
We recall the definitions of the genealogical events during the first phase on page \pageref{defgenealogy}
 and introduce:
$$
 R(1|2):=\underset{1\leq i \leq d}{\sum} \mathbf{1}_{R1|2(i)^{(1,ga)}}, \quad R(1):=R(1|2)+ \underset{1\leq i \leq d}{\sum} \mathbf{1}_{R12(i)^{(1)}}
\quad \text{and} \quad
 R(2):=R(1)+ \underset{1\leq i \leq d}{\sum} \mathbf{1}_{R2(i)^{(1)}}
$$
From Proposition \ref{prop:1phase_probg1} we know that
$R(1)$, $R(2)$ and $R(1|2)$ are sufficient to describe the neutral genealogies at the end of the first phase up to a probability negligible
with respect to one for large $K$.
Let $j,k,l$ be three integers such that $l\leq j$ and $j+k\leq d.$
We aim at approximating
\begin{align}\label{decompoindep}
 p(j,k,l):&=\P(R(1)=j,R(2)=j+k,{R(1|2)}=l|T_\eps^K\leq S_\eps^K)\\
&=\P(R(1)=j|T_\eps^K\leq S_\eps^K)\P(R(2)=j+k|T_\eps^K\leq S_\eps^K,R(1)=j)\nonumber\\
& \hspace{1cm}\P({R(1|2)}=l|T_\eps^K\leq S_\eps^K,R(1)=j,R(2)=j+k).\nonumber
\end{align}
The approximations of the two first probabilities are direct adaptations of Lemma 5.2 and the proof of Proposition 2.6 in
\cite{schweinsberg2005random}, pp 1623-1624. More precisely, Lemma 7.3 in \cite{smadi2014eco} which states that with high probability
two neutral lineages do not coalesce and then recombine (backwards in time) allows us to get an equivalent of Lemma 5.2 (with $J=0$) in
\cite{schweinsberg2005random}:
$$ \Big| \P(R(1)=j|T_\eps^K\leq S_\eps^K)-{d\choose j} \E[F_1^j(1-F_1)^{n-j}|T_\eps^K \leq S_\eps^K] \Big|\leq c\Big(\frac{1}{\log K}+\eps\Big), $$
for $\eps$ small enough, where $c$ is a finite constant,
$$F_1:= \P(R(i,1)\geq 0|((N_A,N_a)(\tau_n^K),n\leq J^K(1)),T_\eps^K \leq S_\eps^K), $$
and $R(i,1)$ is defined in \eqref{def:Rij}.
Then Equations (7.21), (7.23), (7.24) and (7.26) of \cite{smadi2014eco} yield
$$ \underset{K \to \infty}{\limsup}\ \Big| \E[F_1^j(1-F_1)^{d-j}|T_\eps^K \leq S_\eps^K] -(1-q_1)^jq_1^{(n-j)} \Big| \leq c \eps, $$
where $q_1$ has been defined in \eqref{defq1q2}, which allows to conclude
\begin{equation} \label{indepr1}   \underset{K \to \infty}{\limsup} \ \Big| \P(R(1)=j|T_\eps^K\leq S_\eps^K)-
{d\choose j}(1-q_1)^jq_1^{(d-j)} \Big|\leq c\eps, \end{equation}
for $\eps$ small enough where $c$ is a finite constant.\\

The derivation of the second probability, $\P(R(2)=j+k|T_\eps^K\leq S_\eps^K,R(1)=j)$, follows the same outline.
The lineages where $N1$ does not escape the sweep can be seen as
lineages where $SL$ and $N1$ are the same locus and the recombination probability between $SL-N1$ and $N2$ is $r_2$.
This is due to the independence of the recombinations
between $SL$ and $N1$ and between $N1$ and $N2$. Hence we can rewrite
the probability as follows:
$$ \P(R(2)=j+k|T_\eps^K\leq S_\eps^K,R(1)=j)=\P(R(2)-R(1)=k|T_\eps^K\leq S_\eps^K,d-R(1)=d-j). $$
We can then directly apply the result \eqref{indepr1} for the law of $R(1)$ and get:
\begin{equation}\label{indepr2}  \underset{K \to \infty}{\limsup} \ \Big|  \P(R(2)=j+k|T_\eps^K\leq S_\eps^K,R(1)=j)-
{d-j\choose k}(1-q_2)^kq_2^{(d-j-k)} \Big|\leq c\eps, \end{equation}
for $\eps$ small enough where $c$ is a finite constant and $q_2$ has been defined in \eqref{defq1q2}.\\

The derivation of the last probability in \eqref{decompoindep}
is more involved but follows the same spirit.
First note that we only have to focus on genealogies where $N1$ escapes the sweep. Hence the derivation
of the probability comes down to the derivation of
$ \P(R(1|2)=l|T_\eps^K\leq S_\eps^K,R(1)=j)$.
The idea is to propose an alternative construction of the process with the same law and where we add the recombinations between
$N1$ and $N2$ at the end:
\begin{enumerate}
 \item[$\bullet$] First we construct a trait population process $(N_A,N_a)$ with birth and death rates defined in \eqref{def:totalbd}
 \item[$\bullet$] Second we "add" the recombinations between $SL$ and $N1$: at each birth event we draw a Bernoulli variable with parameter $r_1$ to decide whether there
 is a recombination or not.  If there is a recombination, the parent giving its neutral allele at $N1$ is chosen with a probability proportional to
 its fertility ($f_A$ or $f_a$). {\\After} this step of the construction we know the genealogies of the $d$ neutral alleles at $N1$ sampled at the end of the sweep.
We label $(i_1,...,i_j)$ the $j$ sampled neutral alleles at $N1$ which experience a recombination between $SL$ and $N1$ in their genealogy.
 \item[$\bullet$] Third we "add" the recombinations between $N1$ and $N2$ sequentially in the lineages where there is already a recombination between $SL$ and $N1$:
 we first follow {backward} in time the lineage of $i_1$ and at each birth event we draw a Bernoulli variable with parameter $r_2$ to decide whether there
 is a recombination or not, and choose the parent of neutral allele at $N2$ as in the second step. Then we do the same with the lineage of $i_2$, and so on until
 the lineage of $i_j$. 
 \item[$\bullet$] Finally we "add" the recombinations between $N1$ and $N2$ in {those lineages which were not marked with any recombination between $SL$ and $N1$}.
\end{enumerate}
Such a construction generates a process distributed as the original process and facilitates the study of the dependencies between lineages
$(i_1,...,i_j)$.
According to Lemma \ref{lemma_negevents}, with high probability
there is no recombination between $SL$ and $N1$ after
(backwards in time) a coalescence at locus $N1$ among the $d$ sampled individuals.
In the same way, there is no coalescence at locus $N1$ after a recombination between $SL$ and $N1$
in the $A$-population
(this is due to the large number of $A$-individuals;
similar proof as for the last probability of Proposition \ref{prothirdphase}.
Hence if we introduce
$$ NC(j):=\{\text{there is no coalescence between lineages
$(i_1,...,i_j)$ at locus $N1$}\},$$
 we get:
$$ \P({R(1|2)}=l|T_\eps^K\leq S_\eps^K,R(1)=j)= \P({R(1|2)}=l|T_\eps^K\leq S_\eps^K,R(1)=j,NC(j))+O\Big( \frac{\log K}{K}\Big).
$$
{With the construction of the alternative process we can also define sequentially }for $1 \leq k \leq j$:
\begin{align*}
 NC(j,k):=\{&\text{there is no coalescence between lineages } (i_1,...,i_k)\text{ after completion of the}\\
&\text{process of adding the recombinations between $N1$ and $N2$ in the lineage $i_k$}\}.
\end{align*}
Then, if we introduce for $1\leq k \leq j$ and $\delta \in \{0,1\}$
\begin{align*} \{r_{i_k}=\delta\}:=& \{ \text{there is $\delta$ recombination between $N1$ and $N2$ in the lineage $i_k$} \},\end{align*}
then for $(\delta_1,...,\delta_j) \in \{0,1\}^j$
\begin{multline*}
 \P(r_{i_k}=\delta_k, 1\leq k \leq j|T_\eps^K\leq S_\eps^K,R(1)=j)=\\
 \underset{1 \leq k \leq j}{\prod}
 \P(r_{i_k}=\delta_k|T_\eps^K\leq S_\eps^K,R(1)=j,NC(j),NC(j,1),...,NC(j,k-1))+ O\Big( \frac{\log K}{K}\Big).
\end{multline*}
Indeed, the probability that the event $NC(j,k)$ is not realized after {witnessing} the recombinations between $N1$ and $N2$ in lineage
$i_k$ has order $\log K/K$ according to Lemma \ref{lemma_negevents}.
But for $1 \leq k \leq j$,
\begin{multline}  \P(r_{i_k}=\delta_k|T_\eps^K\leq S_\eps^K,R(1)=j,NC(j),...,NC(j,k-1))\\
=\frac{ \P(r_{i_k}=\delta_k,R(1)=j,NC(j),...,NC(j,k-1)|T_\eps^K\leq S_\eps^K) }
{\P(R(1)=j,NC(j),...,NC(j,k-1)|T_\eps^K\leq S_\eps^K) }\\
=\frac{ \P(r_{i_k}=\delta_k,R(1)=j|T_\eps^K\leq S_\eps^K)- \P(r_{i_k}=\delta_k,R(1)=j,(NC(j)\cap...\cap NC(j,k-1))^c|T_\eps^K\leq S_\eps^K) }
{ \P(R(1)=j|T_\eps^K\leq S_\eps^K)- \P(R(1)=j,(NC(j)\cap...\cap NC(j,k-1))^c|T_\eps^K\leq S_\eps^K) },
\end{multline}
and according to Lemma \ref{lemma_negevents} and Coupling \eqref{couplage2.1}, there exists a finite $c$ such that for $K$ large enough
and $\eps$ small enough,
$$ \P((NC(j)\cap...\cap NC(j,k-1))^c|T_\eps^K\leq S_\eps^K)\leq c\Big(\frac{\log K}{K}+\eps\Big). $$
As $ \P(r_{i_k}=\delta_k,R(1)=j|T_\eps^K\leq S_\eps^K) $ does not go to $0$ when $K$ goes to infinity, we get
\begin{multline*}
\P(r_{i_k}=\delta_k|T_\eps^K\leq S_\eps^K,R(1)=j,NC(j),...,NC(j,k-1))=
 \P(r_{i_k}=\delta_k|T_\eps^K\leq S_\eps^K,R(1)=j)+O\Big( \frac{\log K}{K}+\eps\Big) \\
 =  \delta_k\frac{\P(R1|2(i_k)^{(1,ga)}|T_\eps^K\leq S_\eps^K)}{\P(R(i_k,1)\geq 0|T_\eps^K\leq S_\eps^K)}+
(1-\delta_k)\Big(1-\frac{\P(R1|2(i_k)^{(1,ga)}|T_\eps^K\leq S_\eps^K)}{\P(R(i_k,1)\geq 0|T_\eps^K\leq S_\eps^K)}\Big)
+O\Big( \frac{\log K}{K}+\eps\Big)\\
= \delta_k\frac{1-q_1-q_3}{1-q_1}+
(1-\delta_k)\Big(1-\frac{1-q_1-q_3}{1-q_1}\Big)
+O\Big( \frac{\log K}{K}+\eps\Big),
\end{multline*}
where we recall the definition of $R(i_k,1)$ in \eqref{def:Rij}, the definition of $R1|2(i_k)^{(1,ga)}$ on page \pageref{defgenealogy}, and we used
Proposition \ref{prop:1phase_probg1}.
Adding Equations \eqref{indepr1} and \eqref{indepr2} we finally obtain:
\begin{eqnarray}
 p(j,k,l)&=&{n\choose j}(1-q_1)^{j}q_1^{(n-j)}{n-j\choose k}(1-q_2)^kq_2^{(n-j-k)}{j\choose l}
 \Big(1-\frac{q_3}{1-q_1}\Big)^l\Big(\frac{q_3}{1-q_1}\Big)^{j-l}+ O_K(\eps)\nonumber \\
 & =& \frac{n!}{l!(j-l)!k!(n-j-k)!}(q_1q_2)^{n-j-k}(q_1(1-q_2))^kq_3^{j-l}(1-q_1-q_3)^l+ O_K(\eps).
\end{eqnarray}
This ends the proof of the independence between
genealogies during the first phase.

The derivation of the asymptotic independence of neutral lineages during the third phase is an easy adaptation of Lemma 5.2 and the proof of Proposition 2.6 in
\cite{schweinsberg2005random}, pp 1623-1624 as with high probability two lineages do not coalesce during this phase.} \hfill $\square$

\appendix

\section{Lemma \ref{probeventphase1}}\label{appA}

Recall the definition of $NR(i)^{(1)}$ on page \pageref{defgenealogy}, and Definitions \eqref{NRlsigmai} and \eqref{defRAli}. Then we have
the following approximations for large $K$.

\begin{lem}\label{probeventphase1}
 There exist three finite constants $c$, $K_0$ and $\eps_0$ such that for every $K \geq K_0$ and $\eps \leq \eps_0$
\begin{equation}\label{lemPNRml} \sup_{n_A \in I_\eps^K±1,l \leq \lfloor \eps K \rfloor} \Big|  \P^{(1)}_{(n_A,l)}(NR(i)^{(1)})-
\exp\Big(-\frac{r_1+r_2}{s} \log \frac{ \lfloor \eps K \rfloor}{l}  \Big)\Big|
\leq c \sqrt{\eps},
\end{equation}
\begin{equation}\label{prob:ai1_m'}
 \sup_{\tau \in \{\zeta,\sigma\}}\sup_{n_A \in I_\eps^K±1,k \leq l \leq \lfloor \eps K \rfloor}\Big|\P^{(1)}_{(n_A,k)}(NR(l,\tau,i)  -\exp\Big( -\frac{r_1}{s} \log  \frac{l-1}{k}  \Big)\Big|
\leq c\sqrt{\eps},
\end{equation}
\begin{equation} \label{approxhatPRAl}
 \sup_{l \leq \lfloor \eps K \rfloor} \Big|\P^{(1)}(RA(l,i)) -\Big( 1-\exp\Big(- \frac{f_A}{f_a} \frac{r_2}{s} \log l \Big)\Big)\Big| \leq c \sqrt{\eps}.
\end{equation}
\end{lem}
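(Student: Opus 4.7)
All three estimates can be attacked with the same template. The idea is to follow a single tracked lineage backward through the upcrossings of $\tilde N_a$ (for \eqref{lemPNRml}, \eqref{prob:ai1_m'}) or of $\tilde N_A$ (for \eqref{approxhatPRAl}); at each such upcrossing the events affecting the lineage are conditionally independent Bernoullis whose small parameters are read off from Lemma \ref{lemma:recprob}. The conditional (on the jump chain) probability of the target event thus factorises into $\prod_k(1-p_k)^{N_k}$, where $N_k$ is the number of relevant upcrossings at level $k$. Expanding $\log(1-x)=-x+O(x^2)$ and invoking Condition \eqref{assrK} reduces the problem to estimating the Laplace transform $\E^{(1)}[e^{-X_l}]$ of a weighted sum $X_l=\sum_k p_k N_k$. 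I would use Lemmas \ref{lemmaexpup} and \ref{lemespvarmathcalU} together with Lemma \ref{equivalent} to compute $\E^{(1)}[X_l]$, and their covariance bounds to establish $\var^{(1)}(X_l)=O(\eps)$; the $1$-Lipschitz property of $e^{-\cdot}$ on $[0,\infty)$ then gives $|\E^{(1)}[e^{-X_l}]-e^{-\E^{(1)}[X_l]}|\leq\sqrt{\var^{(1)}(X_l)}=O(\sqrt\eps)$, which matches the claimed error.

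\textbf{Implementation for \eqref{lemPNRml}.} Starting from $(\tilde N_A,\tilde N_a)(0)=(n_A,l)$ and conditioning on the jump chain of $\tilde N_a$ up to $\tilde T^K_\eps$, the probability that at an upcrossing from $k$ to $k+1$ the newborn is the backward ancestor of lineage $i$ \emph{and} that a recombination sends at least one of $(i,1),(i,2)$ into the $A$-population is, by inclusion-exclusion on Definition \ref{defpalphaalpha} and Lemma \ref{lemma:recprob},
\[
p_k(n_A):=\bigl(p_{aA}^{(1)}+p_{aA}^{(2)}-p_{aA}^{(1,2)}\bigr)(n_A,k)=\tfrac{r_1+r_2}{k+1}\bigl(1+O(\eps)\bigr),
\]
uniformly in $n_A\in I_\eps^K\pm 1$. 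Conditional independence across distinct upcrossings gives
\[
\P^{(1)}_{(n_A,l)}\bigl(NR(i)^{(1)}\mid\text{jump chain}\bigr)=\prod_{k=l}^{\lfloor\eps K\rfloor-1}(1-p_k)^{U_k^K(1)}.
\]
Since $|\log(1-x)+x|\leq x^2$ on $[0,1/2]$ and $\sum_k p_k^2\,\E^{(1)}[U_k^K(1)]=O((r_1+r_2)^2\log K)=O(1/\log K)$ by Lemma \ref{lemmaexpup}, this product equals $e^{-X_l}(1+O(\eps+1/\log K))$ with $X_l:=\sum_{k=l}^{\lfloor\eps K\rfloor-1}\tfrac{r_1+r_2}{k+1}U_k^K(1)$. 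Equation \eqref{expupa2} and Lemma \ref{equivalent} give $\E^{(1)}[X_l]=\tfrac{r_1+r_2}{s}\log(\lfloor\eps K\rfloor/l)+O(\eps)$, and \eqref{covUmlk} combined with $\lambda_\eps<1$ for $\eps$ small (so $\sum_{k'}\lambda_\eps^{|k-k'|/2}=O(1)$) yields $\var^{(1)}(X_l)\leq c\bigl[\sum_k p_k^2\sum_{k'}\lambda_\eps^{|k-k'|/2}+\eps(\sum_k p_k)^2\bigr]=O(\eps+1/\log K)$. Cauchy--Schwarz applied to the $1$-Lipschitz map $x\mapsto e^{-x}$ on $[0,\infty)$ then closes the argument.

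\textbf{The other two and the main obstacle.} Statement \eqref{prob:ai1_m'} is the same argument with $r_1+r_2$ replaced by $r_1$ (only $N1$ is tracked) and summation range $[k,l-1]$; the difference between stopping at $\sigma_l^K(1)$ and $\zeta_l^K(1)$ amounts to recombinations during excursions of $\tilde N_a$ above $l$, whose expected contribution is $O\bigl(r_1\sum_{m>l}\mu_\eps^{m-l}/(m+1)\bigr)=O(1/\log K)$ by Lemma \ref{excunder} and \eqref{rqpr}. For \eqref{approxhatPRAl} I would instead follow the pair $\{(i,1),(i,2)\}$ inside the $A$-population between time $0$ and the hitting time of $l$ by $\tilde N_a$; the per-upcrossing splitting probability is $p_{AA}^{(2)}(\tilde N_A,k)=(r_2/\tilde N_A)(1+O(\eps))$ from \eqref{rqpr2}, so setting $Y_l:=\tfrac{r_2}{\bar n_A K}\sum_{k=1}^{l-1}\mathcal U_k^K(1)$ (using $\tilde N_A\in I_\eps^K\pm 1$ throughout the first phase) and applying Lemma \ref{lemespvarmathcalU} yields $\E^{(1)}[Y_l]=\tfrac{f_Ar_2}{f_as}\log l+O(\eps)$ together with $\var^{(1)}(Y_l)=O(\eps)$. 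The Lipschitz step then gives $\P^{(1)}(RA(l,i))=1-\E^{(1)}[e^{-Y_l}]=1-\exp(-\tfrac{f_Ar_2}{f_as}\log l)+O(\sqrt\eps)$. The main obstacle throughout is the variance estimate: it succeeds only because the covariances of $U_k^K(1)$ (resp.\ $\mathcal U_k^K(1)$) decay geometrically with factor $\lambda_\eps$ strictly less than $1$ (valid once $\eps$ is below a threshold), and because the weak-recombination scaling \eqref{assrK} absorbs the stray $\log K$ factors through the small prefactor $(r_1+r_2)^2$ (resp.\ $r_2^2/K^2$).
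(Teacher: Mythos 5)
Your proposal follows essentially the same route as the paper's proof: conditioning on the jump chain, the Poisson approximation $\prod_m(1-\theta(m))\approx e^{-\sum_m\theta(m)}$ with quadratic error absorbed via \eqref{assrK}, the identification of $\sum_m\theta(m)$ with the weighted upcrossing sum $\sum_k \frac{r_1+r_2}{k+1}U_k^K(1)$ (the paper's $\tilde{\tilde{\eta}}^{(12)}$), expectation via \eqref{expupa2} and Lemma \ref{equivalent}, variance $O(\eps)$ via the covariance decay \eqref{covUmlk}, and the Lipschitz/Cauchy--Schwarz step, with Lemma \ref{lemespvarmathcalU} playing the analogous role for \eqref{approxhatPRAl}. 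The only detail you leave implicit in \eqref{lemPNRml} is the contribution of upcrossings at levels below the starting level $l$ (excursions of $\tilde{N}_a$ below $\tilde{N}_a(0)$), which the paper bounds by $c(r_1+r_2)/l$ using \eqref{expupa3} --- the same type of excursion estimate you do invoke for \eqref{prob:ai1_m'}.
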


\begin{proof}
Let us introduce the sigma-algebra generated by the trait population process
\begin{equation*}\label{deftribu} \mathcal{F}:= \sigma \Big( (\tilde{N}_A,\tilde{N}_a)(\tau_m^K),\tau_m^K\leq \tilde{T}_{\eps}^K \Big). \end{equation*}
 We use some ideas developed in \cite{schweinsberg2005random} and extended in \cite{brink2014multsweep} towards the two-locus case. The proof, although quite technical, can be summarized easily:
for $(g,b,c,d,f) \in \R_+^5$, the Triangle
Inequality and the
Mean Value Theorem imply
$$ |g-e^{-b}|\leq |g-e^{-c}|+|c-d|+|d-f|+|f-b| .$$
Hence for every random variables $(X_1,X_2) \in \R_+^2$ and measurable event $C$:
\begin{multline*}
 \Big| \P^{(1)}(C| \mathcal{F})-e^{ -\frac{r_1+r_2}{s} \log \frac{\lfloor \eps K \rfloor}{l} }\Big|
 \leq \Big|\P^{(1)}(C| \mathcal{F})-e^{-X_1}\Big| +
 \Big|X_1-X_2\Big|\\
+\Big|X_2-\E^{(1)}[X_2]\Big|+
 \Big|\E^{(1)}[X_2]-\frac{r_1+r_2}{s} \log \frac{\lfloor \eps K \rfloor}{l} \Big|.
\end{multline*}
By taking the expectation and applying Jensen and Cauchy-Schwarz Inequalities, we obtain:
\begin{multline}\label{ineqX1X2}
 \Big| \P^{(1)}(C)-e^{ -\frac{r_1+r_2}{s} \log \frac{\lfloor \eps K \rfloor}{l}}\Big|\leq
\E^{(1)} \Big|\P^{(1)}(C|\mathcal{F})-e^{-X_1}\Big| +
\E^{(1)} \Big|X_1-X_2 \Big|\\+\sqrt{\var(X_2)}+
 \Big|\E^{(1)}[X_2]-\frac{r_1+r_2}{s} \log \frac{\lfloor \eps K \rfloor}{l}\Big|.
\end{multline}
Hence the idea is to find the appropriate random variables $(X_1,X_2) \in \R_+^2$ to get small quantities on the right
hand side.\\

\noindent \textit{Proof of Equation \eqref{lemPNRml}:} The first step consists in working conditionally on $\mathcal{F}$, describing
this probability as a product of conditional probabilities close to one, as well as in deriving a Poisson approximation.
To this aim, we define for $m\in \N$:
$$ \theta^{(12)} (m):=
\mathbf{1}_{\{\tau_m^K\leq \tilde{T}_\eps^K\}}\mathbf{1}_{ \{\tilde{N}_a(\tau_{m}^{K})-\tilde{N}_a(\tau_{m-1}^{K})=1\}}
(p_{aA}^{(1)}+p_{aA}^{(2)})(\tilde{N}_A,\tilde{N}_a)(\tau_{m-1}^{K}) , $$
where we recall the definition of the $p_{\alpha \alpha'}^{(i)}$ in Definition \ref{defpalphaalpha}. Notice that
 Remark \ref{rem:recprob} p. \pageref{rem:recprob} implies that for $\rho \in \{1,2\}$, $n_A \in I_\eps^K±1$ and $l<\lfloor \eps K \rfloor$,
 \begin{equation}\label{encadtheta}
(1-c\eps)(r_1+r_2)^\rho\Big( \sum_{k=1}^{l-1} \frac{\E^{(1)}_{(n_A,l)}U_{k}^{K}(1)}{(k+1)^\rho}\Big)\leq
\E^{(1)}_{(n_A,l)} \Big[\sum_{m=1}^\infty(\theta^{(12)} (m)\mathbf{1}_{\{\tilde{N}_a(\tau_{m}^{K}) < l\}})^\rho \Big]
\leq
(r_1+r_2)^\rho\Big( \sum_{k=1}^{l-1} \frac{\E^{(1)}_{(n_A,l)}U_{k}^{K}(1)}{(k+1)^\rho}\Big).
 \end{equation}
Then, similarly as in \cite{schweinsberg2005random}, we have for $n_A \in I_\eps^K±1$ and $l<\lfloor \eps K \rfloor$
$$\P^{(1)}_{(n_A,l)}(NR(i)^{(1)}|\mathcal{F})
=\prod_{m=1}^\infty(1-\theta^{(12)} (m)), \quad \P^{(1)}_{(n_A,l)}-\text{a.s.} $$
If we introduce the variable,
\begin{equation*}
 \eta^{(12)} := \sum_{m=1}^{\infty} \theta^{(12)} (m),
\end{equation*}
which will play the role of $X_1$ in \eqref{ineqX1X2}, we get by following the path of Lemma $3.6$ in \cite{schweinsberg2005random}:
\begin{equation} \label{diffcondexp}
\E^{(1)}_{(n_A,l)} \Big|\prod_{m=1}^\infty(1-\theta^{(12)} (m))-\exp( -\eta^{(12)})\Big| \leq
\E^{(1)}_{(n_A,l)} \Big[\sum_{m=1}^\infty(\theta^{(12)} (m))^2 \Big] \\
 \leq  \frac{c}{\log^2 K  },
\end{equation}
for K large enough, $n_A \in I_\eps^K±1, l<\lfloor \eps K \rfloor$ and a finite $c$
(which can be chosen independently of $l$), where we used Equations
\eqref{expupa2} \eqref{expupa3} and \eqref{encadtheta}, and Condition \eqref{assrK} for the last inequality.
Next we introduce an approximation of the random variable $\eta^{(12)}$, namely
\begin{align} \label{defetatilde12}
 \tilde{\eta}^{(12)} :=  \sum_{m=1}^{\infty} \theta^{(12)} (m)\mathbf{1}_{\{\tilde{N}_a(\tau_{m}^{K}) \geq \tilde{N}_a(0)\}},
\end{align}
which will play the role of $X_2$ in \eqref{ineqX1X2}. For $n_A \in I_\eps^K±1$ and $l \leq \lfloor \eps K \rfloor$:
\begin{align} \label{diffX1X2}
0\leq \E^{(1)}_{(n_A,l)}[ \eta^{(12)} - \tilde{\eta}^{(12)}] = &
\E^{(1)}_{(n_A,l)}\Big[\sum_{m=1}^{\infty} \theta^{(12)} (m) \mathbf{1}_{\{\tilde{N}_a(\tau_{m}^{K}) < l\}}\Big]\\
 \leq & \frac{r_1+r_2}{s_+(\eps)s_-^2(\eps)} \sum_{k=1}^{l-1} \frac{(1-s_-(\eps))^{l-k}}{k+1} \leq c \frac{(r_1+r_2) }{l},
\end{align}
for a finite $c$ and $\eps$ small enough, where we used
\eqref{encadtheta} and
\eqref{expupa3} for the first inequality, and \eqref{lemma3.5} for the second one.
This latter ensures that $c$ can be chosen independently of $l$.
The expected value of $\tilde{\eta}^{(12)}$ can be bounded by using \eqref{encadtheta},
\eqref{expupa2} and \eqref{lemma3.5}
\begin{align}\label{approxtildeeta1}
 \E^{(1)}_{(n_A,l)}[\tilde{\eta}^{(12)}]
& \geq   (1-c\eps)(r_1+r_2)\sum_{k=l}^{\lfloor\eps K \rfloor -1}\frac{1}{k+1}\Big(\frac{1-(1-s)^{\lfloor \eps K \rfloor -k}-(1-s)^{k+1}}{s}-c\eps \Big)
\nonumber\\
&\geq(1-c\eps) \frac{r_1 +r_2}{s}  \log  \frac{\lfloor\eps K \rfloor}{l} - \frac{c}{\log K},
\end{align}
for a finite $c$ and $\eps$ small enough.
For the upper bound we get similarly,
\begin{equation}\label{approxtildeeta2}
 \E^{(1)}_{(n_A,l)}[\tilde{\eta}^{(12)}] \leq
(1+c\eps) \frac{r_1 +r_2}{s}  \log  \frac{\lfloor\eps K \rfloor}{l}.
\end{equation}
The last step consists in bounding the variance of $\tilde{\eta}^{(12)}$. As the calculation of this variance is quite involved,
we introduce an approximation of $\tilde{\eta}^{(12)}$, namely
\begin{eqnarray*} \tilde{\tilde{\eta}}^{(12)} :=
  \sum_{m=1}^{\infty} \mathbf{1}_{\{\tilde{N}_a(\tau_{m-1}^K)\geq \tilde{N}_a(0)\}}
 \mathbf{1}_{\{\tilde{N}_a(\tau_{m}^K)-\tilde{N}_a(\tau_{m-1}^K)=1\}}\frac{r_1+r_2}{\tilde{N}_a(\tau_{m-1}^K)+1}
= \sum_{k=\tilde{N}_a(0)}^{\lfloor\eps K \rfloor -1} \frac{r_1+r_2}{k+1}  U_{k}^K(1).
\end{eqnarray*}
Equation \eqref{rqpr} yields
$ (1-c\eps)\tilde{\tilde{\eta}}^{(12)}\leq {\tilde{\eta}}^{(12)}\leq \tilde{\tilde{\eta}}^{(12)}$
for a finite $c$ and $\eps$ small enough. Hence
\begin{multline}
\label{tildeeta}
\Big|{\var}^{(1)}_{(n_A,l)}\tilde{\eta}^{(12)}-{\var}^{(1)}_{(n_A,l)}\tilde{\tilde{\eta}}^{(12)}\Big|
  \leq  c \varepsilon \E^{(1)}_{(n_A,l)}\Big[\Big(\tilde{\tilde{\eta}}^{(12)} \Big)^2\Big] \\
 \leq  c \varepsilon(r_1+r_2)^2 \sum_{k,k'=l}^{\lfloor \eps K \rfloor-1}\frac{\E^{(1)}[(U_{k}^K(1))^2]+\E^{(1)}[(U_{k'}^K(1))^2]}{(k+1)(k'+1)}
\leq {c\eps} ,
\end{multline}
where we used \eqref{compaU'|Uj} and \eqref{minqk} which ensure that
 $U_{k}^K(1)$ is smaller than a geometric random variable with parameter $q_k^{(s_-(\eps),s_+(\eps))}  \geq s_-(\eps)$.
Thus it is enough to bound ${\var}^{(1)}_{(n_A,l)}\tilde{\tilde{\eta}}^{(12)}$. Thanks to \eqref{covUmlk} and
Condition \eqref{assrK} we get:
\begin{eqnarray*}\label{varetatildetilde}
 {\var}^{(1)}_{(n_A,l)}\tilde{\tilde{\eta}}^{(12)}&=&
 { (r_1+r_2)^2 }\sum_{k,k'=l}^{\lfloor \eps K \rfloor -1}\frac{\cov^{(1)}_{(n_A,l)}(U_{k}^K(1),U_{k'}^K(1))}{(k+1)(k'+1)} \nonumber\\
 & \leq & 2 (r_1+r_2)^2 \sum_{k\leq k'=l}^{\lfloor \eps K \rfloor -1}\frac{\lambda_\eps^{(k'-k)/2}+\eps}{(k+1)(k'+1)}
 \leq  c \frac{\log \lfloor \eps K \rfloor}{\log^2 K}(c+\eps \log \lfloor \eps K \rfloor).
\end{eqnarray*}
Recalling \eqref{tildeeta} and again Condition \eqref{assrK}, we finally obtain
\begin{equation}\label{vartildeta}
 \limsup_{K \to \infty} {\var}^{(1)}_{(n_A,l)}\tilde{{\eta}}^{(12)}\leq c \eps,
\end{equation}
for a finite $c$ independent of $l$ and $\eps$ small enough.
Applying \eqref{ineqX1X2}
with $X_1=\eta^{(12)}$ and $X_2=\tilde{\eta}^{(12)}$ yields
\begin{multline*}
\Big|\P^{(1)}_{(n_A,l)}(NR(i)^{(1)})-e^{-\frac{r_1+r_2}{s}\log \frac{\lfloor \eps K \rfloor}{l}} \Big|\leq
 \E^{(1)}_{(n_A,l)} \Big|\prod_{m=1}^\infty(1-\theta^{(12)} (m))-\exp( -\eta^{(12)})\Big|\\
+ \E^{(1)}_{(n_A,l)}[ \eta^{(12)} - \tilde{\eta}^{(12)}]+\sqrt{{\var}^{(1)}_{(n_A,l)}\tilde{{\eta}}^{(12)}}+
\Big|\E^{(1)}_{(n_A,l)}[\tilde{\eta}^{(12)}] - \frac{r_1 +r_2}{s}  \log  \frac{\lfloor\eps K \rfloor}{l}\Big|.
\end{multline*}
We end the proof of Equation \eqref{lemPNRml} with Inequalities \eqref{diffcondexp}, \eqref{diffX1X2}, \eqref{vartildeta}, \eqref{approxtildeeta1} and
\eqref{approxtildeeta2}.\\

\noindent \textit{Proof of \eqref{prob:ai1_m'}:} There is a supplementary difficulty due to the randomness of $\tilde{N}_a(\tau^K_{R(i,2)})$.
In the previous case we were interested in
an event before the first hitting of $\lfloor \eps K \rfloor$, while in the current case, the conditioning on the value of
$\tilde{N}_a(\tau^K_{R(i,2)})$ does not tell us how many times $\tilde{N}_a$ has hit this value before.
This is why we have introduced $NR(l,\sigma,i)$ and $NR(l,\zeta,i)$ in \eqref{NRlsigmai}.
Define for $m\geq 1$,
$$ \theta^{(1)} (m):=
\mathbf{1}_{\{\tau_m^K\leq \tilde{T}_\eps^K\}}\mathbf{1}_{ \{\tilde{N}_a(\tau_{m}^{K})-\tilde{N}_a(\tau_{m-1}^{K})=1\}}p_{aA}^{(1)}
((\tilde{N}_A,\tilde{N}_a)(\tau_{m}^{K})).  $$
We again condition on the trait population process and get for $n_A \in I_\eps^K ±1$ and $k\leq l<\lfloor \eps K \rfloor$,
\begin{equation}\label{lowupbound_P}\P^{(1)}_{(n_A,k)}(NR(l,\sigma,i)|\mathcal{F})
=\prod_{m=1}^{\sigma_{l}^{K}(1)}(1-\theta^{(1)} (m)), \quad \P^{(1)}_{(n_A,k)}-\text{a.s.},\end{equation}
and the same expression with $\sigma$ replacing $\zeta$. We define the corresponding parameters for the Poisson approximation as follows:
\begin{align*}
\eta^{(1),-}_{l} := \sum_{m=1}^{\sigma_{l}^K(1)} \theta^{(1)} (m) ,\ \text{ and } \
 \eta^{(1),+}_{l} := \sum_{m=1}^{\zeta_l^K(1)} \theta^{(1)} (m).
\end{align*}
They will play the role of $X_1$ in \eqref{ineqX1X2}.
We will show that both can be approximated by:
\begin{align}\label{defetatilde1}
 \tilde{\eta}^{(1)}_{l} := \sum_{m=1}^{\zeta_l^K(1)} \theta^{(1)} (m)
 \mathbf{1}_{\{\tilde{N}_a(0)\leq \tilde{N}_a(\tau_{m}^{K}) \leq l\}},
\end{align}
which will play the role of $X_2$ in \eqref{ineqX1X2}.
Recall Definitions \eqref{Umjk}, \eqref{Dk} and \eqref{defjumpexc}. On the one hand, for $n_A \in I_\eps^K±1$ and $k<\lfloor \eps K \rfloor$,
\begin{align}\label{approx+}
\E^{(1)}_{(n_A,k)}[\eta^{(1),+}_{l}-\tilde{\eta}^{(1)}_{l} ] &= \E^{(1)}_{(n_A,k)} \Big[\sum_{m=1}^{\zeta_l^K(1)}
\theta^{(1)} (m) (\mathbf{1}_{\{ N_a^K(\tau_{m}^{K}) <k\}} +  \mathbf{1}_{\{ N_a^K(\tau_{m}^{K}) > l\}})\Big]\nonumber\\
&\leq \E^{(1)}[D^{K}_{k}(1)] \sum_{j=1}^{k-1} \sup_{n_A \in I_\eps^K}\hspace{.1cm}p_{aA}^{(1)}(n_A,j)
\sup_{n_A \in I_\eps^K±1}\hspace{.1cm}\E^{(1)}_{(n_A,k-1)}[U_{n_A,k,j}^K(1)]\nonumber\\
&+ \E^{(1)}[U^{K}_{l}(1)] \sum_{j=l+1}^{ \lfloor \eps K \rfloor} \sup_{n_A \in I_\eps^K} \hspace{.1cm} p_{aA}^{(1)}(n_A,j)
 \sup_{n_A \in I_\eps^K±1}\hspace{.1cm}\E^{(1)}_{(n_A,l+1)}[U_{n_A,l,j}^K(1)|\sigma_l^K(1)<\infty],
\end{align}
where we used that in the first phase, under $\P^{(1)}$, the number of excursions below $k$ (resp. above $l$) is equal to $D_k^K(1)$
(resp. $U_l^K(1)-1$). Applying Inequality \eqref{expupa2}, Lemma \ref{excunder}, and Equation \eqref{rqpr},
we get the existence of a finite $c$ such that for $\eps$ small enough:
\begin{equation*}
 \E^{(1)}_{(n_A,k)}[\eta^{(1),+}_{l}-\tilde{\eta}^{(1)}_{l} ] \leq
 cr_1 \sum_{j=1}^{ \lfloor \eps K \rfloor} \frac{\mu_\eps^{|j-l|}}{j+1} \leq  \frac{c}{\log K},
\end{equation*}
as $\mu_\eps\in(0,1)$ for $\eps$ small enough and by Condition \eqref{assrK}.
On the other hand, by using the same results as in \eqref{approx+}, we get
\begin{eqnarray*}
\E^{(1)}_{(n_A,k)}[| \eta^{(1),-}_{l}-\tilde{\eta}^{(1)}_{l} | ]
&\leq & \E^{(1)}_{(n_A,k)} \Big[\sum_{m=1}^{\sigma_{l}^K(1)} \theta^{(1)} (m) \mathbf{1}_{\{\tilde{N}_a(\tau_{m}^{K})<k\}}
+ \sum_{m=\sigma_{l}^{K}(1)+1}^{\zeta_l^K(1)} \theta^{(1)} (m)\mathbf{1}_{\{k\leq \tilde{N}_a(\tau_{m}^{K}) \leq l\}} \Big]\\
 &\leq & cr_1 \Big(\sum_{j=1}^{k-1}\frac{\mu_\eps^{k-j}}{j+1} +\sum_{j=k}^{l-1} \frac{\mu_\eps^{l-j}}{j+1} \Big)
 \leq \frac{c}{\log K}.\end{eqnarray*}
This shows that it is sufficient to use
$\tilde{\eta}^{(1)}_{l}$ for the Poisson approximation. From \eqref{diffcondexp} we deduce that
this approximation holds true up to terms of order $1/\log^{2} K$. Recalling once again \eqref{ineqX1X2}, we see that it only
remains to calculate the expected value of $\tilde{\eta}^{(1)}_{l}$ and to bound its variance.
The expectation can be approximated in the same way as the expected value of $\tilde{\eta}_{l}^{(12)}$ from
the previous part in \eqref{approxtildeeta1} and \eqref{approxtildeeta2}:
\begin{align}
(1-c\eps) \frac{r_1}{s} \log \frac{l-1}{k} - \frac{c}{\log K}\leq
 \E^{(1)}_{(n_A,k)}[\tilde{\eta}^{(1)}_{l}] \leq(1+c\eps) \frac{r_1}{s} \log \frac{l-1}{k}.
\end{align}
A comparison of the definitions of $\tilde{\eta}^{(1)}_{l}$ in \eqref{defetatilde1} and $\tilde{\eta}^{(12)}$
in \eqref{defetatilde12} shows that the variance of $\tilde{\eta}^{(1)}_{l}$ can be bounded by the same expression, that is,
a constant times $\eps$. This ends the proof of Equation \eqref{prob:ai1_m'}.\\

\noindent \textit{Proof of Equation \eqref{approxhatPRAl}}
It can be done in a similar way as for Equations \eqref{lemPNRml} and \eqref{prob:ai1_m'}. We have the following
lower and upper bounds:
\begin{multline}
\label{lowupbound_P_AA}
 \prod_{m=1}^{\zeta_l^K(1)} \Big[ 1- p_{AA}^{(2)}(\tilde{N}_A,\tilde{N}_a)(\tau_m^K) \Big]
\leq
1-\P^{(1)}(RA(l,i)|\mathcal{F})
 \leq \prod_{m=1}^{\sigma_l^K(1)} \Big[ 1- p_{AA}^{(2)}(\tilde{N}_A,\tilde{N}_a)(\tau_m^K) \Big].
\end{multline}
Once again we aim at deriving a Poisson approximation. As a birth event in the $A$-population is needed
to see a recombination within the $A$-population,
bounds on the expected number of jumps will concern the process $\tilde{N}_A$ and we have to use Lemma \ref{lemespvarmathcalU}.
\end{proof}

\section{Technical results}

This section is dedicated to technical results needed in the proofs.
First we recall a well known result on the hitting times of birth and death processes which can be found in \cite{schweinsberg2005random} Lemma 3.1:

\begin{pro}
Let $Z=(Z_t)_{t \geq 0}$ be a birth and death process with individual birth and death rates $b$ and $d $. For $i \in \Z^+$,
$T_i=\inf\{ t\geq 0, Z_t=i \}$ and $\P_i$ is the law of $Z$ when $Z_0=i$. Then
for $(i,j,k) \in \Z_+^3$ such that $j \in (i,k)$,
\begin{equation} \label{hitting_times} \P_j(T_k<T_i)=\frac{1-(d/b)^{j-i}}{1-(d/b)^{k-i}} .\end{equation}
\end{pro}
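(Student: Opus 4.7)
The plan is to reduce the problem to the discrete-time embedded jump chain and then solve a standard linear recurrence (gambler's ruin).

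First, I would observe that the event $\{T_k < T_i\}$ depends only on the sequence of states visited by $Z$, not on the holding times. So I may replace $Z$ by its jump chain. At any state $m \geq 1$, the total birth rate is $bm$ and the total death rate is $dm$, so the jump chain is a nearest-neighbour random walk on $\Z_+$ with up-probability $p := b/(b+d)$ and down-probability $q := d/(b+d)$, independent of the current state. In particular, on any finite interval $[i,k]$, the walk almost surely exits, so the stopping time $T_i \wedge T_k$ is a.s.\ finite under $\P_j$.

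Second, I would set $h(j) := \P_j(T_k < T_i)$ for $i \leq j \leq k$, with boundary conditions $h(i)=0$ and $h(k)=1$. By the Markov property applied at the first jump, $h$ is harmonic for the jump chain on the interior:
\begin{equation*}
h(j) = p\, h(j+1) + q\, h(j-1), \qquad i < j < k.
\end{equation*}
This is a linear recurrence with characteristic polynomial $p x^2 - x + q = 0$, whose roots are $1$ and $q/p = d/b$. Writing the general solution as $h(j) = A + B(d/b)^j$ and applying the two boundary conditions yields
\begin{equation*}
h(j) = \frac{(d/b)^{j} - (d/b)^{i}}{(d/b)^{k} - (d/b)^{i}} = \frac{1 - (d/b)^{j-i}}{1 - (d/b)^{k-i}},
\end{equation*}
which is the stated formula. (If $b=d$, one works with the second root, obtaining the linear interpolation $(j-i)/(k-i)$, which is the limit of the above expression as $d/b \to 1$.)

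There is no real obstacle here: the only point one must be a bit careful about is justifying a.s.\ exit of $[i,k]$ so that $h$ is well-defined, and the uniqueness of bounded harmonic functions with the prescribed boundary values, both of which are standard for a random walk on a finite interval. Everything else is routine algebra on a two-term linear recurrence.
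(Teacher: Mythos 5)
Your proof is correct. The paper offers no proof of its own here---it simply recalls the statement with a citation to Lemma 3.1 of Schweinsberg and Durrett---and your argument (pass to the embedded jump chain, note that the individual rates $b$ and $d$ make the up/down probabilities $b/(b+d)$ and $d/(b+d)$ state-independent, then solve the gambler's-ruin recurrence with the two boundary conditions) is precisely the standard derivation that the citation stands in for, including the correct handling of the degenerate case $b=d$ and the a.s.\ exit from the finite interval.
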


We also recall Lemma 3.5 in \cite{schweinsberg2005random}
and the first part of Equation (A.16) in \cite{smadi2014eco} which are used several times:

\begin{lem}  \
\begin{enumerate}
 \item[$\bullet$] If $a>1$ there is a $C$ such that for every $N \in \N$,
\begin{equation} \label{lemma3.5}\sum_{j=1}^N  \frac{a^j}{j}\leq \frac{Ca^N}{N}.\end{equation}
 \item[$\bullet$] Recall Definition \eqref{defqks1s2}. Then for $(s_1, s_2) \in (0,1)^2$ and $k< \lfloor \eps K \rfloor$,
\begin{equation} \label{minqk} q^{(s_1\wedge s_2,s_1 \vee s_2)}_k\geq s_1 \wedge s_2. \end{equation}
\end{enumerate}
\end{lem}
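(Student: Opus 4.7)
The final lemma gathers two independent technical facts. Part (a) is a purely analytic estimate for a partial sum of a geometric-times-harmonic series; part (b) is an inequality about first-passage probabilities of the biased random walk $\tilde Z^{(s)}$ defined in \eqref{defPronde}. My plan is to prove (a) by pulling out the dominant last term of the sum and controlling the remainder by a geometric tail, and to prove (b) by plugging the explicit hitting-probability formula \eqref{hitting_times} into the definition \eqref{defqks1s2} and reducing the claim to an elementary monomial inequality.

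For (a): write $S_N=\sum_{j=1}^N a^j/j$ and substitute $i=N-j$ to obtain
$$S_N=\frac{a^N}{N}\sum_{i=0}^{N-1}\frac{N}{N-i}\,a^{-i}.$$
It suffices to bound the sum on the right by a constant depending only on $a$. Split it at $i=\lfloor N/2\rfloor$. On the head $0\le i\le N/2$ one has $N/(N-i)\le 2$, so this part is at most $2\sum_{i\ge 0}a^{-i}=2a/(a-1)$ because $a>1$. On the tail $N/2<i\le N-1$, write $m=N-i$: the weights $N/m$ sum to $\sum_{m=1}^{\lfloor N/2\rfloor}N/m=O(N\log N)$, while $a^{-i}\le a^{-N/2}$, so this contribution is of order $N\log N\cdot a^{-N/2}$, which is bounded uniformly in $N$ (with a lot of room, since $a^{-N/2}$ decays super-polynomially). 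Adjusting the constant for the finitely many small $N$ where the asymptotic bound is not yet effective completes the argument.

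For (b): set $s_-=s_1\wedge s_2$, $s_+=s_1\vee s_2$, $x=1-s_-$, $y=1-s_+$, so $0<y\le x<1$. Applying \eqref{hitting_times} to $\tilde Z^{(s)}$ (whose embedded down-to-up probability ratio equals $1-s$), the numerator of $q_k^{(s_-,s_+)}$ is
$$\mathcal{P}_{k+1}^{(s_-)}(\tau_{\eps K}<\tau_k)=\frac{s_-}{1-x^{\eps K-k}},$$
and the denominator is
$$\mathcal{P}_{k+1}^{(s_+)}(\tau_{\eps K}<\tau_0)=\frac{1-y^{k+1}}{1-y^{\eps K}}.$$
Hence
$$q_k^{(s_-,s_+)}=\frac{s_-\,(1-y^{\eps K})}{(1-x^{\eps K-k})(1-y^{k+1})},$$
and the desired lower bound $q_k^{(s_-,s_+)}\ge s_-$ amounts to $1-y^{\eps K}\ge (1-x^{\eps K-k})(1-y^{k+1})$. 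Since $y\le x$ forces $1-x^{\eps K-k}\le 1-y^{\eps K-k}$, it suffices to establish $1-y^{\eps K}\ge (1-y^{\eps K-k})(1-y^{k+1})$. Expanding the right-hand side this reduces to the elementary inequality $y^{\eps K-k}+y^{k+1}\ge y^{\eps K}+y^{\eps K+1}$, which follows termwise from $\eps K-k\le \eps K$ and (using $k<\lfloor \eps K\rfloor$) $k+1\le \eps K$, together with $y\in(0,1)$.

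Neither part presents a real obstacle: the whole lemma is bookkeeping around two one-line observations. The only delicate point is keeping the direction of the monotonicity straight in (b), where $x=1-s_-$ appears under a minus sign inside a $1-(\cdot)^{\eps K-k}$ in the denominator of $q_k^{(s_-,s_+)}$; one has to verify that replacing $x$ by $y$ (i.e. moving in the direction $y\le x$) \emph{decreases} the denominator and therefore \emph{increases} the quotient, which is exactly what is needed for the lower bound.
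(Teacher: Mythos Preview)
Your proof is correct in both parts. The paper itself does not prove this lemma at all: it simply cites Lemma~3.5 of \cite{schweinsberg2005random} for \eqref{lemma3.5} and Equation~(A.16) of \cite{smadi2014eco} for \eqref{minqk}, so you are supplying an argument where the authors chose not to. Your approach to (a) via the change of index $i=N-j$ and a head/tail split is clean and standard; for (b), your reduction via \eqref{hitting_times} to the monomial inequality $y^{\lfloor\eps K\rfloor-k}+y^{k+1}\ge y^{\lfloor\eps K\rfloor}+y^{\lfloor\eps K\rfloor+1}$ is exactly the right elementary computation, and your handling of the direction of monotonicity when passing from $x=1-s_-$ to $y=1-s_+$ is correct. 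One cosmetic remark: throughout (b) you write $\eps K$ where the paper means $\lfloor\eps K\rfloor$ (cf.\ \eqref{deftauas}); this does not affect the argument but you may want to make it explicit.
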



Finally, we state two technical results. The first one can be proven by using characteristic functions, the proof of the second Lemma is given below:

\begin{lem}\label{lemgeom}
Let $V$ be a geometric random variable with parameter $p_1$ and $(G^i, i \in \N)$ a sequence of independent geometric random variables with parameter $p_2$, independent of $V$.
Then the random variable:
$$ Z:= \underset{i \leq V}{\sum}G^i $$
is geometrically distributed with parameter $p_1p_2$.
\end{lem}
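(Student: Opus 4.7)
The plan is to compute the probability generating function (PGF) of $Z$ by conditioning on $V$, and then to recognize the result as the PGF of a geometric random variable with parameter $p_1p_2$. Since the distribution of a random variable with values in $\N$ is determined by its PGF on $[0,1]$, this will conclude the proof. (Characteristic functions work identically; PGFs are slightly more natural here since everything is integer-valued.)

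\textbf{Step 1: PGFs of the building blocks.} Using the paper's convention that a geometric random variable $W$ with parameter $p\in(0,1)$ satisfies $\P(W=k)=(1-p)^{k-1}p$ for $k\geq 1$, a direct summation of the geometric series yields
\[
\phi_W(s):=\E[s^W]=\sum_{k\geq 1}s^k(1-p)^{k-1}p=\frac{ps}{1-(1-p)s},\qquad s\in[0,1].
\]
Denote $\phi_V$ and $\phi_G$ the PGFs associated with parameters $p_1$ and $p_2$, respectively.

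\textbf{Step 2: PGF of the random sum.} By the independence of $V$ and $(G^i)_{i\in\N}$ and the i.i.d.\ structure of $(G^i)$,
\[
\E[s^Z]=\sum_{k\geq 1}\P(V=k)\,\E\Big[s^{\sum_{i=1}^{k}G^i}\Big]=\sum_{k\geq 1}\P(V=k)\,\phi_G(s)^k=\phi_V(\phi_G(s)).
\]
Substituting the expressions from Step~1 and clearing the inner denominator,
\[
\phi_V(\phi_G(s))=\frac{p_1\phi_G(s)}{1-(1-p_1)\phi_G(s)}
=\frac{p_1p_2 s}{1-(1-p_2)s-(1-p_1)p_2 s}
=\frac{p_1p_2\,s}{1-(1-p_1p_2)s}.
\]
This is exactly $\phi_{W'}(s)$ for a geometric random variable $W'$ with parameter $p_1p_2$.

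\textbf{Step 3: Conclusion.} Since $\phi_Z$ and $\phi_{W'}$ coincide on $[0,1]$ and the PGF determines the law of an $\N$-valued random variable, $Z$ is geometrically distributed with parameter $p_1p_2$.

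\textbf{Main obstacle.} There is essentially no obstacle beyond bookkeeping; the only point that requires care is consistency with the paper's convention on the support of geometric variables (starting at $1$, as used in \eqref{tildemajV2}). Under the alternative convention (support starting at $0$), the same composition identity $\phi_V\circ\phi_G$ still simplifies to the PGF of a geometric with parameter $p_1p_2$, so the statement is robust.
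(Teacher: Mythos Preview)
Your proof is correct and follows essentially the same approach as the paper, which merely states that the lemma ``can be proven by using characteristic functions'' without giving details. Your use of probability generating functions is just the real-variable analogue of that suggestion, and the composition identity $\phi_Z=\phi_V\circ\phi_G$ together with the explicit simplification is exactly the intended argument.
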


\begin{lem}\label{equivalent}
 Let $(c_N, N \in \N)$ be a bounded sequence of $\R$.
 Then there exists a finite constant $c$ such that
$$\limsup_{N \to \infty}\sup_{k \leq N}\Big|
\sum_{l=1}^{k-1}\frac{e^{\frac{c_N }{\log N}\log l}}{l+1}-\frac{\log N}{c_N} (e^{\frac{c_N }{\log N}\log k}-1) \Big| \leq c.$$
\end{lem}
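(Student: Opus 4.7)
The plan is to interpret the target quantity as the difference between a discrete sum and the Riemann integral it approximates. Setting $\alpha_N := c_N/\log N$, the boundedness of $(c_N)$ forces $\alpha_N \to 0$, and in particular $|\alpha_N|\leq 1/2$ for $N$ large enough. The right-hand term rewrites as
\[
\frac{\log N}{c_N}\bigl(e^{\alpha_N\log k}-1\bigr) \;=\; \frac{k^{\alpha_N}-1}{\alpha_N} \;=\; \int_1^k x^{\alpha_N-1}\,dx \;=:\; I_N(k),
\]
so the lemma boils down to a uniform bound on $\bigl|\sum_{l=1}^{k-1} l^{\alpha_N}/(l+1) - I_N(k)\bigr|$, with the understanding that everything extends continuously through $\alpha_N=0$ (whose limit is $\log k$).

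The first step is the elementary identity $l^{\alpha_N}/(l+1) = l^{\alpha_N-1} - l^{\alpha_N-1}/(l+1)$, which cleanly separates a leading Riemann sum from an absolutely summable tail. For the leading piece, I would exploit that $x\mapsto x^{\alpha_N-1}$ is decreasing on $[1,\infty)$ whenever $\alpha_N<1$, so the standard integral/Riemann sandwich gives
\[
I_N(k) \;=\; \sum_{l=1}^{k-1}\int_l^{l+1} x^{\alpha_N-1}\,dx \;\leq\; \sum_{l=1}^{k-1} l^{\alpha_N-1} \;\leq\; 1 + \int_1^{k-1} x^{\alpha_N-1}\,dx \;\leq\; 1 + I_N(k),
\]
where I have used $1^{\alpha_N-1}=1$ for the isolated first term of the upper sum and the monotonicity $I_N(k-1)\leq I_N(k)$ (which holds for both signs of $\alpha_N$). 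This furnishes a deterministic error of at most $1$, independent of $k$ and $N$.

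For the tail term $\sum_{l=1}^{k-1} l^{\alpha_N-1}/(l+1)$, the bound $|\alpha_N|\leq 1/2$ yields $l^{\alpha_N-1}/(l+1)\leq l^{-3/2}$ for every $l\geq 1$, so the tail is dominated by $\zeta(3/2)<\infty$ uniformly in $N$ and $k$. Combining the two estimates by the triangle inequality then delivers the constant $c$ claimed in the statement.

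The only point demanding care is uniformity when $c_N$ approaches zero along a subsequence, where the right-hand side of the lemma becomes an indeterminate $0/0$. All the bounds above remain valid in that regime, because they are derived from the integral representation $I_N(k)=\int_1^k x^{\alpha_N-1}dx$, which interpolates continuously through $\alpha_N=0$ (its limit being $\log k$); no step in the argument depends on $\alpha_N$ being bounded away from zero.
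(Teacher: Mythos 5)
Your argument is correct. Both you and the paper start from the same key observation, namely that the closed-form term is exactly the integral $I_N(k)=\int_1^k x^{c_N/\log N-1}\,dx$, so the whole issue is to control the sum-versus-integral discrepancy uniformly in $k\leq N$. Where you diverge is in how that discrepancy is controlled. The paper works termwise: it computes $\int_l^{l+1}x^{\alpha_N-1}\,dx-\frac{l^{\alpha_N}}{l+1}$ for each $l$ and shows, via a second-order Taylor--Lagrange expansion of $(1+1/l)^{\alpha_N}$, that each term is $O(1/l^2)$ with a constant uniform in $N$ (using only the boundedness of $c_N$), then sums the errors. You instead use the exact algebraic identity $\frac{l^{\alpha_N}}{l+1}=l^{\alpha_N-1}-\frac{l^{\alpha_N-1}}{l+1}$, bound the resulting pure Riemann sum against $I_N(k)$ by monotonicity of $x\mapsto x^{\alpha_N-1}$ (error at most $1$), and dominate the correction term by $\zeta(3/2)$ once $|\alpha_N|\leq 1/2$, which holds for all large $N$ since $\alpha_N=c_N/\log N\to 0$. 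Your route avoids any Taylor expansion and is slightly more elementary; the paper's termwise estimate is marginally more quantitative in that it pins the $l$-th error at $O(1/l^2)$ rather than packaging everything into a single summable majorant. Both treatments handle the degenerate case $c_N=0$ identically, through the integral representation, and your closing remark that none of the bounds require $\alpha_N$ to be bounded away from zero is exactly the right point to make there.
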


\begin{proof}
We prove the Lemma for a sequence $(c_N, N \in \N)$ in $\R^*$ and extend the result by using the convention
$$ \Big( \frac{\log N}{c_N} (e^{\frac{c_N }{\log N}\log k}-1)\Big)_{|c_N=0}=\log k.$$
 The idea is to compare the sum with the integral
$$ \int_1^k x^{\frac{c_N}{\log N}-1}dx= \frac{\log N}{c_N} (e^{\frac{c_N }{\log N}\log k}-1).$$
Let $l$ be in $\{1,...,N-1\}$. Then we have
\begin{eqnarray*}
 \int_l^{l+1} x^{\frac{c_N}{\log N}-1}dx -\frac{l^{\frac{c_N }{\log N}}}{l+1} &=& \frac{\log N}{c_N} \Big((l+1)^{\frac{c_N }{\log N}}-l^{\frac{c_N }{\log N}}  -\frac{c_N}{\log N}\frac{l^{\frac{c_N }{\log N}}}{l+1} \Big)\\
&=& \frac{\log N}{c_N}l^{\frac{c_N }{\log N}} \Big(\Big(1+\frac{1}{l}\Big)^{\frac{c_N }{\log N}}-1  -\frac{c_N}{(l+1)\log N}\Big).
\end{eqnarray*}
An application of the Taylor-Lagrange formula yields that
$$ \Big(1+\frac{1}{l}\Big)^{\frac{c_N }{\log N}}-1=\frac{c_N}{l\log N}+\frac{c_N}{\log N}\Big(\frac{c_N}{\log N}-1 \Big) \frac{1}{2l^2}\Big(1+x\Big)^{\frac{c_N }{\log N}-2} $$
where $x$ belongs to $[0,1/l]$.
As the sequence $(c_N, N \in \N)$ is bounded, we deduce that there exists a finite constant $c$ such that
$$  \Big|\int_l^{l+1} x^{\frac{c_N}{\log N}-1}dx -\frac{l^{\frac{c_N }{\log N}}}{l+1}\Big|\leq \frac{c}{l^2}. $$
This ends up the proof of Lemma \ref{equivalent}.
\end{proof}

{\bf Acknowledgements:} {\sl The authors would like to thank Jean-François Delmas, Sylvie M\'el\'eard and Anja Sturm for their careful
reading of this paper. 
They also want to thank an anonymous reviewer for several suggestions and improvements.
This work  was partially funded by project MANEGE `Mod\`eles
Al\'eatoires en \'Ecologie, G\'en\'etique et \'Evolution' of the French national
research agency ANR-09-BLAN-0215, Chair Mod\'elisation Math\'ematique et Biodiversit\'e Veolia Environnement-
Ecole Polytechnique-Museum National d'Histoire Naturelle-Fondation X
and the French national research agency ANR-11-BSV7-013-03,
the DFG through SPP priority programme 1590 and the RTG 1644, `Scaling Problems in Statistics'.}

\bibliographystyle{abbrv}
\bibliography{biblio_hitch}

\end{document}